\newtheorem{Th}{Theorem}[section] 
\newtheorem{Lem}[Th]{Lemma} 
\newtheorem{Prop}[Th]{Proposition} 
\newtheorem{Def}[Th]{Definition} 
\newtheorem{Prob}[Th]{Problem}
\def\N{{\mathbb N}}
\def\R{{\mathbb R}}
\def\argmin{\mathop{\operator@font argmin}}
\begin{document}







\title{Particle dynamics subject to impenetrable boundaries: existence and uniqueness of mild solutions}

\author{M.~Kimura \and P.~van Meurs \and Z.X.~Yang}

\date{}

\maketitle
\tableofcontents

\begin{abstract}
We consider the dynamics of particle systems where the particles are confined by impenetrable barriers to a bounded, possibly non-convex domain $\Omega$. When particles hit the boundary, we consider an instant change in velocity, which turns the systems describing the particle dynamics into an ODE with discontinuous right-hand side. Other than the typical approach to analyse such a system by using weak solutions to ODEs with multi-valued right-hand sides (i.e., applying the theory introduced by Filippov in 1988), we establish the existence of mild solutions instead. This solution concept is easier to work with than weak solutions; e.g., proving uniqueness of mild solutions is straight-forward, and mild solutions provide a solid structure for proving many-particle limits.

We supplement our theory of mild solutions with an application to gradient flows of interacting particle energies with a singular interaction potential, and illustrate its features by means of numerical simulations on various choices for the (non-convex) domain $\Omega$.
\end{abstract}

\section{Introduction}\label{sec1}
\setcounter{equation}{0}

In various case studies in the field of interacting particle systems, in particular SPH \cite{gingold1977smoothed,li2007meshfree,liu2010smoothed,lucy1977numerical,monaghan1992smoothed,asai2012stabilized}, pedestrian dynamics \cite{MauryVenel11,bellomo2013microscale,daamen2010capacity,helbing2003lattice,liao2014experimental} and granular media \cite{Stewart98,Maury06}, the particles are confined to a bounded, non-convex domain $\Omega \subset \R^m$ (see Figure \ref{fig:intro}). While there are various ways to implement such impenetrable boundaries in numerical schemes for computing particle dynamics, a satisfactory analytical framework seems to be largely missing. Such a missing structure makes, for example, any statement on many-particle limits out of reach. Our aim is therefore to set up an analytical framework for interacting particle systems which can handle impenetrable barriers, to define a satisfactory notion of a solution for the particle trajectories, and to prove existence and uniqueness of such solutions.
\begin{figure}[htbp]
  \begin{center}
    \includegraphics[width=80mm]{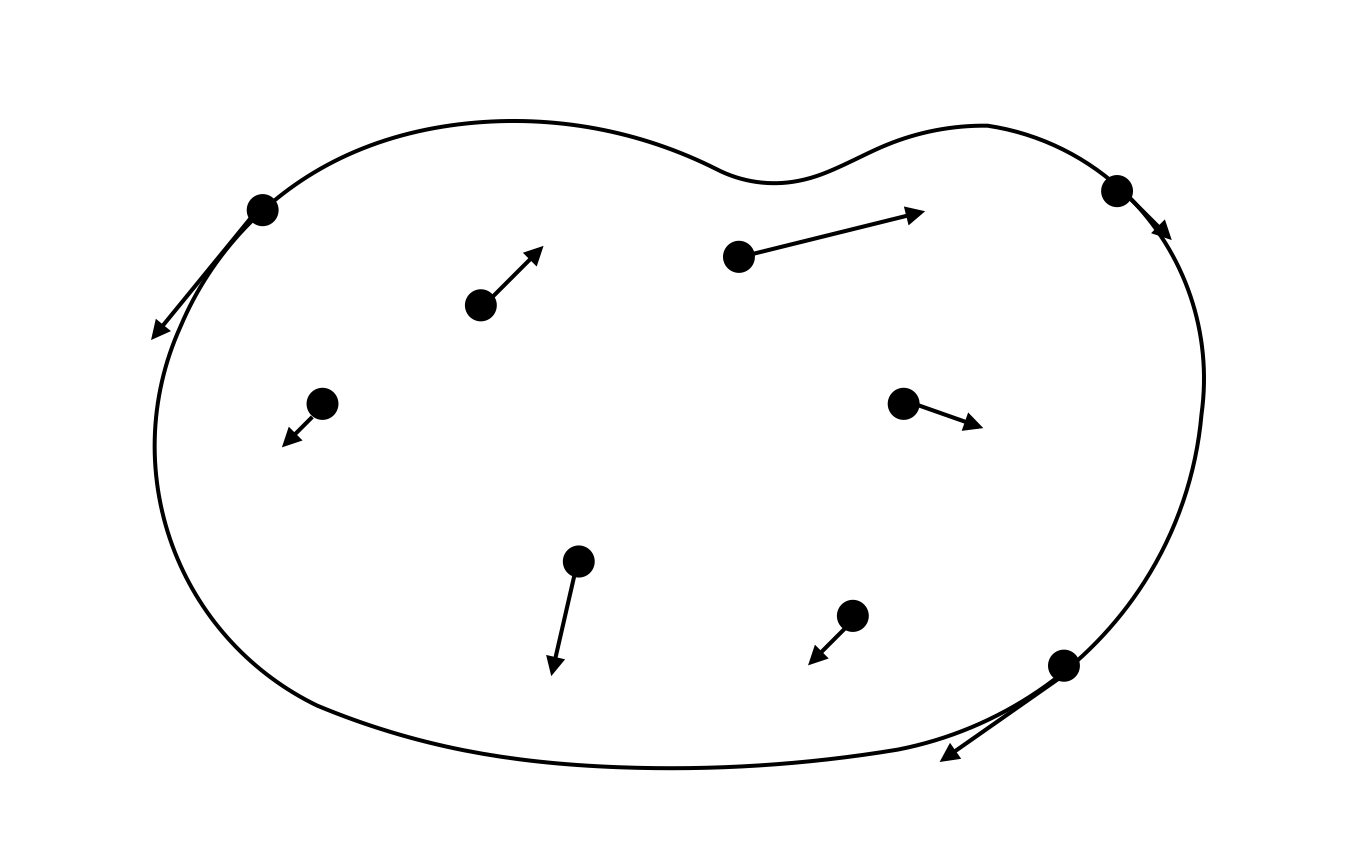}
  \end{center}
  \caption{Particles $x_i$ confined in a domain $\Omega$.}
  \label{fig:intro}
\end{figure}

In this paper, we focus on first-order in time particle systems. To motivate formally and state the related system of ODEs, we first consider the motion of a point mass $x(t)$ with mass $m_0 > 0$
confined to $\Omega$.
We subject the point mass to a given force $f(t)\in\R^m$ and impose a
viscous resistance represented by a linear drag force
$-\mu\dot{x}$ with
a positive coefficient $\mu >0$.
Then, using Newton's second law, the equation of motion for the point mass is
\[
m_0\ddot{x}(t) + \mu \dot{x}(t) = f(t)
\]
whenever $x(t)\in \Omega$. Next we assume that $m_0/\mu \ll 1$, and consider the overdamped limit.
Setting $m_0=0$ and $F(t):=f(t)/\mu$ (we keep on calling it a ``force'' as 
in the above context), we obtain for the dynamics of the point mass that $\dot{x}(t) = F(t)$. Finally we motivate and include the effect of the confining domain $\Omega$. When $x(t)\in \partial \Omega$,
from Newton's third law, a counter force $-(F\cdot\nu)_+\,\nu$ acts on the particle from $\partial \Omega$,
where $\nu$ is the outward normal vector on $\partial \Omega$
and $(a)_+:=\max (a,0)$. This leads to
the following ODE with a ``one-sided projection'':
\[
\dot{x}(t)=\left\{
\begin{array}{ll}
F(t)&\mbox{if $x(t)\in\Omega$,}\\[5pt]
F(t)-(F(t)\cdot\nu)_+\,\nu&
\mbox{if $x(t)\in\partial \Omega$.}
\end{array}
\right.
\]

Using the overdamped dynamics of the point mass, we introduce the dynamics of the particle system. Denoting the positions of $n$ particles by $X(t) = \{x_i(t)\}_{t=1}^n \in \overline \Omega^n$, we consider a given force $F_i \big(t,X(t)\big)$ which acts on particle $i$ for each $i = 1,\ldots,n$. With the confinement described above, this yields the following set of ODEs:
\begin{equation} \label{f1}
\dot{x}_i(t) = \left\{\begin{aligned}
  &F_i \big(t,X(t)\big) & x_i(t) &\in \Omega,\\
  &F_i \big(t,X(t)\big) - \big( F_i \big(t,X(t)\big) \cdot \nu(x_i(t)) \big)_+\,\nu(x_i(t))
  & x_i(t) &\in \partial \Omega.
  \end{aligned}
\right.
\end{equation}
Because of the one-sided projection of the force at the boundary, the trajectories of particles typically have a discontinuity in their velocity at the time when they hit the boundary. Since classical $C^1$-solutions to \eqref{f1} cannot describe such behavior, we cannot rely on classical ODE theory such as the Cauchy-Peano Theorem and the Picard-Lindel$\ddot{o}$f (Cauchy-Lipschitz) Theorem \cite{hartmanordinary} for establishing the existence and uniqueness of solutions to \eqref{f1}. \medskip
 
Next we briefly review the literature on the existence and uniqueness of solutions to systems with a discontinuous right-hand side such as \eqref{f1}. When $\Omega$ is convex and $F_i$ is the gradient of a $\lambda$-convex energy with respect to $x_i$, then gradient flow theory \cite{AmbrosioGigliSavare08} provides a sufficiently strong analytical framework to describe \eqref{f1}. In the general case ($\Omega$ not convex or a more general discontinuous right-hand side than the one in \eqref{f1}), the book by Filippov [Fil88] treats such discontinuities by making the right-hand side \emph{multi-valued}. For the related differential \emph{inclusion}, the existence and uniqueness of weak solutions is sought. Especially the conditions for the right-hand side to result in uniqueness of solutions can be difficult to check, and need to be considered differently on a case-to-case basis. Moreover, weak solutions to differential inclusions seem to lend themselves poorly for the purpose of studying many-particle limits.

As opposed to the general setting in [Fil88], \eqref{f1} has a particular type of discontinuity in the right-hand side which, under appropriate regularity conditions of $F_i$ and $\Omega$, allows for a stronger sense of solutions. Indeed, our main result (Theorem \ref{theorem1}) states that for enough regular $F_i$ and $\Omega$, \eqref{f1} attains a unique, \emph{mild} solution, without the need to transform \eqref{f1} into a differential inclusion. In addition, we show that when particles detach from the boundary, their velocity is tangent to $\partial \Omega$ (Propsition~\ref{property}). \medskip

Regarding the regularity condition on $F$, we require $F$ to be continuous for the existence of mild solutions. However, in many applications of interacting particle systems, $F$ blows up whenever two particles tend to the same position. To cover such scenarios, we extend Theorem \ref{theorem1} to the case in which $F$ is the gradient of an energy which blows up to $+\infty$ as any two particles tend to the same position (as is the case, for instance, for charged particles). This extension is our second main result, which is stated in Theorem \ref{Th3}. 

The intended purpose of Theorem \ref{Th3} is to be a stepping stone for proving the limit passage $n \to \infty$ (i.e., the many-particle limit) from \eqref{f1} to a nonlocal and nonlinear equation for the particle density. The singularity of the particle interaction potential $V$ at $0$ plays a crucial role here, since it has a regularising effect on the particle density. In recent studies, such limit passages $n \to \infty$ have been carried out for:
\begin{itemize}
  \item convex $V$ (except at $0$) in one spatial dimension in \cite{VanMeursMuntean14}, 
  \item logarithmic $V$ in two spatial dimensions in \cite{Schochet96}, which was later extended to stronger singularities in \cite{Duerinckx16}, 
  \item sub-Coulombic potentials (i.e., $V(x) \ll |x|^{2-m}$ as $x \to 0$) in arbitrary spatial dimension $m$ in \cite{Hauray09},
  \item multi-species particle systems with regular cross-interactions in \cite{vanMeurs18,vanMeursMorandotti18ArXiv},
  \item multi-species particle systems with singular cross-interactions in \cite{ChapmanXiangZhu15,GarroniVanMeursPeletierScardia18ArXiv},
  \item a rate-independent velocity law in \cite{MoraPeletierScardia17}.
\end{itemize}
Besides singular potentials, it is also possible to obtain a regular particle density by adding white noise to the particle dynamics. It has been proven in \cite{Sznitman91,Philipowski07} that such systems satisfy the propagation of chaos property, which implies that the limit can be described by a deterministic equation for the particle density. However, in all these studies, the problem of particles attaching to or detaching from boundaries is side-stepped. With Theorem \ref{Th3} in hand, we expect that it is possible to extend the techniques in the references cited above to establish the limit passage $n \to \infty$ in \eqref{f1}. We leave this challenge for future research. \medskip

Another interesting generalisation of our result is the extension to second order ODE systems with inertial term. This generalisation has been studied by \cite{schatzman1978class} for convex energies and convex domains. We believe that our current result provides a new viewpoint from which Schatzman's result can be extended to non-convex energies (which allows, e.g., for repulsive particle interactions) and non-convex domains. An important application that we have in mind for this is a properly motivated boundary condition in particle simulations for fluids. Indeed, in many such simulations (in particular when SPH is used), there is no established theory on how to treat properly boundary conditions for the particles which effectively model slip and no-slip conditions in the equations for the fluid.\medskip

To gain insight in the solution trajectories of \eqref{f1} (in particular, how particles attach to and detach from the boundary), we perform several numerical simulations of the solution to \eqref{f1}. We study the case where the force $F_i$ is the gradient of an unbounded interaction energy with an external potential. Our interest lies in the behavior of the system for several different, non-convex domains $\Omega$. \medskip

The paper is organised as follows. In Section \ref{sec2} we state and prove Theorem \ref{theorem1}, which provides the existence and uniqueness of mild solutions to \eqref{f1}. In Section \ref{sec3} we extend Theorem \ref{theorem1} to Theorem \ref{Th3}, which allows for a class of discontinuous $F$. In Section \ref{sec4} we perform and discuss our numerical simulations. We conclude in Section \ref{sec5}. 

\section{Discontinuous ODE system: existence and uniqueness}\label{sec2}
\setcounter{equation}{0}
\subsection{Problem setting and main theorem}
\label{ss:PnT}

Let $\Omega$ be a bounded domain in $\R^m$ with $C^2$ boundary $\partial\Omega$. We denote the outward unit normal vector on $\partial\Omega$ by $\nu$.

We consider $n$ particles $x_1,\dots ,x_n$ in $\overline\Omega$. We put $X:=\{x_i\}_{i=1}^n \in \overline\Omega^n \subset (R^m)^n$ as the array of the particle's positions. On $(\R^m)^n$ we define the norm
\[
\|X\| := \max_{1\le i\le n}|x_i|,
\]
where $|x_i|$ is the Euclidean distance in $\R^m$.
We interpret
\begin{equation}\label{G}
  G := \overline\Omega^n 
\end{equation}
as the set of admissible particle positions.

Next we introduce the dynamics of the particle system. We denote by $X(t) = \{x_i(t)\}_{i=1}^n$ the particle positions in time, and by $\dot{x_i}(t)=\frac{dx_i}{dt}(t)$ their velocity. We prescribe the velocity of any particle $x_i(t)$ depending on whether $x_i(t)$ is in the interior or at the boundary of $\Omega$. If $x_i(t) \in \Omega$, then its velocity is given by the force $F_i(t, X(t))$, where 
\begin{equation}\label{1}
F_i\in C([0,T]\times G;\R)\quad(i=1,\dots,n).
\end{equation}
In some cases, we impose the following hypothesis on Lipschitz continuity of $F_i$: there exists an $L > 0$ such that
\begin{equation}\label{2}
|F_i(t,X)-F_i(t,Y)|\le L\|X-Y\| \quad (t\in [0,T],~X,Y\in (\R^m)^n,~i=1,\dots,n).
\end{equation} 

If $x_i(t) \in \partial \Omega$, then we prescribe a different rule. If the force $F_i(t, X(t))$ acting on $x_i(t)$ is directed to the interior of $\Omega$, then we prescribe its velocity by $F_i(t, X(t))$. Otherwise, we project $F_i(t, X(t))$ onto the tangent plane of $\partial \Omega$ at $x_i(t)$ (see Figure \ref{Omega}). To define this rule mathematically, we put
\begin{equation}\label{P}
P : \overline\Omega \times \R^m \to \R^m, \quad
P(x,f):=\left\{
\begin{aligned}
  &  f - \big(f \cdot \nu(x)\big) \nu(x) &&(x\in\partial\Omega, f\cdot \nu(x)>0),\\
  &  f  &&\text{otherwise},
\end{aligned}
\right.
\end{equation}
and prescribe the velocity of $x_i(t)$ as $P (x_i(t),F_i(t,X) )$. For convenience, we set
\[
H_i : [0,T] \times G \to \R^m, \quad
H_i \big( t,X \big) := P\big(x_i, F_i(t,X)\big)\quad(t\in [0,T], X=\{x_i\}\in G).
\]

\begin{figure}[htbp]
  \begin{center}
    \includegraphics[width=80mm]{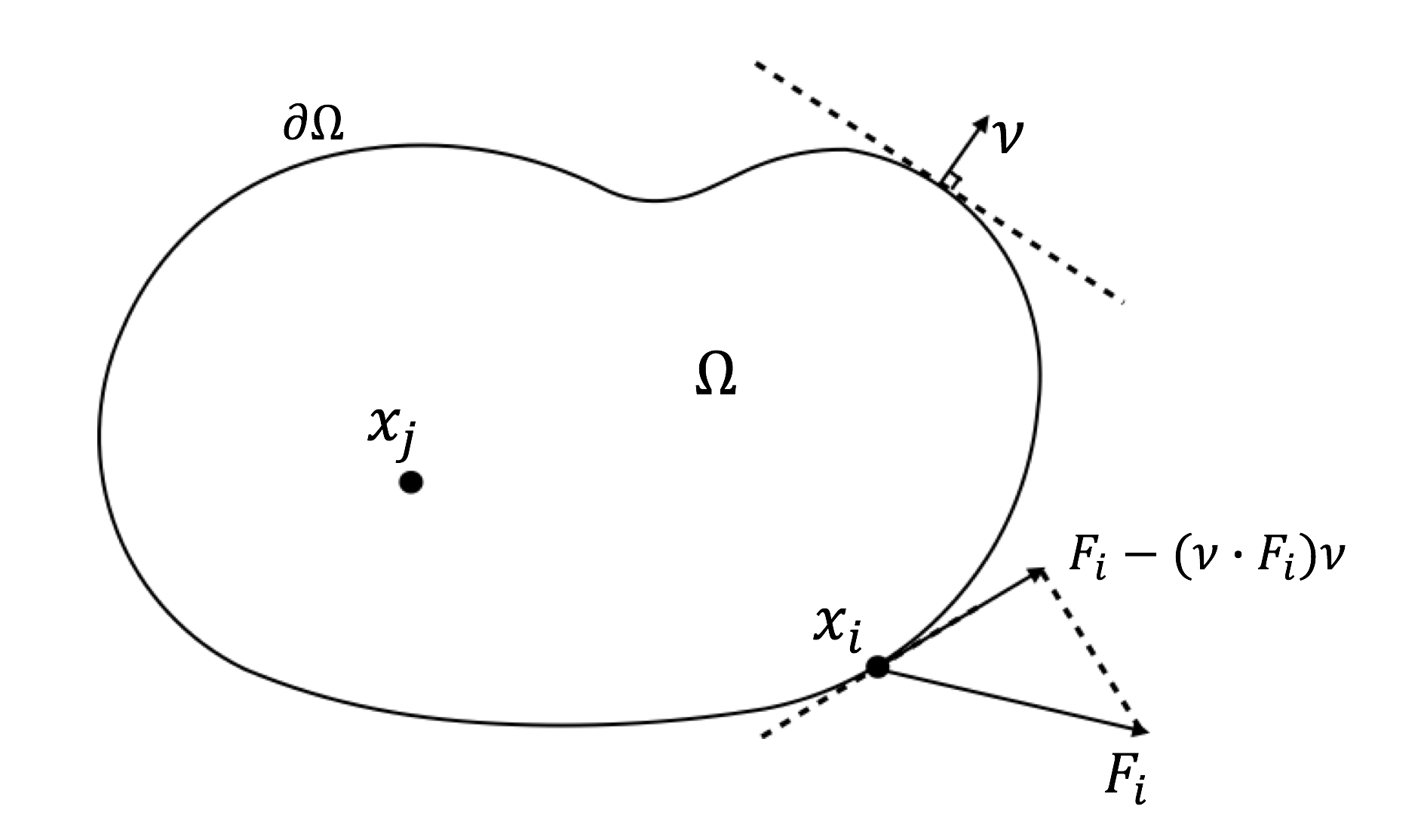}
  \end{center}
  \caption{Sketch of $\overline{\Omega}$ and the role of the 'one-sided projection' of the force $F_i$ to prevent particles from escaping $\overline\Omega$.}
  \label{Omega}
\end{figure}

To summarise the dynamics of the particles, we introduce the following problem:
\begin{Prob}\label{problem1}
Given $T > 0$ and an initial condition $X^\circ =\{x_i^{\circ}\}_{i=1}^n\in G$, for $i=1,2,\dots,n$, find a solution to
\[
\left\{
\begin{aligned}
  & \dot{x}_i(t)=H_i\big(t,X(t)\big)\quad (0 < t < T),\\
  & x_i(0)=x_i^{\circ}.
  \end{aligned}
  \right.
\]
\end{Prob}

Since $H_i$ is discontinuous, we do not expect solutions to Problem~\ref{problem1} to be $C^1$ in time. Instead, we consider mild solutions:
  
\begin{Def}\label{sol}
  We call $X(t)$ a (mild) solution to Problem~\ref{problem1} if it satisfies $x_i\in C^0([0,T];\R^m)$, $x_i(t)\in\overline{\Omega}$ and
  \[
  x_i(t)=x_i^{\circ}+\int_0^tH_i\big(s,X(s)\big)ds\quad \big(t\in [0,T)\big),
  \]
  for $i=1,2,\dots,n$.
\end{Def}

The following proposition gives a convenient characterisation of mild solutions. The proof is obvious.

\begin{Prop} \label{prop:23}
For Problem~\ref{problem1}, $X(t)=\{x_i(t)\}_{i=1}^n$ is a mild solution if and only if
\[
\left\{
\begin{aligned}
  & x_i\in\mathrm{Lip}([0,T];\R^m)\\
  & \dot{x}_i(t)=H_i(t,X(t))\quad \big(\text{a.e.~} t\in[0,T]\big)\\
  & x_i(0)=x_i^{\circ}.
  \end{aligned}
  \right.
\]
\end{Prop}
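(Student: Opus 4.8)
The plan is to establish the two implications separately, relying on the standard real-analysis equivalence between an absolutely continuous function, its almost-everywhere derivative, and its integral representation. The common ingredient for both directions is the \emph{boundedness} of the integrand. Since $F_i$ is continuous on the compact set $[0,T]\times G$ (recall $G=\overline\Omega^{\,n}$ is compact because $\Omega$ is bounded), there is an $M>0$ with $|F_i(t,X)|\le M$ on $[0,T]\times G$. The map $f\mapsto P(x,f)$ is either the identity or an orthogonal projection onto a hyperplane, hence non-expansive, so that $|H_i(t,X)|\le |F_i(t,X)|\le M$ for every $(t,X)\in[0,T]\times G$.

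For the forward implication (a mild solution satisfies the three conditions), I would start from the integral equation in Definition~\ref{sol}. For $0\le s\le t<T$ it gives $x_i(t)-x_i(s)=\int_s^t H_i(r,X(r))\,dr$, whence $|x_i(t)-x_i(s)|\le M|t-s|$; thus $x_i\in\mathrm{Lip}([0,T];\R^m)$. Because the integrand is bounded and (see below) measurable, the Lebesgue differentiation theorem yields $\dot{x}_i(t)=H_i(t,X(t))$ for a.e.\ $t\in[0,T]$, and evaluating the integral equation at $t=0$ gives $x_i(0)=x_i^{\circ}$.

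For the reverse implication, a Lipschitz map is absolutely continuous, so the fundamental theorem of calculus for absolutely continuous functions applies and gives $x_i(t)=x_i(0)+\int_0^t\dot{x}_i(s)\,ds=x_i^{\circ}+\int_0^t H_i(s,X(s))\,ds$, which is exactly the integral equation; continuity is immediate from Lipschitz continuity. The only remaining requirement from Definition~\ref{sol} is $x_i(t)\in\overline\Omega$: since the right-hand side $H_i(\cdot,X(\cdot))$ is only defined on $[0,T]\times G$, the identity $\dot{x}_i(t)=H_i(t,X(t))$ for a.e.\ $t$ presupposes $X(t)\in G=\overline\Omega^{\,n}$ for a.e.\ $t$, and by continuity of $X$ together with closedness of $G$ this holds for all $t$.

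The one step that is not entirely automatic, and which I expect to be the main (if minor) obstacle, is the measurability of the composed integrand $s\mapsto H_i(s,X(s))$, needed so that both the integral and the differentiation theorem make sense despite the discontinuity of $P$. I would handle this by splitting $[0,T]$ along the set $\{s: x_i(s)\in\partial\Omega\}$, which is closed because $x_i$ is continuous and $\partial\Omega$ is closed. Off this set $H_i(s,X(s))=F_i(s,X(s))$ is continuous; on it, the scalar $s\mapsto F_i(s,X(s))\cdot\nu(x_i(s))$ is continuous (as $\nu$ is $C^1$ on the $C^2$ boundary), so the subset where it is positive is relatively open and hence Borel, and on that subset the projected expression $F_i(s,X(s))-\big(F_i(s,X(s))\cdot\nu(x_i(s))\big)\nu(x_i(s))$ is again continuous. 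Consequently the integrand is Borel measurable, and both implications go through.
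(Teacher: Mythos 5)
Your proof is correct; the paper itself offers no argument (it declares the proposition ``obvious''), and what you have written is precisely the standard argument being taken for granted: boundedness of $H_i$ via the non-expansiveness of $P$, the Lebesgue differentiation theorem in one direction, the fundamental theorem of calculus for Lipschitz (hence absolutely continuous) functions in the other, and recovery of $X(t)\in G$ from continuity and closedness. Your treatment of the one genuinely non-automatic point --- Borel measurability of $s\mapsto H_i(s,X(s))$ despite the discontinuity of $P$, handled by splitting along the closed set $\{s: x_i(s)\in\partial\Omega\}$ --- is sound and is exactly the detail the paper's ``obvious'' sweeps under the rug.
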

Our main result is the following theorem, which grants the existence and uniqueness of mild solutions to Problem \ref{problem1}:

\begin{Th}\label{theorem1}
Let $T>0$ and $X^{\circ}\in G$. If $F_i$ satisfies \eqref{1}, then there exists a solution to Problem~\ref{problem1}. Moreover, if $F_i$ satisfies the Lipschitz condition \eqref{2}, then the solution is unique.
\end{Th}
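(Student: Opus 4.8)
The plan is to prove existence by a compactness argument built on a time-discrete projection scheme, and uniqueness by a Grönwall estimate. Both parts rest on a single geometric property of the domain: since $\partial\Omega$ is $C^2$ and $\overline\Omega$ is bounded, $\overline\Omega$ has positive reach $r>0$, so that every point within distance $r$ of $\overline\Omega$ has a unique nearest point, the metric projection $\Pi_{\overline\Omega}$ is well-defined and $1$-Lipschitz on the $r$-tubular neighbourhood, and the proximal normal cone $N(x)$ to $\overline\Omega$ is \emph{hypomonotone}: there is a constant such that $\langle \zeta-\eta,\,x-y\rangle \ge -\tfrac1r(|\zeta|+|\eta|)\,|x-y|^2$ for all $x,y\in\overline\Omega$, $\zeta\in N(x)$, $\eta\in N(y)$. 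First I would record that, for $x\in\partial\Omega$, the map $f\mapsto P(x,f)=f-(f\cdot\nu(x))_+\nu(x)$ is precisely the projection of $f$ onto the tangent cone $\{v:v\cdot\nu(x)\le 0\}$, so that $f-P(x,f)=(f\cdot\nu(x))_+\nu(x)\in N(x)$; in particular $|H_i(t,X)|\le|F_i(t,X)|$, and by compactness of $G$ together with \eqref{1} all velocities are bounded by $M:=\max_i\sup_{[0,T]\times G}|F_i|$.

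For existence I would fix $N$, set $h=T/N$, $t_k=kh$, and run the catching-up (Euler--projection) scheme $x_i^{k+1}=\Pi_{\overline\Omega}\big(x_i^k+hF_i(t_k,X^k)\big)$, $x_i^0=x_i^\circ$, which is well-defined for $h$ small since each step moves at most $hM$ and hence stays within the $r$-neighbourhood. Writing $n_i^k:=\tfrac1h\big((x_i^k+hF_i(t_k,X^k))-x_i^{k+1}\big)\in N(x_i^{k+1})$ with $|n_i^k|\le M$, the piecewise-linear interpolant $X^N$ has piecewise-constant velocity $F_i(t_k,X^k)-n_i^k$ of norm $\le M$. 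Thus the $X^N$ are uniformly $M$-Lipschitz with values in the compact set $\overline\Omega^n$, and Arzelà--Ascoli yields a subsequence converging uniformly to an $M$-Lipschitz limit $X$ with $X(t)\in\overline\Omega^n$ and $x_i(0)=x_i^\circ$. Since $F_i$ is uniformly continuous on $[0,T]\times G$, one has $F_i(t_k,X^k)\to F_i(\cdot,X(\cdot))$ uniformly, so the normal parts $n_i^N$ converge weakly in $L^2$ to some $n_i$ with $\dot x_i=F_i(\cdot,X)-n_i$, and the weak--strong closedness of the graph of the prox-regular normal cone gives $n_i(t)\in N(x_i(t))$ a.e. It remains to identify $n_i$ with $(F_i\cdot\nu)_+\nu$. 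On $\{t:x_i(t)\in\Omega\}$ the normal cone is trivial, so $n_i=0$ and $\dot x_i=F_i=H_i$. On $\{t:x_i(t)\in\partial\Omega\}$ the signed-distance function $d\circ x_i$ attains its minimum value $0$, hence its (a.e.\ existing) derivative vanishes at density points, giving the tangency $\dot x_i\cdot\nu=0$; combined with $n_i=\lambda_i\nu$, $\lambda_i\ge0$, this forces $\lambda_i=F_i\cdot\nu=(F_i\cdot\nu)_+$. Therefore $\dot x_i=H_i(\cdot,X)$ a.e., and by Proposition~\ref{prop:23} $X$ is a mild solution.

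For uniqueness, let $X,Y$ be two mild solutions and put $\Phi(t):=\sum_i|x_i(t)-y_i(t)|^2$, which is Lipschitz, hence a.e.\ differentiable. Writing $H_i(t,X)=F_i(t,X)-n_i^X$ and $H_i(t,Y)=F_i(t,Y)-n_i^Y$ with $n_i^X\in N(x_i)$, $n_i^Y\in N(y_i)$, $|n_i^X|,|n_i^Y|\le M$, I would compute for a.e.\ $t$
\[
\tfrac12\dot\Phi=\sum_i\langle x_i-y_i,\,F_i(t,X)-F_i(t,Y)\rangle-\sum_i\langle x_i-y_i,\,n_i^X-n_i^Y\rangle.
\]
The first sum is bounded via the Lipschitz assumption \eqref{2} and $|x_i-y_i|\le\|X-Y\|$ by $nL\,\Phi$, while the second is controlled by hypomonotonicity: $-\langle x_i-y_i,\,n_i^X-n_i^Y\rangle\le\tfrac{2M}{r}|x_i-y_i|^2$, so it is at most $\tfrac{2M}{r}\Phi$. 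This yields $\dot\Phi\le C\Phi$ with $C=2nL+4M/r$, and since $\Phi(0)=0$, Grönwall's inequality gives $\Phi\equiv0$, i.e.\ $X=Y$.

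I expect the main obstacle to be the passage to the limit through the discontinuity of $H_i$ in the existence proof — specifically, showing that the weak limit of the discrete normal corrections still lies in the normal cone and coincides with $(F_i\cdot\nu)_+\nu$. This is exactly where the $C^2$ regularity of $\partial\Omega$ is indispensable: it is the positive-reach/prox-regularity of $\overline\Omega$ (well-posedness of $\Pi_{\overline\Omega}$ near the boundary, weak--strong closedness of $N$, and the hypomonotonicity estimate) that replaces the monotonicity one would have for convex $\Omega$, and that simultaneously underpins the Grönwall step in the uniqueness proof. A subtle point within this obstacle is the tangency argument on the boundary-sojourn set $\{x_i\in\partial\Omega\}$, which must be justified through the a.e.\ differentiability of $d\circ x_i$ at density points of this (generally positive-measure) set.
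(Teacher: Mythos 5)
Your proposal is correct in outline, and on the existence side it takes a genuinely different route from the paper. The paper does not discretize time: it penalizes space, replacing the constraint by the stiff ODE $\dot x_i^k = F_i(t,X^k) - k\,(d\nabla d)(x_i^k)$ posed on a tubular neighbourhood of $\overline\Omega$ (Problem~\ref{problem2}), proves solutions stay within distance $\|F\|_\infty/k$ of $\overline\Omega$, extracts a limit by Arzel\`a--Ascoli plus weak-$*$ compactness, and identifies the limit velocity by splitting the penalty term and invoking the co-area formula to show that the normal part of $\dot x_i$ vanishes a.e.\ on the set where $x_i(t)\in\partial\Omega$. Your catching-up scheme replaces all of this with the prox-regularity package: positive reach of $\overline\Omega$, hypomonotonicity, and weak--strong closedness of the normal-cone graph. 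The two identification steps ultimately rest on the same real-analysis fact---a Lipschitz function has vanishing derivative a.e.\ on any level set---which the paper encodes via the co-area formula and you via density points; likewise, your hypomonotonicity estimate in the uniqueness proof is exactly the abstract form of the paper's key inequality $(x-y)\cdot\nu(y)\le C|x-y|^2$, obtained there by a Taylor expansion of the signed distance $d_s$, so the two Gr\"onwall arguments coincide in substance (the paper works with the max-norm and a case analysis, you with the sum of squares). What your route buys: contact with the sweeping-process literature, no need to construct the extensions $d\nabla d$ and $d_s^\varepsilon$, and an approximating scheme that is essentially the project-after-step method the paper itself uses numerically in Section~\ref{sec4}. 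What the paper's route buys: a self-contained, elementary proof, and as a by-product the approximation theorem for the penalized dynamics stated after Proposition~\ref{property}. Two points you should not leave implicit: (i) the weak--strong closedness of the condition $n^N(t)\in N(x^N(t))$ under $x^N\to x$ uniformly and $n^N\rightharpoonup n$ in $L^2$ is the load-bearing lemma of your existence proof; it does hold for prox-regular sets, via the convex characterization $N(x)\cap\overline B_M=\{v:\ |v|\le M,\ \langle v,\,y-x\rangle\le \tfrac{M}{2r}|y-x|^2\ \text{for all } y\in\overline\Omega\}$ integrated against nonnegative test functions (or by citation to the sweeping-process literature), and since your discrete normals are attached to the endpoints $x_i^{k+1}$ rather than to the interpolant, the argument needs a harmless $O(h)$ correction; (ii) minor quantitative slips: the discrete velocity bound is $2M$ rather than $M$, and the standard hypomonotonicity constant is $(|\zeta|+|\eta|)/(2r)$, neither of which affects the conclusion.
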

This theorem is a natural extension of Cauchy-Peano theorem and the Picard-Lindel$\ddot{o}$f theorem to particles systems with impenetrable boundaries. The proof for the existence and the uniqueness is given in Sections~\ref{Existence} and \ref{Uniqueness}, respectively.

\subsection{Distance functions with respect to $\Omega$ and $\partial \Omega$}

In the proof of Theorem \ref{theorem1}, we define an approximation problem in which particles can leave $\overline\Omega$. To account for this, we define in this section distance functions on $\R^m$ with respect to $\Omega$ and $\partial\Omega$. For $K\subset\R^m$ and $\varepsilon>0$, we defne
\[
\text{dist}(x,K):=\inf_{y\in K}|x-y|,
\]
\[
N^{\varepsilon}(K):=\{x\in\R^m~;~\text{dist}(x,K)<\varepsilon\}.
\]
  For the domain $\Omega$, we define the distance function $d$ with respect to $\Omega$ and the signed distance function $d_s$ with respect to $\partial \Omega$ as
  \[
  \begin{aligned}
    & d(x):=\mathrm{dist}(x,\overline{\Omega})\quad(x\in\R^m),\\
    & d_s(x):=\left\{
    \begin{aligned}
      & \quad \mathrm{dist}(x,\partial\Omega)\quad(x\notin\overline{\Omega}),\\
      & -\mathrm{dist}(x,\partial\Omega)\quad(x\in\overline{\Omega}).
    \end{aligned}
    \right.
  \end{aligned}
  \]
  
Since $\partial\Omega\in C^2$, there exists $\varepsilon>0$ such that $d\in C^2(\overline{N^{\varepsilon}(\partial\Omega)}\setminus\Omega)$ and $d_s\in C^2(\overline{N^{\varepsilon}(\partial\Omega)})$. We fix this $\varepsilon$ throughout Section~\ref{Existence} and Section~\ref{Uniqueness}. It is known (see, e.g., \cite{gilbarg2015elliptic,Kimura2008}) that the signed distance function $d_s$ satisfies
\[
\nabla d_s(x)=\nu(\bar{x}),~\bar{x}=x-d_s(x)\nabla d_s(x)\in \partial\Omega\quad\big(x\in\overline{N^{\varepsilon}(\partial\Omega)}\big),
\]
where $\bar{x}\in\partial\Omega$ denotes the orthogonal projection from $x\in\overline{N^{\varepsilon}(\partial\Omega)}$ onto $\partial\Omega$. We set
\begin{equation}\label{df}
d\nabla d(x):=\left\{
\begin{aligned}
& d(x)\nabla d(x)
& (x\in\overline{N^{\varepsilon}(\partial\Omega)}\setminus\overline{\Omega}),\\
& 0
& (x\in\overline{\Omega}).
\end{aligned}
\right.
\end{equation}

Let $\varepsilon>0$ as above. We put $d_s^{\varepsilon}\in C^2(\R^m)$ as a smooth extension of $d_s$ outside $N^{\varepsilon}(\partial \Omega)$:
  \[
  d_s^{\varepsilon}(x) \left\{
  \begin{aligned}
    & =d_s(x)
    & \big(x &\in N^{\varepsilon}(\partial \Omega)\big), \\
    & \ge \varepsilon 
    & \big(x &\in\R^m\setminus N^{\varepsilon}(\Omega)\big),\\
    & \le -\varepsilon 
    & \big(x &\in\Omega\setminus N^{\varepsilon}(\partial \Omega)\big).
  \end{aligned}
  \right.
  \]
We note that, by construction, $\nabla d_s^{\varepsilon}$ is a $C^1$-extension of $\nu$ from $\partial \Omega$ to $\R^m$. 
Finally, we establish the following Lipschitz property of $d\nabla d$:
\begin{Prop}\label{d}
The function $d\nabla d(x)$ defined in \eqref{df} satisfies
  \[
  d\nabla d\in \mathrm{Lip}(\overline{N^{\varepsilon}(\Omega)};\R^m).
  \]
\end{Prop}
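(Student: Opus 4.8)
The plan is to recognise $d\nabla d$ as a regularised squared-distance gradient and to realise it globally as a product of two manageable factors. The starting point is the elementary identity $d(x) = (d_s(x))_+$, valid on $\overline{N^{\varepsilon}(\partial\Omega)}$: for $x\notin\overline\Omega$ one has $d_s(x) = \mathrm{dist}(x,\partial\Omega) = \mathrm{dist}(x,\overline\Omega) = d(x) > 0$, while for $x\in\overline\Omega$ one has $d(x)=0$ and $d_s(x)\le 0$. Consequently $\nabla d = \nabla d_s$ wherever $d>0$, and the two branches in the definition \eqref{df} fuse into the single expression $d\nabla d = (d_s)_+\,\nabla d_s$ throughout $\overline{N^{\varepsilon}(\partial\Omega)}$.

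To obtain a function that is defined and regular on the whole of $\overline{N^{\varepsilon}(\Omega)}$, I would pass to the global $C^2$ extension $d_s^{\varepsilon}$ and set $w(x) := (d_s^{\varepsilon}(x))_+\,\nabla d_s^{\varepsilon}(x)$, which is defined on all of $\R^m$. The key bookkeeping step is to check that $w = d\nabla d$ on $\overline{N^{\varepsilon}(\Omega)}$. On $\overline{N^{\varepsilon}(\Omega)}\setminus\overline\Omega$ one has $0<d<\varepsilon$, so the point lies in $N^{\varepsilon}(\partial\Omega)$, where $d_s^{\varepsilon}=d_s=d$ and $\nabla d_s^{\varepsilon}=\nabla d_s=\nabla d$, giving $w=d\nabla d$. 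On $\overline\Omega$ the sign conditions imposed on the extension (namely $d_s^{\varepsilon}=d_s\le 0$ on $N^{\varepsilon}(\partial\Omega)$ and $d_s^{\varepsilon}\le-\varepsilon$ on $\Omega\setminus N^{\varepsilon}(\partial\Omega)$) force $(d_s^{\varepsilon})_+=0$, so $w=0=d\nabla d$. Thus $d\nabla d$ coincides on $\overline{N^{\varepsilon}(\Omega)}$ with the globally defined $w$, and it suffices to prove that $w$ is Lipschitz there.

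For the Lipschitz bound I would write $w$ as the product of the scalar factor $a(x) := (d_s^{\varepsilon}(x))_+$ and the vector factor $b(x) := \nabla d_s^{\varepsilon}(x)$, and use that a product of two bounded Lipschitz functions is Lipschitz. The factor $a$ is Lipschitz as the composition of the $1$-Lipschitz map $s\mapsto s_+$ with $d_s^{\varepsilon}\in C^2$, and $b=\nabla d_s^{\varepsilon}$ is $C^1$ and hence Lipschitz; both are bounded on the compact set $\overline{N^{\varepsilon}(\Omega)}$, where I use that $\Omega$ is bounded so that this set is compact. The standard estimate $|a(x)b(x)-a(y)b(y)| \le \|a\|_{\infty}\,|b(x)-b(y)| + \|b\|_{\infty}\,|a(x)-a(y)|$ then delivers a Lipschitz constant for $w$, and hence for $d\nabla d$, on $\overline{N^{\varepsilon}(\Omega)}$.

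The genuinely delicate point is not any single estimate but the matching across $\partial\Omega$: one must verify that the value $0$ assigned by \eqref{df} inside $\overline\Omega$ is exactly what the smooth product formula produces there, so that no spurious jump is introduced at the interface. This is precisely what the factor $(d_s^{\varepsilon})_+$ guarantees, since $d_s^{\varepsilon}\le 0$ on $\overline\Omega$; reducing the whole statement to the Lipschitz continuity of a single globally defined product $w$ is what keeps the argument clean and sidesteps any ad hoc gluing lemma. Equivalently, one may observe that $\tfrac12 d^2 = \tfrac12(d_s)_+^2$ is $C^{1,1}$ near $\partial\Omega$ with gradient $d\nabla d$, and that $C^{1,1}$ functions have Lipschitz gradient; the product argument above is just an unpacking of this fact.
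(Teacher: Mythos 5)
Your proof is correct, but it takes a genuinely different route from the paper's. The paper argues directly on $d\nabla d$ by splitting into three cases according to where the two points lie: both in $\overline\Omega$ (trivial, since the function vanishes there), both in the exterior annulus $\overline{N^{\varepsilon}(\Omega)}\setminus\overline\Omega$ (where it invokes $C^1$-regularity of $d\nabla d$), and the mixed case, which it settles with the one-line estimate $|d(y)\nabla d(y)|=d(y)=\min_{z\in\overline\Omega}|z-y|\le|x-y|$. You instead globalize: using $d=(d_s)_+$ you identify $d\nabla d$ with the single formula $w=(d_s^{\varepsilon})_+\,\nabla d_s^{\varepsilon}$ on all of $\overline{N^{\varepsilon}(\Omega)}$, and then apply the product-of-bounded-Lipschitz-functions estimate. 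Your route is slightly longer but more robust in one respect: the paper's Case 2 treats the exterior annulus as obvious from $C^1$-regularity, yet that set is not convex (for two exterior points on opposite sides of $\Omega$ the connecting segment may cross $\overline\Omega$), so strictly speaking that case needs either a chord-splitting argument or exactly the kind of global formula you use; your single function $w$, Lipschitz on a large ball, dispenses with all positional case analysis at once, and your closing remark that $d\nabla d=\tfrac12\nabla(d^2)$ with $d^2$ of class $C^{1,1}$ is the cleanest summary of why the result holds. What the paper's approach buys is economy of means: it needs no extension $d_s^{\varepsilon}$ (though the paper constructs one anyway for later use) and gives an explicit constant (namely $1$) in the mixed case. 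One small imprecision to fix in yours: on $\overline{N^{\varepsilon}(\Omega)}\setminus\overline\Omega$ you claim $0<d<\varepsilon$, but on the outer boundary $d=\varepsilon$, so such points lie only in the closure $\overline{N^{\varepsilon}(\partial\Omega)}$, not in $N^{\varepsilon}(\partial\Omega)$ itself; the identity $w=d\nabla d$ still holds there because both sides are continuous on $\overline{N^{\varepsilon}(\Omega)}$ and agree on the dense subset $N^{\varepsilon}(\Omega)$ (equivalently, the identities $d_s^{\varepsilon}=d_s$ and $\nabla d_s^{\varepsilon}=\nabla d_s$ extend from $N^{\varepsilon}(\partial\Omega)$ to its closure by continuity).
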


\begin{proof}
We show that there exists a constant $C' > 0$ such that for all $x,~y\in \overline{N^{\varepsilon}(\Omega)}$,
\begin{equation}\label{pf:p1}
|d\nabla d(x)-d\nabla d(y)|\le C' |x-y|   
\end{equation}
holds. We separate three cases.
\smallskip

Case 1: $x,~y\in\overline{\Omega}$. Since $d\nabla d \equiv 0$ on $\overline{\Omega}$, \eqref{pf:p1} is obvious.
\smallskip

Case 2: $x,~y\in \overline{N^{\varepsilon}(\Omega)}\setminus\overline{\Omega}$.
Since $d\nabla d(x)\in C^1(N^{\varepsilon}(\partial\Omega)\setminus\overline{\Omega})$, \eqref{pf:p1} is obvious.
\smallskip

Case 3: $x\in\overline{\Omega}$ and $y\in \overline{N^{\varepsilon}(\Omega)}\setminus\overline{\Omega}$. We conclude \eqref{pf:p1} from
\[
|d\nabla d(x)-d\nabla d(y)|=|d(y)\nabla d(y)|=d(y)=\min_{z\in\overline{\Omega}}|z-y|\le |x-y|.
\]
\end{proof}


\subsection{Preliminaries and notation}
\label{ss:PnN}

In this section, we introduce some notation for the proof of Theorem \ref{theorem1}, and recall some well-known theorems.

Using the assumed regularity on $\Omega$ and $F_i$, we extend $F_i$ to $[0, \infty) \times (\R^m)^n$ such that $F_i$ is still continuous. In the remainder of the paper, we write $F_i$ for this extension.

We define the $L^{\infty}$-norm of $F := (F_1, \ldots, F_n)$ as
  \[
  \|F\|_{\infty} := \max_{t \in [0,T]}\max_{X\in (\R^m)^n}\max_{1\le i\le n}|F_i(t,X)|.
  \]
We set $L^{\infty}(0,T;[0,1]):=\{\chi\in L^{\infty}(0,T)~;~\chi(t)\in [0,1]~\text{for a.e.}~t\}$, and denote the positive and negative part of a number or function by
  \[
  (a)_+:=\max(0,a),~(a)_-:=\max(0,-a).
  \]

As preparation for the proof of Theorem \ref{theorem1}, we recall two well-known results.
\begin{Th}[Ascoli-Arzela] \label{AA} 
  Let $(\mathcal X, \mathsf m)$ be a compact metric space. A sequence $\{f_n\}\in C(\mathcal X)$ has a uniformly convergent subsequence if it is bounded and equicontinuous, i.e., $\sup_n\|f_n\|<\infty$ and for every $\varepsilon>0$ there exists $\delta>0$ such that for all $n$ and all $x,~y\in \mathcal X$ with $\mathsf m(x, y) < \delta$, it holds that $|f_n(x)-f_n(y)|<\varepsilon$.
\end{Th}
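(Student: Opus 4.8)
The plan is to prove the theorem by the classical diagonal-extraction-plus-three-epsilon argument, which converts pointwise control on a countable dense set into uniform convergence by exploiting equicontinuity together with compactness. First I would use that a compact metric space is separable, fixing a countable dense subset $D = \{y_k\}_{k\in\N} \subset \mathcal X$. Since $\sup_n \|f_n\| < \infty$, the real sequence $\{f_n(y_1)\}_n$ is bounded, so by Bolzano--Weierstrass it admits a convergent subsequence; passing to that subsequence and repeating the extraction at $y_2$, then $y_3$, and so on, I would build a nested family of subsequences, and then take the diagonal subsequence $\{f_{n_j}\}_j$, which by construction converges at \emph{every} point of $D$.

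The second step would be to upgrade this pointwise convergence on $D$ to uniform convergence on all of $\mathcal X$, and this is where equicontinuity and compactness enter. Given $\varepsilon > 0$, equicontinuity furnishes a $\delta > 0$ (uniform in $n$) such that $\mathsf m(x,y) < \delta$ implies $|f_n(x) - f_n(y)| < \varepsilon/3$ for all $n$. By compactness, $\mathcal X$ is covered by finitely many balls of radius $\delta$, and by density of $D$ I would select one point of $D$, say $y_{k_1}, \ldots, y_{k_p}$, inside each such ball. On this finite set the diagonal subsequence converges, hence is uniformly Cauchy: there is a $J$ so that for $j, l \ge J$ one has $|f_{n_j}(y_{k_r}) - f_{n_l}(y_{k_r})| < \varepsilon/3$ for every $r = 1, \ldots, p$.

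Combining these estimates via the triangle inequality would finish the argument. For an arbitrary $x \in \mathcal X$, I would choose the covering ball containing it, with associated dense point $y_{k_r}$ satisfying $\mathsf m(x, y_{k_r}) < \delta$, and estimate
\[
|f_{n_j}(x) - f_{n_l}(x)| \le |f_{n_j}(x) - f_{n_j}(y_{k_r})| + |f_{n_j}(y_{k_r}) - f_{n_l}(y_{k_r})| + |f_{n_l}(y_{k_r}) - f_{n_l}(x)| < \varepsilon
\]
for all $j, l \ge J$, where the two outer terms are controlled by equicontinuity and the middle term by the finite-set Cauchy estimate. Since $J$ is independent of $x$, the diagonal subsequence is uniformly Cauchy on $\mathcal X$; by completeness of $(C(\mathcal X), \|\cdot\|)$ it then converges uniformly to a limit, which is automatically continuous, yielding the desired subsequence.

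The main obstacle I anticipate is precisely this passage from pointwise to uniform convergence: the diagonal argument only delivers convergence on the countable set $D$, and it is the interplay of the \emph{uniform} modulus of continuity coming from equicontinuity with the \emph{finiteness} of a $\delta$-net guaranteed by compactness that bridges the gap. The delicate point is to order the quantifiers correctly, so that the threshold $J$ is chosen after $\delta$ and the finite net but before the arbitrary $x$, ensuring it does not depend on $x$.
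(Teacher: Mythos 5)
The paper does not actually prove this statement: Theorem~\ref{AA} is recalled in Section~\ref{ss:PnN} as a well-known classical result (used later only as a black box in the compactness argument of Lemma~\ref{lemmaX}), so there is no proof in the paper to compare yours against. Your argument is the standard textbook proof --- diagonal extraction over a countable dense subset of the separable compact space, followed by the three-epsilon upgrade from pointwise convergence on the dense set to uniform Cauchyness via equicontinuity and a finite net --- and it is essentially correct, including the crucial point you flag about choosing $J$ uniformly in $x$. One small quantifier slip: you cover $\mathcal X$ by finitely many balls of radius $\delta$ and pick a dense point inside each, but for an arbitrary $x$ in such a ball this only gives $\mathsf m(x, y_{k_r}) < 2\delta$, not $< \delta$ as you assert. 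The fix is routine: either cover by balls of radius $\delta/2$, or, more cleanly, observe that the family $\{B(y,\delta)\,;\,y\in D\}$ already covers $\mathcal X$ by density of $D$, extract a finite subcover by compactness, and take its centers as your finite set; then every $x$ lies within $\delta$ of one of the chosen dense points and your triangle-inequality estimate goes through verbatim.
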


\begin{Th}[Co-area formula on $\R$ \cite{evans2018measure}]\label{CA} 
  Let $A \subset \R$ be a Borel set, and $u : A \to \R$ be Lipschitz continuous. Then, $y \mapsto \mathcal{H}^0\big(u^{-1}(y)\big)$ is measurable, and
  \[
  \int_{A}|u'(t)| dt =\int_{\R} \mathcal{H}^0\big(u^{-1}(y)\big) dy,
  \]
  where $\mathcal{H}^0$ is the zero dimensional Hausdorff measure (counting measure) on $\R$.
\end{Th}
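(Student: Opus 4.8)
The plan is to recognise this identity as the one-dimensional area formula (equivalently, Banach's indicatrix theorem for Lipschitz maps) and to prove it by isolating the non-critical part of $A$, decomposing it into pieces on which $u$ is injective, and counting multiplicities. First I would extend $u$ to a globally Lipschitz function on all of $\R$ (McShane's extension preserves the Lipschitz constant); by Rademacher's theorem this extension is differentiable almost everywhere, and I interpret $u'$ in the statement as this a.e.\ derivative (at every density point of $A$ the derivative is determined by the values of $u$ on $A$ alone, so $\int_A|u'|\,dt$ is unambiguous). Throughout I write $N(y):=\mathcal{H}^0\big(u^{-1}(y)\big)=\#\{t\in A:u(t)=y\}$ for the indicatrix. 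The first reduction is the critical set $Z:=A\cap\big(\{u'=0\}\cup\{u'\text{ does not exist}\}\big)$: by Rademacher and the elementary Lipschitz-image estimate $\mathcal{L}^1(u(E))\le\int_E|u'|\,dt$ (valid for all measurable $E$), one gets $\mathcal{L}^1(u(Z))=0$. Hence $Z$ contributes nothing to the left-hand side ($|u'|=0$ there a.e.) and, since its image is $\mathcal{L}^1$-null, removing it alters $N$ only over a null set of values $y$ and so leaves $\int_\R N\,dy$ unchanged. It therefore suffices to prove the identity with $A$ replaced by $A^*:=A\cap\{u'\neq 0\}$.

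Next I establish measurability of $N$. Fixing the dyadic intervals $I_{k,j}:=[\,j2^{-k},(j+1)2^{-k})$, I set
\[
N_k(y):=\sum_{j\in\mathbb{Z}}\chi_{u(A\cap I_{k,j})}(y),
\]
where $\chi_S$ is the indicator of $S$. Each $A\cap I_{k,j}$ is Borel, so its image under the continuous map $u$ is analytic, hence Lebesgue measurable; thus every $N_k$ is measurable. As $k$ grows, distinct points of $A\cap u^{-1}(y)$ are eventually separated into distinct dyadic intervals while each nonempty $A\cap I_{k,j}\cap u^{-1}(y)$ contributes exactly one unit, so $N_k(y)\uparrow N(y)$ for every $y$. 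This exhibits $N$ as a pointwise monotone limit of measurable functions and proves measurability.

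For the value of the integral I would invoke the countable bi-Lipschitz decomposition underlying the area formula: $A^*$ can be partitioned into countably many disjoint Borel sets $E_i$ such that each restriction $u|_{E_i}$ is injective and bi-Lipschitz onto its image. On each such piece the change of variables for injective Lipschitz maps gives $\mathcal{L}^1\big(u(E_i)\big)=\int_{E_i}|u'|\,dt$ (obtained by sandwiching with a bi-Lipschitz distortion factor and letting it tend to $1$). Writing $N^*(y):=\#\{t\in A^*:u(t)=y\}$ and using that the $E_i$ are disjoint with $u|_{E_i}$ injective, each preimage point of $y$ in $A^*$ lies in exactly one $E_i$ and that $E_i$ contains no second preimage, so $\sum_i\chi_{u(E_i)}(y)=N^*(y)$. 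Summing and applying the monotone convergence theorem yields
\[
\int_{A^*}|u'|\,dt=\sum_i\int_{E_i}|u'|\,dt=\sum_i\mathcal{L}^1\big(u(E_i)\big)=\int_\R\Big(\sum_i\chi_{u(E_i)}(y)\Big)dy=\int_\R N^*(y)\,dy,
\]
and combining with the critical-set reduction completes the proof.

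The main obstacle is precisely the countable bi-Lipschitz decomposition of $A^*$ into injective pieces. Because $A$ is an arbitrary Borel set and $u$ is only differentiable almost everywhere (not $C^1$), one cannot use local inversion or local monotonicity directly: pointwise differentiability with $u'(t_0)\neq 0$ does not by itself guarantee that $u$ is injective on any neighbourhood of $t_0$, and $u|_{A}$ may fold arbitrarily. Constructing the $E_i$ so that $u|_{E_i}$ is genuinely injective with controlled bi-Lipschitz constants requires the full measure-theoretic linearisation argument (as in the proof of the area formula), organised through the Lebesgue density theorem and the a.e.\ approximate affinity of $u$. The secondary technical points — the Lipschitz-image inequality $\mathcal{L}^1(u(E))\le\int_E|u'|$ and the consequent $\mathcal{L}^1$-nullity of the critical image $u(Z)$ — are comparatively routine and serve only to discard the set where $u'$ vanishes or fails to exist.
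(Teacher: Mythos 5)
Your proposal is correct, but note that the paper offers no proof of this statement at all: it is quoted as a known result with a citation to Evans--Gariepy, and your argument (McShane extension plus Rademacher, the Sard-type estimate $\mathcal{L}^1(u(E))\le\int_E|u'|\,dt$ to discard the critical set, the dyadic Banach-indicatrix construction for measurability of $y\mapsto\mathcal{H}^0(u^{-1}(y))$, and the countable bi-Lipschitz decomposition with change of variables on injective pieces) is precisely the standard area-formula proof from that reference, specialized to dimension $n=m=1$. The one step you invoke rather than prove, the countable decomposition of $A\cap\{u'\neq 0\}$ into pieces on which $u$ is injective and bi-Lipschitz, is correctly identified and attributed, so the outline is complete as a proof of the cited theorem.
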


\subsection{The approximate problem to Problem \ref{problem1}}
\label{apx:prob}

In preparation for proving the existence of solutions to Problem \ref{problem1}, we define Problem \ref{problem2} to construct a sequence $\{X^k(t)\}_k$ of approximate solutions. We recall that we extended $F_i$ from $[0,T] \times G$ to $[0,\infty) \times (\R^m)^n$ in Section \ref{ss:PnN}. 

\begin{Prob}\label{problem2}
  Given $k\in\N$ and $X^\circ \in G$, find $X^k(t)=\{x_i^k(t)\}_{i=1}^n \in (\R^m)^n$ such that
  \begin{equation}\label{dotx}
  \left\{
  \begin{aligned}
    & \dot{x}_i^k(t)=F_i(t,X^k(t))-k(d\nabla d)(x_i^k(t)) \quad (0 < t < T)\\
    & x_i^k(0)=x_i^{\circ}. 
    \end{aligned}
  \right.
  \end{equation}
  \end{Prob}

Figure \ref{example} illustrates a typical situation for a solution to Problem \ref{problem2}. The following lemma describes properties of solutions to Problem \ref{problem2}.  
 \begin{figure}[htbp]
      \begin{center}
        \includegraphics[width=80mm]{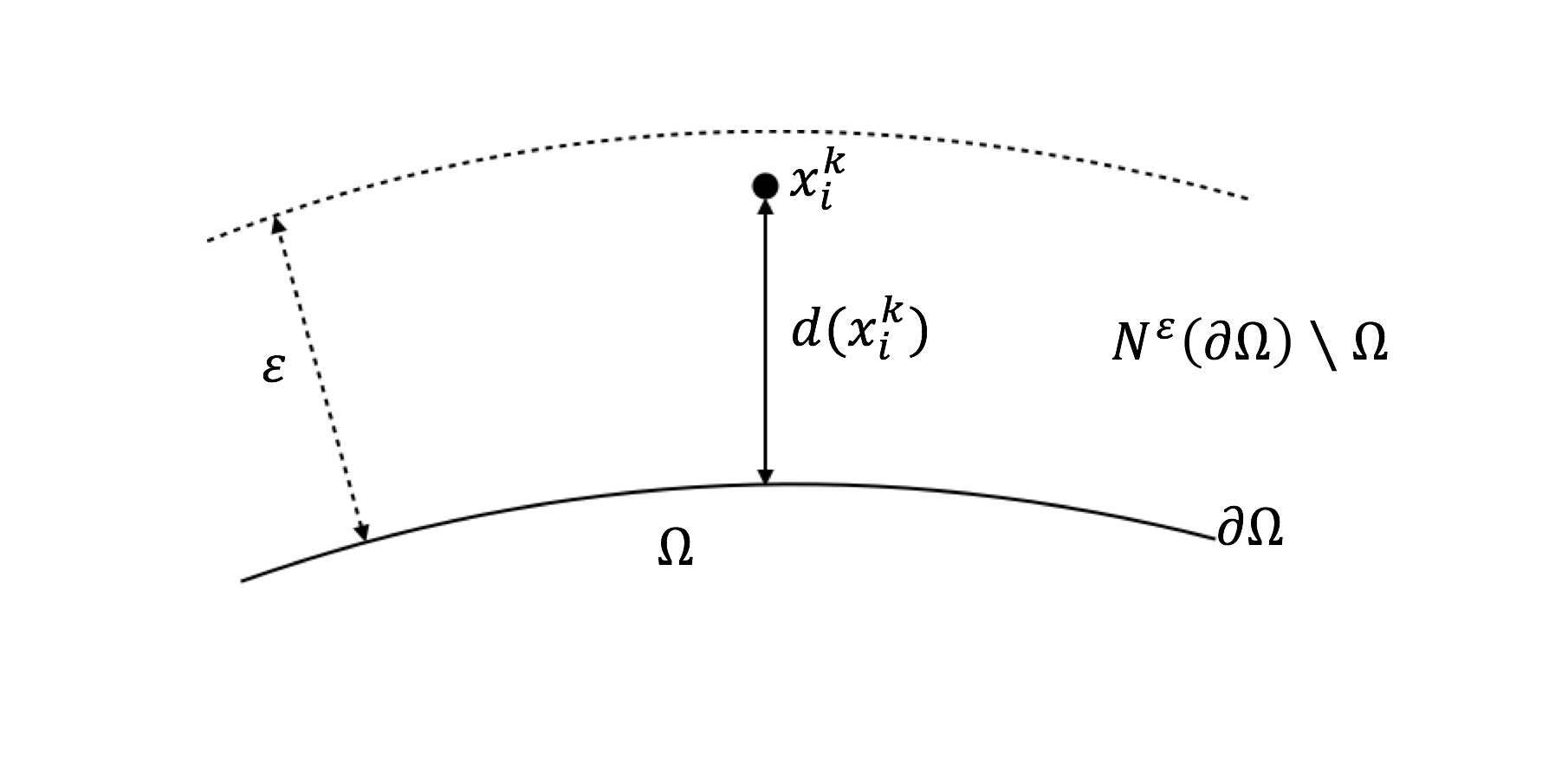}
      \end{center}
      \caption{A situation sketch for particle $x_i^k(t)$ of a possible solution to Problem \ref{problem2}.}
      \label{example}
 \end{figure}
 
 \begin{Lem}\label{Lemma1}
Let $T > 0$ and suppose that \eqref{1} holds and that $k>\frac{1}{\varepsilon}\|F\|_{\infty}$. Then for any $X^\circ \in G$, there exists a classical solution to Problem~\ref{problem2}. Furthermore, any solution $X^k(t)=\{x_i^k(t)\}$ to Problem~\ref{problem2} satisfies $x_i^k(t)\in N^{\varepsilon}(\Omega)$ and $|\dot{x}_i^k(t)|\le 2\|F\|_{\infty}$ for all $i=1,2,\dots,n$ and all $t \in [0, T]$.
 \end{Lem}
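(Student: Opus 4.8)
The plan is to obtain existence from the Cauchy--Peano theorem and to derive the two a priori bounds from a maximum-principle argument for the distance function $d$. Since only the continuity hypothesis \eqref{1} is assumed (not the Lipschitz condition \eqref{2}), I cannot invoke Picard--Lindel\"of, and no uniqueness is claimed. The key observation is that the velocity bound $|\dot x_i^k|\le 2\|F\|_{\infty}$ will follow immediately once the distance bound $d(x_i^k(t))\le\|F\|_{\infty}/k<\varepsilon$ is established, so the real work is controlling $\phi_i(t):=d(x_i^k(t))$.

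First I would address the fact that the penalty term $d\nabla d$ is, strictly speaking, only defined on $\overline{N^{\varepsilon}(\Omega)}$. Using Proposition~\ref{d} together with the elementary bound $|d\nabla d|=d\le\varepsilon$ on $\overline{N^{\varepsilon}(\Omega)}$, I would extend $d\nabla d$ to a bounded, Lipschitz (hence continuous) map $g:\R^m\to\R^m$ that coincides with $d\nabla d$ on $\overline{N^{\varepsilon}(\Omega)}$. The modified right-hand side $(t,X)\mapsto F_i(t,X)-k\,g(x_i)$ is then continuous and bounded on $[0,T]\times(\R^m)^n$, so Cauchy--Peano yields a solution on a maximal interval, and boundedness of the right-hand side excludes finite-time blow-up, giving a $C^1$ solution on all of $[0,T]$. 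It then remains to show this solution never leaves $\overline{N^{\varepsilon}(\Omega)}$, where $g=d\nabla d$, so that it genuinely solves Problem~\ref{problem2}; this is exactly the a priori distance bound below.

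For the a priori bound I would study $\phi_i(t)=d(x_i^k(t))$, for any solution of Problem~\ref{problem2} (in particular for the one just constructed). Since $x_i^{\circ}\in\overline{\Omega}$ we have $\phi_i(0)=0$, and wherever $\phi_i(t)>0$ the particle lies in $N^{\varepsilon}(\partial\Omega)\setminus\overline{\Omega}$, where $d\in C^2$ and $|\nabla d|=1$. There the chain rule and the equation give
\[
\dot\phi_i=\nabla d(x_i^k)\cdot\dot x_i^k=\nabla d\cdot F_i-k\,d(x_i^k)|\nabla d|^2\le\|F\|_{\infty}-k\,\phi_i ,
\]
using Cauchy--Schwarz on the first term. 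Writing $M:=\|F\|_{\infty}/k$, the hypothesis $k>\varepsilon^{-1}\|F\|_{\infty}$ gives $M<\varepsilon$, and on the open set $\{\phi_i>M\}$ this inequality forces $\dot\phi_i<0$. A first-crossing argument (if $\phi_i(t_2)>M$, set $t_1:=\sup\{t<t_2:\phi_i(t)\le M\}$, so $\phi_i(t_1)=M$ and $\dot\phi_i<0$ on $(t_1,t_2]$, whence $\phi_i(t_2)<M$, a contradiction) then shows $\phi_i(t)\le M<\varepsilon$ for all $t$. This gives $x_i^k(t)\in N^{\varepsilon}(\Omega)$, closing the existence argument, and
\[
|\dot x_i^k|\le|F_i|+k\,|d\nabla d(x_i^k)|=|F_i|+k\,d(x_i^k)\le\|F\|_{\infty}+kM=2\|F\|_{\infty}
\]
yields the velocity bound.

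The main obstacle is the coupling between existence and the a priori estimate caused by $d\nabla d$ being defined only near $\overline{\Omega}$: the extension $g$ and the continuation argument must be set up so that the distance bound can be invoked to confirm that the constructed trajectory remains in the region where $g$ agrees with $d\nabla d$. The differential-inequality step itself needs mild care, since $d$ is not differentiable on $\overline{\Omega}$ (where $\phi_i=0$) and the threshold estimate $\dot\phi_i\le\|F\|_{\infty}-kM$ is non-strict precisely at $\phi_i=M$; restricting the crossing argument to the open set $\{\phi_i>M\}$, where the inequality is strict, is what circumvents both difficulties.
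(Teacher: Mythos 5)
Your proposal is correct, and its skeleton matches the paper's: Cauchy--Peano for the penalized system of Problem~\ref{problem2}, an a priori bound $d(x_i^k(t))\le \|F\|_\infty/k<\varepsilon$, and the velocity bound $|\dot x_i^k|\le \|F\|_\infty + k\,d(x_i^k)\le 2\|F\|_\infty$ as an immediate consequence. The differences are in how the two halves are implemented. For existence, the paper does not extend the vector field: it takes the maximal solution inside $N^{\varepsilon}(\Omega)$ (existing until some $x_i^k$ reaches $\partial N^{\varepsilon}(\Omega)$ or the time exceeds $T$) and then rules out boundary contact; your globally defined bounded Lipschitz extension $g$ of $d\nabla d$, combined with exclusion of blow-up, is a clean alternative that makes the continuation step explicit, at the cost of having to check afterwards that the trajectory stays in the region where $g=d\nabla d$ --- which your distance bound does. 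For the distance bound itself, the paper takes an excursion interval with $x_i^k(t_0)\in\partial\Omega$ and $x_i^k(t)\notin\overline\Omega$ for $t_0<t<t_1$, and uses the integrating factor: $\frac{d}{dt}\bigl(e^{kt}d(x_i^k(t))\bigr)=e^{kt}\,\nabla d\cdot F_i\le \|F\|_\infty e^{kt}$, which after integration gives $d(x_i^k(t))\le \|F\|_\infty\bigl(1-e^{-k(t-t_0)}\bigr)/k\le\|F\|_\infty/k$; you instead run a barrier/first-crossing argument at the level $M=\|F\|_\infty/k$. These are two equivalent ways of exploiting the same differential inequality $\dot\phi_i\le\|F\|_\infty-k\phi_i$, and both yield exactly the paper's threshold. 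One small imprecision in your write-up: in the crossing argument you assert $\dot\phi_i<0$ on all of $(t_1,t_2]$, but your differential inequality is justified only while $\phi_i<\varepsilon$, since beyond that level $g$ need no longer coincide with $d\nabla d$ and $d$ need not be differentiable. This is harmless: immediately to the right of $t_1$ one has $M<\phi_i<\varepsilon$, and the strict decrease there already contradicts $\phi_i>M$, so the trajectory never gets the chance to reach the level $\varepsilon$; stating the argument as ``first crossing of $M$ forces an immediate contradiction'' closes this gap in one line.
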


 \begin{proof}
 Since $F_i$ and $d\nabla d$ are continuous in $\overline{N^{\varepsilon}(\Omega)}$ and $X^\circ \in N^{\varepsilon}(\Omega)$, it follows from the Cauchy-Peano theorem that there exists $T_0>0$ for which Problem~\ref{problem2} has a  classical solution $x_i^k(t)$ on $[0,T_0]$ such that $x_i^k(t)\in N^{\varepsilon}(\Omega)$ for all $0 \leq t < T_0$ and such that either $x_i^k(T_0)\in\partial N^{\varepsilon}(\Omega)$ for some $i\in \{1,2,\dots ,n\}$ or $T_0>T$. 
 
Next, we claim that $x_i^k(T_0)\in\partial N^{\varepsilon}(\Omega)$ is impossible, from which it follows that $T_0 > T$. To prove this claim, we suppose that $x_i^k(t)\in N^{\varepsilon}(\Omega)\setminus\overline{\Omega}~(0\le t_0<t<t_1\le T_0)$ and $x_i^k(t_0)\in\partial\Omega$ hold for some $i\in\{1,2,\dots,n\}$ and $[t_0,t_1]\subset [0,T_0]$. We first compute, for any $t\in (t_0,t_1)$,
  \begin{equation}
  \begin{aligned}\label{ddf}
    \frac{d}{dt}d(x_i^k(t))&= \nabla d(x_i^k(t))\cdot \big(F_i(t,X^k(t))-kd(x_i^k(t))\nabla d(x_i^k(t))\big)\\
    &=\nabla d(x_i^k(t))\cdot F_i(t,X^k(t))-kd(x_i^k(t)),
    \end{aligned}
  \end{equation}
and
  \[
  \begin{aligned}
    \frac{d}{dt}\big(e^{kt}d(x_i^k(t))\big) &=e^{kt}\Big(\frac{d}{dt}d(x_i^k(t))+kd(x_i^k(t))\Big)\\
    &=e^{kt}\big(\nabla d(x_i^k(t))\cdot F_i(t,X^k(t))\big)\\
    & \le |F_i(t,X^k(t))|e^{kt}\\
    & \le \|F\|_{\infty}e^{kt}.
    \end{aligned}
  \]
Integrating the above estimate from $t_0$ to $t\in (t_0,t_1]$, we obtain
  \[
  e^{kt}d(x_i^k(t))
  \le e^{kt_0}d(x_i^k(t_0))+\|F\|_{\infty}\int_{t_0}^te^{ks}ds
  = \|F\|_{\infty}\frac{e^{kt}-e^{kt_0}}{k},
  \]
  and thus
  \[
  d(x_i^k(t))\le \|F\|_{\infty}\frac{1-e^{-k(t-t_0)}}{k}\le \frac{\|F\|_\infty}{k} < \varepsilon.
  \]
  Hence, $x_i^k(t) \in N^{\varepsilon}(\Omega)$ for all $i\in \{1,2,\dots,n\}$ and $t\in [0,T_0]$.
  
Finally, we prove the bound on the velocity. By the computation above, $x_i^k(t)\in N^{\varepsilon}(\Omega)$ for all $t \in [0,T]$, and
  \[
  \begin{aligned}
    |\dot{x}_i^k(t)|&=|F_i(t,X^k(t))-k(d\nabla d)(x_i^k(t))|
    \le \|F\|_{\infty}+ k\frac{\|F\|_{\infty}}{k} \big| \nabla d (x_i^k(t)) \big|
    =2\|F\|_{\infty}.
    \end{aligned}
  \]
  \end{proof}


\subsection{Existence of solutions}
\label{Existence}

In this section, we prove the existence statement of Theorem \ref{theorem1}. Let $T > 0$ and the initial condition $X^{\circ}=\{x_i^{\circ}\}_{i=1}^n \in G$ be given. Let $k_0 \in \N$ with $k_0 > \frac1\varepsilon \|F\|_{\infty}$, and $X^k (t)$ be the corresponding solution to Problem \ref{problem2} for all $k \geq k_0$. We use limit points of the sequence $X^k$ as candidates for solutions of Problem \ref{problem1}. With this aim, we first specify the topology in which we seek such limit points (Lemma \ref{lemmaX}).

\begin{Lem}\label{lemmaX}
 There exists a subsequence of $\{X^k\}_{k=k_0}^{\infty} \subset C^1([0,T]; (\R^m)^n)$~(not relabelled) and $X=\{x_i(\cdot)\}_{i=1}^n\in W^{1,\infty}(0,T;(\R^m)^n)\cap C^0([0,T];G)$ such that
  \begin{equation}\label{weak}
  \left\{
  \begin{aligned}
    & x_i^k\to x_i\qquad\text{strongly in}~C^0([0,T];\R^m),\\
    & \dot{x}_i^k\rightharpoonup \dot{x}_i\qquad\text{weakly-$\ast$ in}~L^{\infty}(0,T;\R^m),
    \end{aligned}
  \right.
  \end{equation}
  for all $i=1,\dots,n$ as $k\to\infty$.
\end{Lem}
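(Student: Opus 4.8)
The plan is to obtain the limit $X$ as a subsequential limit of the approximate solutions $\{X^k\}$ by a standard Ascoli--Arzel\`a compactness argument, using the uniform bounds established in Lemma~\ref{Lemma1}. First I would record the two estimates that Lemma~\ref{Lemma1} hands us for every $k \ge k_0$ and every $i$: the velocity bound $|\dot{x}_i^k(t)| \le 2\|F\|_\infty$ for all $t \in [0,T]$, and the confinement $x_i^k(t) \in N^\varepsilon(\Omega)$. The velocity bound immediately gives a uniform $W^{1,\infty}$ bound on each component $x_i^k$, hence a uniform Lipschitz bound with constant $2\|F\|_\infty$; combined with the fact that all trajectories start at the same fixed point $x_i^\circ$, this yields a uniform sup-norm bound as well. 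Thus $\{x_i^k\}_k$ is bounded and equicontinuous in $C^0([0,T];\R^m)$.

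The heart of the argument is then purely an application of the two recalled theorems. For the strong $C^0$-convergence I would invoke Ascoli--Arzel\`a (Theorem~\ref{AA}) on the compact metric space $[0,T]$: the uniform boundedness and equicontinuity just established produce a uniformly convergent subsequence of $\{x_1^k\}$; passing to a further subsequence for $\{x_2^k\}$, and so on through $i = 1,\dots,n$, gives after finitely many diagonal extractions a single (not relabelled) subsequence along which $x_i^k \to x_i$ uniformly for every $i$ simultaneously. For the weak-$\ast$ convergence of the velocities, I would use the uniform bound $\|\dot{x}_i^k\|_{L^\infty(0,T)} \le 2\|F\|_\infty$ together with the sequential Banach--Alaoglu (weak-$\ast$) compactness of bounded sets in $L^\infty(0,T;\R^m) = \big(L^1(0,T;\R^m)\big)^*$; extracting a further subsequence gives $\dot{x}_i^k \rightharpoonup g_i$ weakly-$\ast$ for some $g_i \in L^\infty$.

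It remains to identify the weak-$\ast$ limit $g_i$ as $\dot{x}_i$ and to verify the claimed membership $X \in W^{1,\infty} \cap C^0([0,T];G)$. For the identification I would pass to the limit in the integral identity $x_i^k(t) = x_i^\circ + \int_0^t \dot{x}_i^k(s)\,ds$: the left side converges pointwise by the uniform convergence, while the right side converges because testing weak-$\ast$ convergence against the $L^1$ function $\mathbf{1}_{[0,t]}$ gives $\int_0^t \dot{x}_i^k \to \int_0^t g_i$; hence $x_i(t) = x_i^\circ + \int_0^t g_i(s)\,ds$, which shows $x_i \in W^{1,\infty}$ with $\dot{x}_i = g_i$ a.e. Finally, the uniform confinement $x_i^k(t) \in N^\varepsilon(\Omega)$ passes to the limit under uniform convergence, but this only yields $x_i(t) \in \overline{N^\varepsilon(\Omega)}$, not the desired $x_i(t) \in \overline\Omega$ required for $X \in C^0([0,T];G)$. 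I expect this last point to be the main obstacle: the estimate $d(x_i^k(t)) \le \|F\|_\infty/k$ from the proof of Lemma~\ref{Lemma1} is in fact the key, since it forces $d(x_i(t)) = \lim_k d(x_i^k(t)) = 0$ by continuity of the distance function $d$, whence $x_i(t) \in \overline\Omega$ and thus $X(t) \in G$ for every $t$. I would therefore make sure to carry this quantitative penalisation estimate into the limit rather than merely the qualitative inclusion $x_i^k(t) \in N^\varepsilon(\Omega)$.
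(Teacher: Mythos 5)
Your proposal is correct and follows essentially the same route as the paper: Ascoli--Arzel\`a for the uniform convergence of the positions, weak-$\ast$ compactness of bounded sets in $L^\infty(0,T;\R^m)$ for the velocities, and identification of the weak-$\ast$ limit with $\dot{x}_i$ (the paper does this by integrating by parts against $\psi\in C_0^\infty$, you by testing against $\mathbf{1}_{[0,t]}$ in the integral identity --- an equivalent step). Your closing observation, that one must carry the quantitative penalisation bound $d(x_i^k(t))\le \|F\|_\infty/k$ from the proof of Lemma~\ref{Lemma1} into the limit to conclude $x_i(t)\in\overline\Omega$, addresses a detail the paper's proof leaves implicit, so your verification of the membership $X\in C^0([0,T];G)$ is, if anything, more complete than the original.
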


\begin{proof}
  From Lemma~\ref{Lemma1} we observe that $\{X^k\}_k$ is a bounded sequence in $C^0([0,T];\R^m)$ and equicontinuous on $[0,T]$. From Ascoli-Arzela's Theorem (Theorem~\ref{AA}), there exists a uniformly convergent subsequence $X^{k_l}$ of $X^k$. We define $X$ as its limit as $l \to \infty$, which completes the proof of the first of the two convergence statements in \eqref{weak}. Next we fix $i \in \{1, \ldots, n\}$. From the uniform bound on $\max_{0 \leq t \leq T} |\dot{x}_i^k(t)|$, which is guaranteed by Lemma~\ref{Lemma1}, there exists a subsequence of $\{X^{k_l}(t)\}_l$ (not relabeled) and $y_i\in L^{\infty}(0,T;\R^m)$ such that $\dot{x}_i^{k_l}\rightharpoonup y_i$ in $L^{\infty}(0,T;\R^m)$ as $l \to \infty$. We characterise $y_i$ as $\dot{x}_i$ by
  \[
  \begin{aligned}
  \int_0^Ty_i(t)\cdot \psi(t)dt &= \lim_{l\to\infty}\int_0^T \dot{x}_i^{k_l}(t)\cdot\psi(t)dt=-\lim_{l\to\infty}\int_0^T x_i^{k_l}(t)\cdot\psi '(t)dt\\
  &=-\int_0^T x_i(t)\cdot\psi '(t)dt=\int_0^T \dot{x}_i(t)\cdot\psi(t)dt.
  \end{aligned}
  \]
  \end{proof}

Let $X$ be given by Lemma \ref{lemmaX}. In the remainder of the proof we pass to the limit $k \to \infty$ in the weak form of \eqref{dotx} in order to show that $X$ satisfies the second of the three conditions in Proposition \ref{prop:23}. Let $\psi\in C_0^{\infty}((0,T);\R^m)$ be a test function. To simplify notation, we define $F_i^k(t):=F_i(t,X^k(t))$ and $F_i(t):=F_i(t,X(t))$. We fix $i\in \{1,\dots,n\}$ and define the open set $I_k:=\{t\in[0,T]~;~x_i^k(t)\notin \overline{\Omega}\}$. Starting from the weak form of \eqref{dotx}, we use \eqref{ddf} to obtain
  \begin{equation}\label{ss}
  \begin{aligned}
   \int_0^T(\dot{x}_i^k(t)-F_i^k(t))\cdot\psi(t)dt &=-\int_0^T k(d\nabla d)(x_i^k(t))\cdot \psi(t)dt\\
   & =-\int_{I_k} kd(x_i^k(t))\nabla d(x_i^k)\cdot\psi(t)dt\\
   & =\int_{I_k}\big(\frac{d}{dt}d(x_i^k(t))-\nabla d(x_i^k(t))\cdot F_i^k(t)\big)\nabla d(x_i^k)\cdot\psi(t)dt\\
   & = A_k-B_k,
    \end{aligned}
  \end{equation}
  where  
  \[
  \begin{aligned}
  & A_k=\int_{I_k}\big(\frac{d}{dt}d(x_i^k(t))\big)\nabla d(x_i^k(t))\cdot\psi(t)dt,\\
  & B_k=\int_{I_k}\big(\nabla d(x_i^k(t))\cdot F_i^k(t)\big)\big(\nabla d(x_i^k(t))\cdot \psi(t)\big)dt.
  \end{aligned}
  \]
  
Using Lemma \ref{lemmaX} it is easy to pass to the limit $k \to \infty$ in the left-hand side of \eqref{ss}. Passing to the limit in the right hand side requires more care. We start by proving that
\begin{equation} \label{lemmaA}
  \lim_{k\to\infty}A_k=0.
\end{equation}
With this aim, we decompose $I_k=\bigcup_{\lambda=1}^{\infty}Q_{\lambda}$, where $Q_{\lambda}$ are open connected components of $I_k$. For any $\lambda$, we calculate
\[
  \begin{aligned}
    \left|\int_{Q_{\lambda}}\left(\frac{d}{dt}d(x_i^k(t))\right)\nabla d(x_i^k(t))\cdot\psi(t) dt\right|&=\left|-\int_{Q_{\lambda}}d(x_i^k(t))\frac{d}{dt}\big(\nabla d(x_i^k(t))\cdot\psi(t)\big)dt\right|\\
    &=\left|-\int_{Q_{\lambda}}\underbrace{d(x_i^k(t))}_{\le\frac{\|F\|_{\infty}}{k}}\underbrace{[\big((\nabla^2d(x_i^k(t))\dot{x}_i^k(t)\big)\cdot \psi(t)+\nabla d(x_i^k(t))\cdot\dot{\psi}(t)]}_{\le C}dt\right|\\
    &\le |Q_{\lambda}|\frac{C\|F\|_{\infty}}{k}.
    \end{aligned}
\]
Since $I_k=\bigcup_{\lambda=1}^{\infty}Q_{\lambda}\subset (0,T)$, we obtain
\[
  |A_k|\le \sum_{\lambda=1}^{\infty} C\|F\|_{\infty}\frac{|Q_{\lambda}|}{k}\le\frac{CT\|F\|_{\infty}}{k}\to 0\quad(k\to\infty),
\]
which proves \eqref{lemmaA}.

Next we prepare for passing to the limit in $B_k$ as $k \to \infty$. We define the indicator functions as
\[
\begin{aligned}
   \chi_k(t) &=
  \left\{
  \begin{aligned}
    & 1\qquad x_i^k(t)\notin\overline{\Omega}\\
    & 0\qquad x_i^k(t)\in\overline{\Omega},
  \end{aligned}
  \right.\\
   \chi_k^*(t) &=
  \left\{
  \begin{aligned}
    & 1\qquad x_i^k(t)\notin\overline{\Omega},~ \nabla d_s(x_i^k(t))\cdot F_i^k(t)>0\\
    & 0\qquad \text{otherwise,}
  \end{aligned}
  \right.\\
  \chi_{\partial\Omega}(t) &=
  \left\{
  \begin{aligned}
    & 1\qquad x_i(t)\in\partial\Omega\\
    & 0\qquad x_i(t)\in\Omega.
  \end{aligned}
  \right.
\end{aligned}
\]
We further set
 \begin{align}\label{g}
 g_k(t) &:= \nabla d_s^{\varepsilon}(x_i^k(t))\cdot F_i^k(t), 
 & h_k(t) &:= \nabla d_s^{\varepsilon}(x_i^k(t))\cdot \psi(t), 
   \\\label{h}
 g(t) &:= \nabla d_s^{\varepsilon}(x_i(t))\cdot F_i(t), 
 & h(t) &:= \nabla d_s^{\varepsilon}(x_i(t))\cdot \psi(t), 
 \end{align}
and observe that
 \[
 \lim_{k\to\infty}\|g_k-g\|_{C^0 ([0,T]) } = 0, \quad \lim_{k\to\infty}\|h_k-h\|_{C^0 ([0,T]) } =0.
 \]

\begin{Lem}\label{case4}
For all $t\in (0,T]$, there holds
$
\chi_{\partial\Omega}(t)\big(g(t)\big)_-=0.
$
\end{Lem}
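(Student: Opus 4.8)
The plan is to reduce the statement to the penalised dynamics of Problem~\ref{problem2}, using the signed distance $d_s^\varepsilon$ as a monotone test quantity. Fix $i$ and a time $t_0 \in (0,T]$. If $x_i(t_0) \in \Omega$ then $\chi_{\partial\Omega}(t_0) = 0$ and there is nothing to prove, so I may assume $x_i(t_0) \in \partial\Omega$, and it suffices to show $g(t_0) \ge 0$, since then $(g(t_0))_- = 0$. The geometric idea is that a strictly inward force at the contact time $t_0$ would push the approximating particles $x_i^k$ into the interior of $\Omega$ on a short time interval ending at $t_0$, so their uniform limit $x_i$ could not sit exactly on $\partial\Omega$ at $t_0$.

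The first key step is the one-sided differential inequality
\[
\frac{d}{dt} d_s^\varepsilon\big(x_i^k(t)\big) \le g_k(t) \qquad (0 < t < T, \ k \ge k_0).
\]
I would derive this by differentiating $t \mapsto d_s^\varepsilon(x_i^k(t))$ (admissible since $d_s^\varepsilon \in C^2(\R^m)$ and $x_i^k \in C^1$), inserting \eqref{dotx}, and splitting according to the definition \eqref{df} of $d\nabla d$. On $\{x_i^k(t) \in \overline\Omega\}$ the penalisation term vanishes and the derivative equals $g_k(t)$ exactly; on $\{x_i^k(t) \notin \overline\Omega\}$ one uses that $\nabla d_s^\varepsilon = \nabla d$ there (with $|\nabla d| = 1$), so the derivative equals $g_k(t) - k\, d(x_i^k(t)) \le g_k(t)$. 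Here it is important to recall from Lemma~\ref{Lemma1} that $x_i^k(t) \in N^\varepsilon(\Omega)$, which legitimises identifying $\nabla d_s^\varepsilon$ with $\nabla d$ on the exterior region.

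The second step is to integrate this inequality over $[t_0 - \delta, t_0]$ for a fixed $\delta \in (0, t_0)$ and let $k \to \infty$. On the left, $d_s^\varepsilon(x_i^k(t_0)) \to d_s^\varepsilon(x_i(t_0)) = 0$ by the uniform convergence of Lemma~\ref{lemmaX} together with $x_i(t_0) \in \partial\Omega$, while $d_s^\varepsilon(x_i^k(t_0-\delta)) \le \|F\|_\infty/k \to 0$ (using $d_s^\varepsilon \le d$ on $N^\varepsilon(\Omega)$ and the bound $d(x_i^k) \le \|F\|_\infty/k$ established in the proof of Lemma~\ref{Lemma1}). On the right, $\int_{t_0-\delta}^{t_0} g_k \to \int_{t_0-\delta}^{t_0} g$ by the uniform convergence $g_k \to g$. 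This yields $\int_{t_0-\delta}^{t_0} g(t)\,dt \ge 0$ for every $\delta \in (0,t_0)$; dividing by $\delta$ and sending $\delta \to 0^+$ gives $g(t_0) \ge 0$ by continuity of $g$.

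The main obstacle is the first step: extracting the clean inequality requires carefully matching $\nabla d_s^\varepsilon$ with $\nabla d$ on the exterior layer and exploiting the favourable sign of the penalisation, while keeping $x_i^k$ confined to $N^\varepsilon(\Omega)$ so these identifications hold. I also expect the restriction $t_0 > 0$ to be essential rather than cosmetic: the argument needs a nondegenerate interval $[t_0-\delta, t_0]$ to the left of $t_0$ over which to integrate backward, whereas at $t_0 = 0$ the particle may legitimately start on $\partial\Omega$ with an inward force, which is precisely why the statement is phrased for $t \in (0,T]$.
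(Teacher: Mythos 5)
Your proof is correct and follows essentially the same route as the paper: both hinge on the identity $\frac{d}{dt}d_s^{\varepsilon}(x_i^k(t)) = g_k(t) - k\,d(x_i^k(t)) \le g_k(t)$ for the penalised dynamics, integration over $[t_0-\delta,t_0]$, and the limit $k\to\infty$ using the uniform convergences $x_i^k \to x_i$ and $g_k \to g$. The only difference is presentational: the paper argues by contradiction (assuming $g(t_0)=-\alpha<0$ and deriving $d_s^{\varepsilon}(x_i(t_0)) \le -\alpha\delta/4$, contradicting $x_i(t_0)\in\partial\Omega$), whereas you integrate the inequality directly and recover $g(t_0)\ge 0$ by letting $\delta\to 0^+$.
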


\begin{proof}
We prove the assertion by the argument of contradiction. Suppose that the assertion is false. Then, there exists $t_0\in (0,T]$ such that $x_i(t_0)\in\partial\Omega$ and $\alpha:=-g(t_0)>0$ hold. By the continuity of $g$, there exists $\delta\in (0,t_0)$ such that $g(t)\le -\frac{\alpha}{2}$ for $t_0-\delta\le t\le t_0$. We choose $k_1\ge k_0$ such that $g_k(t)\le -\frac{\alpha}{4}$ holds for all $k\ge k_1$ and all $t_0-\delta\le t\le t_0$. Since
  \[
  \begin{aligned}
  \frac{d}{dt}d_s^{\varepsilon}(x_i^k(t))&=\nabla d_s^{\varepsilon}(x_i^k(t))\cdot \dot{x}_i^k(t)\\
  &=\nabla d_s^{\varepsilon}(x_i^k(t))\cdot\big(F_i^k(t)-kd(x_i^k(t))\nabla d_s^{\varepsilon}(x_i^k(t))\big)\\
  &=g_k(t)-kd(x_i^k(t)),
  \end{aligned}
  \]
  we get
  \[
  \begin{aligned}
      d_s^{\varepsilon}(x_i^k(t_0))&=d_s^{\varepsilon}(x_i^k(t_0-\delta))+\int_{t_0-\delta}^{t_0}\big(g_k(s)-kd(x_i^k(s))\big)ds\\
      &\le d_s^{\varepsilon}(x_i^k(t_0-\delta))-\frac{\alpha}{4}\delta.
  \end{aligned}
  \]
Passing to the limit $k\to\infty$, we obtain $d_s^{\varepsilon}(x_i(t_0))\le d_s^{\varepsilon}(x_i(t_0-\delta))-\frac{\alpha\delta}{4}\le -\frac{\alpha\delta}{4}$, which contradicts $x_i(t_0)\in\partial\Omega$.
\end{proof}

Using the preparations above, we pass to the limit in $B_k$ as $k \to \infty$ in the next lemma.

\begin{Lem}\label{lemmaB}
 There exists $\overline{\chi}\in L^{\infty}(0,T;[0,1])$ such that
  \[
 \lim_{k\to\infty} B_k=\int_0^T\big(\nu(x_i(t))\cdot F_i(t)\big)_+\big(\nu(x_i(t))\cdot\psi(t)\big)\overline{\chi}(t)\chi_{\partial\Omega}(t)dt 
 \]
 \end{Lem}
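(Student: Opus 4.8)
The plan is to first rewrite $B_k$ in terms of the quantities $g_k,h_k$ introduced in \eqref{g}, then extract a weak-$\ast$ limit of the indicator functions $\chi_k$, and finally identify that limit using Lemma~\ref{case4}.

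First I would simplify the integrand of $B_k$. On $I_k$ we have $x_i^k(t)\notin\overline\Omega$, while Lemma~\ref{Lemma1} guarantees $x_i^k(t)\in N^{\varepsilon}(\Omega)$; hence $x_i^k(t)\in N^{\varepsilon}(\partial\Omega)\setminus\overline\Omega$, where $d=d_s$ and therefore $\nabla d(x_i^k(t))=\nabla d_s(x_i^k(t))=\nabla d_s^{\varepsilon}(x_i^k(t))$. Consequently the integrand of $B_k$ equals $g_k(t)h_k(t)$ on $I_k$, and since $\chi_k$ is exactly the indicator of $I_k$,
\[
B_k=\int_0^T\chi_k(t)\,g_k(t)\,h_k(t)\,dt.
\]

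Next I would pass to the limit. Since $0\le\chi_k\le1$, the sequence $\{\chi_k\}$ is bounded in $L^{\infty}(0,T)$, so by Banach--Alaoglu there is a subsequence (not relabelled) and $\overline\chi\in L^{\infty}(0,T;[0,1])$ with $\chi_k\rightharpoonup\overline\chi$ weakly-$\ast$. Because $g_kh_k\to gh$ uniformly on $[0,T]$ (from the stated $C^0$-convergences following \eqref{h}) while $|\chi_k|\le1$, splitting $\chi_kg_kh_k=\chi_k gh+\chi_k(g_kh_k-gh)$ shows the second term is $O(\|g_kh_k-gh\|_{C^0})\to0$ and the first converges by weak-$\ast$ convergence tested against $gh\in L^1$. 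Hence $\lim_k B_k=\int_0^T\overline\chi\,g\,h\,dt$. That the full sequence converges, and not merely this subsequence, follows a posteriori from \eqref{ss}, \eqref{lemmaA} and Lemma~\ref{lemmaX}, which already force $B_k$ to converge.

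It remains to identify $\int_0^T\overline\chi gh\,dt$ with the claimed expression, and this is the step I expect to be most delicate, precisely because $\chi_k$ converges only weakly and $\overline\chi$ cannot be computed explicitly. I would pin $\overline\chi$ down through two observations. First, on the open set $\{t:x_i(t)\in\Omega\}$ the limit $\overline\chi$ vanishes a.e.: on any compact subset $K$ of it, $x_i(K)$ has positive distance to $\partial\Omega$, so the uniform convergence $x_i^k\to x_i$ from Lemma~\ref{lemmaX} forces $x_i^k(t)\in\Omega$, i.e.\ $\chi_k(t)=0$, for $t\in K$ and $k$ large; testing against nonnegative $L^1$ functions supported in $K$ and exhausting gives $\overline\chi=0$ a.e.\ there, whence $\overline\chi=\overline\chi\,\chi_{\partial\Omega}$. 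Second, on $\{x_i(t)\in\partial\Omega\}$ the projection identity $\nabla d_s^{\varepsilon}(x_i(t))=\nu(x_i(t))$ turns $g$ and $h$ into $\nu(x_i)\cdot F_i$ and $\nu(x_i)\cdot\psi$, while Lemma~\ref{case4} gives $(g)_-=0$ there, so $g=(g)_+=\big(\nu(x_i)\cdot F_i\big)_+$. Combining these, $\overline\chi gh=\overline\chi\,\chi_{\partial\Omega}\big(\nu\cdot F_i\big)_+\big(\nu\cdot\psi\big)$ a.e., which yields the asserted limit. The crux is that Lemma~\ref{case4} is exactly what supplies the one-sided (positive-part) structure that the weak limit $\overline\chi$ alone does not record.
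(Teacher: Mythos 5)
Your proof is correct, and it takes a genuinely different route from the paper's. The paper introduces the auxiliary indicator $\chi_k^*$ (recording both $x_i^k(t)\notin\overline{\Omega}$ and the sign condition $\nabla d_s(x_i^k(t))\cdot F_i^k(t)>0$), splits $B_k-C_k=\int_0^T\rho_k(t)\,dt$ with $\rho_k=g_kh_k(\chi_k-\chi_k^*\chi_{\partial\Omega})$ and $C_k=\int_0^T g_kh_k\chi_k^*\chi_{\partial\Omega}\,dt$, proves $\rho_k\to0$ pointwise by a case analysis in which Lemma~\ref{case4} excludes the case $x_i(t_0)\in\partial\Omega$, $g(t_0)<0$, invokes dominated convergence, and only then takes a weak-$\ast$ limit of $\chi_k^*$ to handle $C_k$. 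You instead take the weak-$\ast$ limit of $\chi_k$ itself and identify it afterwards: your localization argument shows $\overline{\chi}=0$ a.e.\ on $\{t:x_i(t)\in\Omega\}$ (a step the paper does not need, since $\chi_{\partial\Omega}$ is built into $C_k$ from the start), and Lemma~\ref{case4} enters exactly once, to rewrite $g$ as $\big(\nu(x_i)\cdot F_i\big)_+$ on $\{x_i\in\partial\Omega\}$. Both arguments rest on the same three ingredients (uniform convergence $g_kh_k\to gh$, weak-$\ast$ compactness of indicator functions, and Lemma~\ref{case4}), but yours dispenses with $\chi_k^*$ and the pointwise-plus-dominated-convergence step, which makes it shorter; the price is that your $\overline{\chi}$ is a limit of $\chi_k$ rather than of $\chi_k^*$, so it is a different function than the paper's. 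This is harmless: the lemma only asserts existence of some $\overline{\chi}\in L^{\infty}(0,T;[0,1])$, and the downstream co-area argument \eqref{b}--\eqref{c} uses nothing about $\overline{\chi}$ beyond that. Finally, your a posteriori remark that \eqref{ss}, \eqref{lemmaA} and Lemma~\ref{lemmaX} force the full sequence $B_k$ to converge actually tidies up a point the paper leaves implicit, since the paper also passes to an unrelabelled subsequence when extracting the weak-$\ast$ limit.
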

 
 \begin{proof}
 We define
 \[
 \rho_k(t):=g_k(t)h_k(t)(\chi_k(t)-\chi_k^*(t)\chi_{\partial\Omega}(t))
 \quad \text{and} \quad 
 C_k:=\int_0^Tg_k(t)h_k(t)\chi_k^*(t)\chi_{\partial\Omega}(t)dt
 \]
 such that we can split $B_k$ as
 \[
 B_k-C_k=\int_0^T\rho_k(t)dt.
 \]
 It is easy to see that there exists $C>0$ such that $|\rho_k(t)|\le C$ for $k\ge k_0$ and $t\in [0,T]$. 
 
 We claim that 
 \begin{equation}\label{**}
 \lim_{k\to\infty}\rho_k(t)=0\quad(t\in [0,T]).
 \end{equation}
 From this claim and the Dominated Convergence Theorem, we then obtain
 \begin{equation}\label{*}
 \lim_{k\to\infty}(B_k-C_k)=0.
 \end{equation}
 Next we prove the claim in \eqref{**}. We fix an arbitrary $t_0\in(0,T)$. By Lemma~\ref{case4} it is enough to consider the following three cases:
\begin{description}
\item case~1: $x_i(t_0)\in\Omega$. There exists $k_1\ge k_0$ such that for any $k\ge k_1$ there holds $x_i^k(t_0)\in\Omega$. For any such $k$, we obtain $\chi_k(t_0)=0$ and $\chi_k^*(t_0)=0$, and thus $\rho_k(t_0)=0$.
  \smallskip
\item case~2: $x_i(t_0)\in\partial\Omega$ and $g(t_0)=0$. Since $\lim_{k\to\infty}g_k(t_0)=g(t_0)=0$, we obtain $\lim_{k\to\infty}\rho_k(t_0)=0$.
  \smallskip
\item case~3: $x_i(t_0)\in\partial\Omega$ and $g(t_0)>0$. There exists $k_1\ge k_0$ such that $g_k(t_0)>0$ for all $k\ge k_1$. Let such a $k$ be fixed.
If $x_i^k(t_0)\notin\overline{\Omega}$, then since $\chi_k(t_0)=\chi_k^*(t_0)=\chi_{\partial\Omega}(t_0)=1$, we obtain $\rho_k(t_0)=0$. Otherwise, if $x_i^k(t_0)\in\overline{\Omega}$, then $\chi_k(t_0)=\chi_k^*(t_0)=0$ and thus $\rho_k(t_0)=0$.
  \smallskip
\end{description}
Together, cases 1-3 imply \eqref{**}. 

In the final step we pass to the limit in $C_k$ as $k \to \infty$. Since $\chi_k^*(t)\in\{0,1\}$, there exists a subsequence of $\{\chi_k^*\}_k$~(not relabelled) and $\overline{\chi}\in L^{\infty}(0,T;[0,1])$ such that $\chi_k^*\rightharpoonup \overline{\chi}$ weakly-$*$ in $L^{\infty}(0,T)$ as $k\to\infty$. Then, from \eqref{g} and \eqref{h} we obtain
\[
\lim_{k\to\infty}C_k=\int_0^Tg(t)h(t)\overline{\chi}(t)\chi_{\partial\Omega}(t)dt.
\]
From this and \eqref{*}, Lemma \ref{lemmaB} follows.
\end{proof}

Using \eqref{lemmaA} and Lemma \ref{lemmaB}, we pass to the limit $k \to \infty$ in \eqref{ss}. This yields
\begin{equation}\label{eqweak2}
\int_0^T\dot{x_i}(t)\cdot\psi(t)dt=\int_0^TF_i(t)\cdot \psi(t)dt-\int_0^Tg(t)h(t)\overline{\chi}(t)\chi_{\partial\Omega}(t)dt.
\end{equation}
Since $\psi\in C_0^{\infty}((0,T);\R^m)$ is arbitrary, this implies that
\begin{equation}\label{a}
\dot{x}_i(t)=F_i(t)-g(t)\overline{\chi}(t)\chi_{\partial\Omega}(t)\nu(x_i(t)) \quad \text{for a.e.}~t\in(0,T).
\end{equation}

In the final step we show that \eqref{a} is equivalent to the second of the three conditions in Proposition \ref{prop:23}. 
We put $A:=\{t\in [0,T]~;~x_i(t)\in\partial\Omega\}$. Then, for a.e.~$t\in A$, we obtain
\[
\begin{aligned}
\frac{d}{dt}d_s^{\varepsilon}(x_i(t))&=\nabla d_s^{\varepsilon}(x_i(t))\cdot\dot{x}_i(t)\\
& =\nu(x_i(t))\cdot\{F_i(t)-g(t)\overline{\chi}(t)\chi_{\partial\Omega}(t)\nu(x_i(t))\}\\
&=g(t)(1-\overline{\chi}(t)).
\end{aligned}
\]
Applying the co-area formula~(Theorem~\ref{CA}) to $A$ and $d_s^{\varepsilon}\circ x_i(t)$, we obtain
\begin{equation}\label{b}
\begin{aligned}
\int_0^T|g(t)(1-\overline{\chi}(t))|\chi_{\partial\Omega}(t)dt&=\int_A |g(t)(1-\overline{\chi}(t))|dt\\
&=\int_A\big|\frac{d}{dt}(d_s^{\varepsilon}\circ x_i)(t)\big|dt\\
&=\int_{\R}\mathcal{H}^0(A\cap (d_s^{\varepsilon}\circ x_i)^{-1}(y))dy\\
&=0.
\end{aligned}
\end{equation}
In the last equality we have used that the integrand is $0$ a.e.~(in fact, the integrand is $0$ for any $y\neq 0$).
From \eqref{b}, we obtain
\begin{equation}\label{c}
g(t)\overline{\chi}(t)\chi_{\partial\Omega}(t)=g(t)\chi_{\partial\Omega}(t)\quad\text{for a.e.~}t\in [0,T].
\end{equation}
From \eqref{a}, using \eqref{c} and Lemma~\ref{case4}, we obtain
\[
\dot{x}_i(t)=F_i(t)-\big(\nu(x_i(t))\cdot F_i(t)\big)_+\nu(x_i(t))\chi_{\partial\Omega}(t) = H_i(t, X(t)) \quad\text{a.e.~}t\in [0,T].
\]
We conclude by applying Proposition \ref{prop:23}.

 \subsection{Uniqueness of the solution}
\label{Uniqueness}
 In this section, we prove the uniqueness statement of Theorem \ref{theorem1}. We suppose that $F_i$ satisfies the Lipschitz condition~\eqref{2}. Let $T > 0$ and $X^\circ \in G$ be given, and let $X(t)=\{x_i(t)\}_{i=1}^n$ and $Y(t)=\{y_i(t)\}_{i=1}^n$ be two mild solutions of Problem \ref{problem1} subject to the initial condition $X^\circ$. 
 
We prove uniqueness by applying Gronwall's lemma. With this aim (recalling that $X, Y$ are Lipschitz continuous by Proposition \ref{prop:23}) we set
\[ J_i(t):=\frac{d}{dt}\Big(\frac{1}{2}|x_i(t)-y_i(t)|^2\Big) \]
for $i\in\{1,2,\dots,n\}$ and a.e.~$t\in (0,T)$. The following lemma provides a sufficient bound on $J_i(t)$.

\begin{Lem}\label{lemmaU}
  There exists a constant $C > 0$ independent of $X$, $Y$ 
  such that
  \[
  J_i(t) \le C\|X(t)-Y(t)\|^2 \quad(i\in\{1,2,\dots,n\},~\text{a.e.~}t\in[0,T]).
  \]
\end{Lem}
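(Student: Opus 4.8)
The plan is to use Proposition~\ref{prop:23} to write $\dot x_i(t)=H_i(t,X(t))$ and $\dot y_i(t)=H_i(t,Y(t))$ for a.e.\ $t\in(0,T)$, so that
\[
J_i(t)=\big(x_i(t)-y_i(t)\big)\cdot\big(H_i(t,X(t))-H_i(t,Y(t))\big).
\]
Recalling the definition \eqref{P}, for any admissible array $Z=\{z_j\}\in G$ I can write $H_i(t,Z)=P(z_i,F_i(t,Z))=F_i(t,Z)-R(z_i,F_i(t,Z))$, where $R(x,f):=(f\cdot\nu(x))_+\,\nu(x)$ when $x\in\partial\Omega$ and $R(x,f):=0$ otherwise. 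This decomposes $J_i(t)$ into a force part $(x_i-y_i)\cdot(F_i(t,X)-F_i(t,Y))$ and a projection part $-(x_i-y_i)\cdot\big(R(x_i,F_i(t,X))-R(y_i,F_i(t,Y))\big)$. Since both trajectories stay in the compact set $G=\overline\Omega^{\,n}$, the continuity \eqref{1} provides $M:=\max_i\max_{[0,T]\times G}|F_i|<\infty$, so $|F_i(t,X(t))|\le M$ and $(F_i\cdot\nu)_+\le M$ along the solutions.

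The force part is harmless: by Cauchy--Schwarz, $|x_i-y_i|\le\|X-Y\|$, and the Lipschitz hypothesis \eqref{2},
\[
\big|(x_i-y_i)\cdot(F_i(t,X)-F_i(t,Y))\big|\le L\,\|X-Y\|^2.
\]
The real work is to bound the projection part from above by a multiple of $\|X-Y\|^2$. The key geometric ingredient I would establish first is that the $C^2$ regularity of $\partial\Omega$ gives a constant $C_\Omega>0$ with
\[
\nu(x)\cdot(x-y)\ge -C_\Omega\,|x-y|^2\qquad(x\in\partial\Omega,\ y\in\overline\Omega).
\]
For $|x-y|<\varepsilon$ this follows from a second-order Taylor expansion of $d_s$ along the segment $[x,y]\subset N^\varepsilon(\partial\Omega)$, using $d_s(x)=0$, $\nabla d_s(x)=\nu(x)$, $d_s(y)\le 0$ and the boundedness of $\nabla^2 d_s$ on $\overline{N^\varepsilon(\partial\Omega)}$; for $|x-y|\ge\varepsilon$ it is immediate from $\nu(x)\cdot(x-y)\ge-|x-y|\ge-\varepsilon^{-1}|x-y|^2$. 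This is the quantitative, non-convex substitute for the supporting-hyperplane inequality $\nu(x)\cdot(x-y)\ge 0$ valid when $\Omega$ is convex.

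With this estimate I would finish by a short case distinction on the positions of $x_i,y_i$. If both lie in $\Omega$ the projection part vanishes. If $x_i\in\partial\Omega$ and $y_i\in\Omega$ (or vice versa), it equals $-(F_i(t,X)\cdot\nu(x_i))_+\,(x_i-y_i)\cdot\nu(x_i)$, which by $0\le(F_i(t,X)\cdot\nu(x_i))_+\le M$ and the geometric estimate is at most $MC_\Omega\|X-Y\|^2$. If both lie in $\partial\Omega$, writing $\alpha:=(F_i(t,X)\cdot\nu(x_i))_+\ge0$ and $\beta:=(F_i(t,Y)\cdot\nu(y_i))_+\ge0$, the projection part equals $-\alpha\,(x_i-y_i)\cdot\nu(x_i)+\beta\,(x_i-y_i)\cdot\nu(y_i)$; applying the geometric estimate to $\nu(x_i)\cdot(x_i-y_i)$ and to $\nu(y_i)\cdot(y_i-x_i)$ bounds this from above by $(\alpha+\beta)C_\Omega|x_i-y_i|^2\le 2MC_\Omega\|X-Y\|^2$. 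Adding the force part, I obtain $J_i(t)\le(L+2MC_\Omega)\|X-Y\|^2$, which is the claim with $C:=L+2MC_\Omega$, independent of $X,Y$. I expect the main obstacle to be exactly this geometric estimate and its deployment in the boundary--boundary case: non-convexity destroys the naive monotonicity of the one-sided projection, and the argument succeeds only because the curvature defect $-C_\Omega|x-y|^2$ is quadratic, hence compatible with the subsequent Gronwall step.
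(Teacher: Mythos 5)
Your proposal is correct, and it organises the argument in a genuinely different way from the paper. The paper proves Lemma \ref{lemmaU} by a case distinction on which branch of $P$ in \eqref{P} is active for $x_i$ and $y_i$: when both points are projected it invokes Lipschitz continuity of $\nu$ and $F_i$ (Case 1), when neither is projected it uses \eqref{2} (Case 2), and only in the mixed case (Case 3) does it isolate the term $\big(F_i(t,Y)\cdot\nu(y)\big)\big((x-y)\cdot\nu(y)\big)$ and prove the quadratic estimate \eqref{pf:1}, itself handled through three further sub-cases ($|x-y|\ge\varepsilon$; $(x-y)\cdot\nu(y)\le 0$; $(x-y)\cdot\nu(y)>0$) and established via the orthogonal projection $\bar{x}$ of $x$ onto $\partial\Omega$, the inequality $|\bar{x}-y|\le 2|x-y|$, and the Lipschitz continuity of $\nu$. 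You instead split $H_i=F_i-R$ once and for all, bound the force part by \eqref{2}, and control both one-sided projection terms by a single geometric inequality, $\nu(x)\cdot(x-y)\ge -C_\Omega|x-y|^2$ for $x\in\partial\Omega$, $y\in\overline\Omega$, which is exactly \eqref{pf:1} up to relabelling; your proof of it (second-order Taylor expansion of $d_s$ centred at the boundary point itself, plus the trivial bound when $|x-y|\ge\varepsilon$) is more direct than the paper's detour through $\bar{x}$. The nonnegative prefactors $\alpha,\beta\in[0,M]$ then make the paper's Cases 1, 2, 3.1 and 3.2 collapse into one computation: inactive projections contribute zero, the sign condition of the paper's Case 3.2 is never needed, and the Lipschitz hypothesis on $F$ enters only through the force part. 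What the paper's route buys in exchange is minor: its Case 1 yields a two-sided Lipschitz bound on the difference of projected velocities where only an upper bound on the inner product is required. Your unified treatment is thus a legitimate, arguably tidier, alternative proof, and your closing remark is exactly the right diagnosis: non-convexity destroys the monotonicity $\nu(x)\cdot(x-y)\ge 0$ available for convex $\Omega$, and the proof survives because the curvature defect is quadratic and hence absorbed by Gronwall's lemma.
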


\begin{proof}
For a.e. $t\in (0,T)$, we obtain
\[
\begin{aligned}
  J_i(t) &= \frac{d}{dt}\Big(\frac{1}{2}|x_i(t)-y_i(t)|^2\Big)\\
  &=(x_i(t)-y_i(t))\cdot(\dot{x_i}(t)-\dot{y_i}(t))\\
  &=(x_i(t)-y_i(t))\cdot \big( H_i(t,X(t))-H_i(t,Y(t)) \big).
  \end{aligned}
\]
We prove the required estimate on $J_i(t)$ by separating three cases. Since the computations below do not depend on time or on $i$, we simplify notation by setting $x=x_i(t)$, $y=y_i(t)$, $X=X(t)$, $Y=Y(t)$, $f(X)=F_i(t,X(t))$ and $J=J_i$.
\begin{description}
\item Case~1: $x\in\partial\Omega,~f(X)\cdot\nu(x)>0$ and $y\in\partial\Omega,~f(Y)\cdot \nu(y)>0$. Since $\nu$ and $f$ are Lipschitz continuous, we obtain
\[
\begin{aligned}
  J&= (x-y)\cdot\Big(\big(f(X)-(f(X)\cdot\nu(x))\nu(x)\big)-\big(f(Y)-(f(Y)\cdot\nu(y))\nu(y)\big)\Big)\\
   &\le C \|X-Y\|^2
  \end{aligned}
\]

\item Case~2: Both $x$ and $y$ do not satisfy the condition in Case~1. We obtain
\[
J=(x-y)\cdot(f(X)-f(Y))\le L\|X-Y\|^2.
\]

\item Case~3: $y\in\partial\Omega$, $f(Y)\cdot\nu(y)>0$ and $x$ does not satisfy the condition in Case~1. We compute
\[
J=(x-y)\cdot\big(f(X)- [ f(Y)-(f(Y)\cdot\nu(y))\nu(y) ] \big).
\]

To complete the estimate, we separate a further set of three cases.
\item Case~3.1: $|x-y|\ge\varepsilon$. We obtain
\[
J\le |x-y|(2\|F\|_{\infty})\le C_{\varepsilon}|x-y|^2\le C_{\varepsilon}\|X-Y\|^2,
\]
where $C_{\varepsilon}=\frac{2\|F\|_{\infty}}{\varepsilon}$.

\item Case~3.2: $|x-y| < \varepsilon$ and $(x-y)\cdot\nu(y)\le 0$ (see Figure \ref{examplef5} for a sketch). We obtain
\[
\begin{aligned}
  J&=(x-y)\cdot(f(X)-f(Y)) + \big( f(Y)\cdot\nu(y) \big) \big( (x-y)\cdot\nu(y) \big) \\
  &\le (x-y)\cdot (f(X)-f(Y))\\
  &\le L\|X-Y\|^2 .
  \end{aligned}
\]
\begin{figure}[htbp]
\centering
\begin{minipage}[t]{0.48\textwidth}
\centering
\includegraphics[width=85mm]{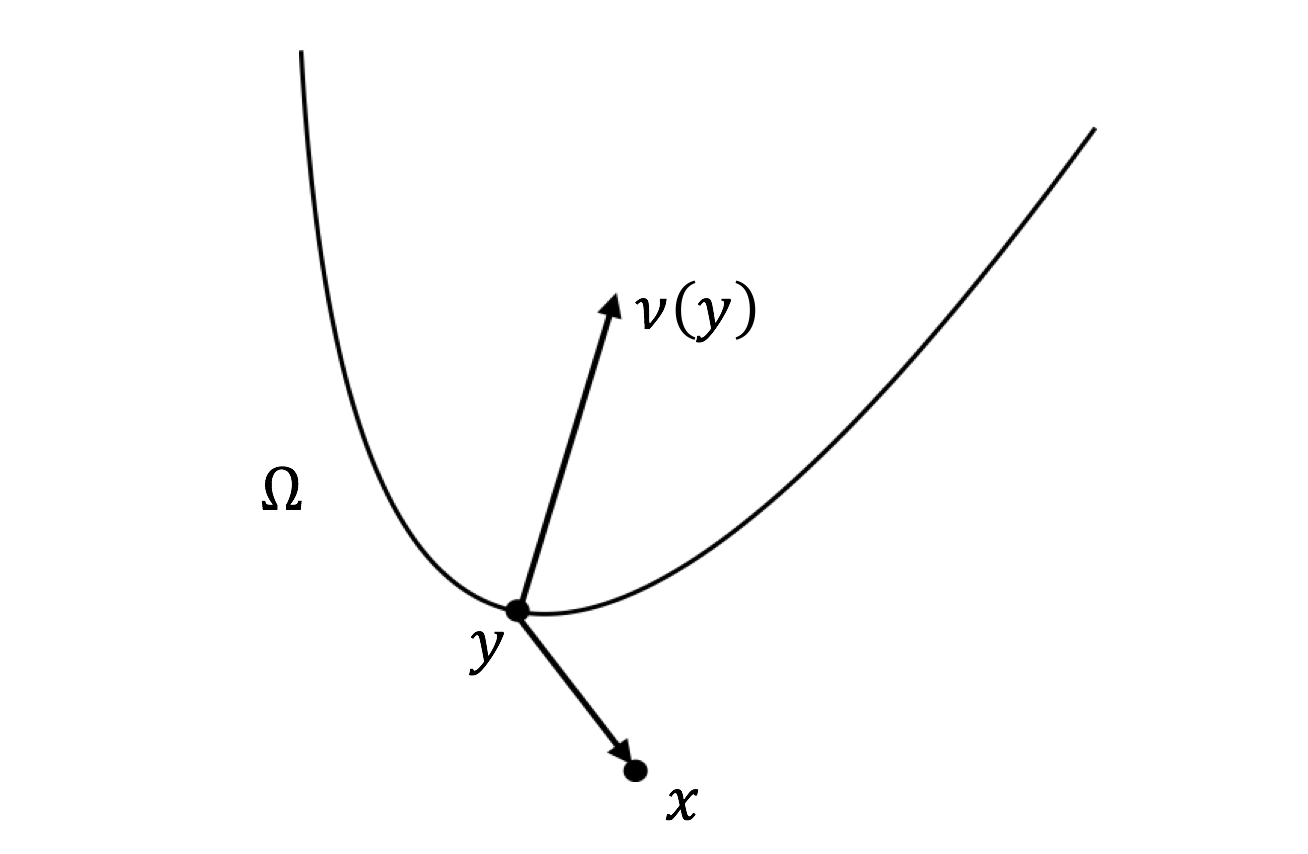}
\caption{Typical scenario of Case~3.2.}
\label{examplef5}
\end{minipage}
\begin{minipage}[t]{0.48\textwidth}
\centering
\includegraphics[width=85mm]{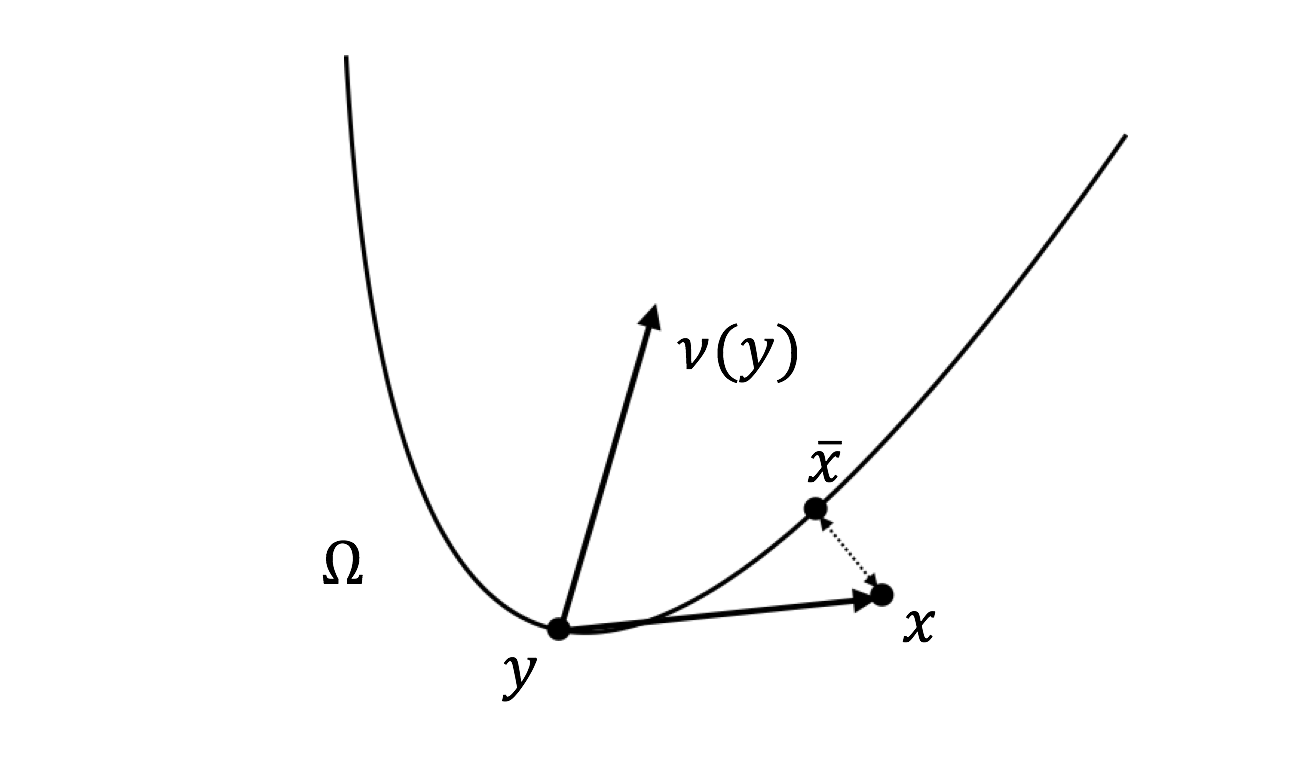}
\caption{Typical scenario of Case~3.3.}
\label{examplef6}
\end{minipage}
\end{figure}
    
\item Case~3.3: $|x-y| < \varepsilon$ and $(x-y)\cdot\nu(y)>0$ (see Figure \ref{examplef6} for a sketch). We estimate
\[
\begin{aligned}
  J&=(x-y)\cdot(f(X)-f(Y))
  + \big( f(Y)\cdot\nu(y) \big) \big( (x-y)\cdot\nu(y) \big) \\
  &\leq L \| X-Y \|^2 + \big( f(Y)\cdot\nu(y) \big) \big( (x-y)\cdot\nu(y) \big).
  \end{aligned}
\]
Since $0 < f(Y)\cdot\nu(y) < \| F \|_\infty$, it is enough to show that
\begin{equation} \label{pf:1}
  (x-y)\cdot\nu(y) \leq C |x-y|^2
\end{equation}
for some $C > 0$ which is independent of $x$ and $y$. With this aim, we set
\[
\bar{x} := \argmin_{z\in\partial\Omega}|x - z|,
\]
where $\bar{x}$ is uniquely determined since $x\in N^{\varepsilon}(\partial\Omega)\cap \overline{\Omega}$. We notet that
\begin{equation}\label{ineqA}
|\bar{x}-y|\le |\bar{x}-x|+|x-y|\le |y-x|+|x-y|=2|x-y|.
\end{equation}
We expand the left-hand side of \eqref{pf:1} as
\begin{equation} \label{pf:2}
  (x-y)\cdot\nu(y) =(x-\bar{x})\cdot\nu(\bar{x})
      +(\bar{x}-y)\cdot\nu(\bar{x})
      +(x-y)\cdot (\nu(y)-\nu(\bar{x})).
\end{equation}

Next we show that each of the three terms in the right hand side of \eqref{pf:2} is bounded from above by $C\|X-Y\|^2$. For the first term, we find $(x-\bar{x})\cdot\nu(\bar{x})=-|x-\bar{x}|\le0$. For the third term, we estimate
\[
(x-y)\cdot(\nu(y)-\nu(\bar{x}))
\le |x-y|C|y-\bar{x}|
\le 2C|x-y|^2
\le 2C \|X-Y\|^2.
\]
For the second term in \eqref{pf:2}, we obtain by using a Taylor expansion of $d_s$ that 
\[
0=d_s(y)
 =d_s(\bar{x}+(y-\bar{x}))
 =d_s(\bar{x})+(y-\bar{x})\cdot\nabla d_s(\bar{x})+R(\bar{x},y),
\]
where $|R(\bar{x},y)|\le C|\bar{x}-y|^2$. Hence,
\[
(\bar{x}-y)\cdot\nu(\bar{x})=(\bar{x}-y)\cdot\nabla d_s(\bar{x})=R(y,\bar{x})\le C|\bar{x}-y|^2\le 4C|x-y|^2 \le 4C\|X-Y\|^2.
\]
This completes the proof of \eqref{pf:1}.
\end{description}
\end{proof}

To complete the proof of the uniqueness statement of Theorem \ref{theorem1}, we continue from Lemma \ref{lemmaU} by using Gronwall's Lemma. Integrating the result from Lemma~\ref{lemmaU} from $0$ to $t$, we obtain for any $1\le i\le n$ that
\[
\begin{aligned}
  \frac{1}{2}|x_i(t)-y_i(t)|^2 
  \le \frac{1}{2}|x_i(0)-y_i(0)|^2+C\int_0^t\|X(s)-Y(s)\|^2ds
  = C\int_0^t\|X(s)-Y(s)\|^2ds.
  \end{aligned}
\]
Since the constant $C$ does not depend on $i$, we can take the maximum over $i$ on both sides to obtain
\[
\|X(t)-Y(t)\|^2 \le 2C\int_0^t\|X(s)-Y(s)\|^2ds\quad(t\in [0,T]).
\]
Hence, by Gronwall's Lemma, 
\[
X(t)=Y(t) \quad \text{for all } t \in [0,T].
\]
This completes the proof of the uniqueness statement of Theorem \ref{theorem1}.

\subsection{Properties of the solutions}
\label{Property of the solution}

We prove two properties of solutions to Problem \ref{problem1}. As we will demonstrate by simulations in Section~\ref{sec4}, particles can attach to or detach from the boundary. The first property states that particles can detach only in tangential direction. Again, we will simply write $F_i(t) = F_i(t, X(t))$.

\begin{Prop}\label{property}
Suppose that \eqref{1} holds and $X(t)~(0\le t\le T)$ is a solution of Problem~\ref{problem1}. If $x_i(t_0)\in\partial\Omega$ at $t_0\in (0,T]$, then it holds that
\begin{equation}\label{fnu}
F_i(t_0)\cdot \nu(x_i(t_0))\ge 0.
\end{equation}
Moreover, if there exists a $\delta$ such that $x_i(t)\in\Omega$ for all $t_0<t<t_0+\delta\le T$, then it holds that
\[
\frac{d^+}{dt}x_i(t_0) := \lim_{\tau\to +0}\frac{x_i(t_0+\tau)-x_i(t_0)}{\tau}=F_i(t_0)
\quad \text{and} \quad
F_i(t_0)\cdot\nu(x_i(t_0))=0.
\]
\end{Prop}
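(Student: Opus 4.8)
The plan is to track the signed distance of the trajectory to the boundary via the auxiliary function $\phi(t) := d_s^{\varepsilon}(x_i(t))$. Since $x_i(t) \in \overline{\Omega}$ and $d_s^{\varepsilon} \le 0$ on $\overline{\Omega}$ by construction, we have $\phi(t) \le 0$ for all $t \in [0,T]$, while $\phi(t_0) = 0$ because $x_i(t_0) \in \partial\Omega$. Moreover $\phi$ is Lipschitz (the $C^2$ function $d_s^{\varepsilon}$ composed with the Lipschitz path $x_i$ from Proposition~\ref{prop:23}), hence absolutely continuous, and $\nabla d_s^{\varepsilon}(x_i(t_0)) = \nu(x_i(t_0))$. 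These two observations --- that $t_0$ is a maximiser of $\phi$, and that $\phi$ is regular enough to integrate its derivative --- drive the whole proof.

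For the inequality \eqref{fnu} I would argue by contradiction. Suppose $F_i(t_0)\cdot\nu(x_i(t_0)) = -\beta$ with $\beta > 0$. Since $t\mapsto \nabla d_s^{\varepsilon}(x_i(t))\cdot F_i(t)$ is continuous and equals $-\beta$ at $t_0$, there is a left neighbourhood $[t_0-\delta, t_0]$ on which it stays below $-\beta/2$. The key point is that on this interval the one-sided projection $P$ acts trivially: for a.e.\ $t \in [t_0-\delta, t_0]$ either $x_i(t) \in \Omega$, in which case $H_i(t, X(t)) = F_i(t, X(t))$ by definition of $P$, or $x_i(t) \in \partial\Omega$, in which case $\nabla d_s^{\varepsilon}(x_i(t)) = \nu(x_i(t))$ so that $F_i(t)\cdot\nu(x_i(t)) < 0$ again forces $H_i(t, X(t)) = F_i(t, X(t))$. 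Using $\dot{x}_i = H_i$ a.e.\ (Proposition~\ref{prop:23}) and the chain rule, this gives $\dot{\phi}(t) = \nabla d_s^{\varepsilon}(x_i(t))\cdot F_i(t) \le -\beta/2$ for a.e.\ $t \in [t_0-\delta, t_0]$. Integrating yields $\phi(t_0) \le \phi(t_0-\delta) - \tfrac{\beta}{2}\delta < 0$, contradicting $\phi(t_0) = 0$.

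For the second statement, assume $x_i(t) \in \Omega$ for all $t_0 < t < t_0 + \delta$. On this interval $x_i$ is interior, so $H_i(s, X(s)) = F_i(s, X(s))$ for a.e.\ $s \in (t_0, t_0+\delta)$, and the integral representation of the mild solution gives $\tfrac{1}{\tau}\big(x_i(t_0+\tau)-x_i(t_0)\big) = \tfrac{1}{\tau}\int_{t_0}^{t_0+\tau} F_i(s, X(s))\, ds$. By continuity of $s \mapsto F_i(s, X(s))$, the right-hand side converges to $F_i(t_0)$ as $\tau \to +0$, which is the first claimed identity. To upgrade \eqref{fnu} to the equality $F_i(t_0)\cdot\nu(x_i(t_0)) = 0$, I would compute $\frac{d^+}{dt}\phi(t_0)$ in two ways. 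Since $\phi(t_0) = 0$ and $\phi(t) < 0$ for $t \in (t_0, t_0+\delta)$ (the particle is strictly interior there), the difference quotients are negative, so $\frac{d^+}{dt}\phi(t_0) \le 0$. On the other hand, $d_s^{\varepsilon} \in C^1$ and $x_i$ has the right derivative $F_i(t_0)$ just established, so the chain rule for one-sided derivatives gives $\frac{d^+}{dt}\phi(t_0) = \nabla d_s^{\varepsilon}(x_i(t_0))\cdot F_i(t_0) = \nu(x_i(t_0))\cdot F_i(t_0)$. Combining the two computations with \eqref{fnu} forces the equality.

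The main obstacle I anticipate is the case distinction in the second paragraph: one must verify that, throughout the left neighbourhood of $t_0$, the projection $P$ does not modify the force, and this has to be checked uniformly over the (measurable, but otherwise uncontrolled) partition of $[t_0-\delta, t_0]$ into times where the particle sits in the interior and times where it sits on $\partial\Omega$. The remaining steps --- the chain rule for the a.e.\ and one-sided derivatives of $\phi$, and the passage to the limit in the difference quotient --- are routine once the regularity of $\phi$ (Lipschitz, with $d_s^{\varepsilon} \in C^2$) is in place.
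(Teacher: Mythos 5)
Your proposal is correct and takes essentially the same route as the paper: a contradiction argument in which $F_i(t_0)\cdot\nu(x_i(t_0))<0$ forces the projection $P$ to act trivially on a left neighbourhood of $t_0$, so that the signed distance $\phi=d_s^{\varepsilon}\circ x_i$ must drop strictly below $0$ at $t_0$, contradicting $\phi(t_0)=0$; and for the detachment statement, the same computation of $\frac{d^+}{dt}x_i(t_0)$ from the mild-solution integral plus the two-sided evaluation of $\frac{d^+}{dt}\phi(t_0)$ combined with \eqref{fnu}. The only cosmetic difference is that you integrate the a.e.\ bound $\dot{\phi}\le-\beta/2$ over $[t_0-\delta,t_0]$, whereas the paper notes that $\dot{x}_i=F_i$ a.e.\ near $t_0$ makes $x_i$ differentiable at $t_0$ and evaluates $\dot{\phi}(t_0)<0$ directly; both close the contradiction, and your worry about the interior/boundary partition is harmless since the argument is pointwise a.e.
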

\begin{proof}
As in \eqref{h}, we set $g(t):=\big(\nabla d_s^{\varepsilon}(x_i(t)))\cdot F_i(t)\big)$. We prove $g(t_0)=F_i(t_0)\cdot \nu(x_i(t_0))\ge 0$ by contradiction. Suppose that $g(t_0)<0$. Then there exists an $h>0$ such that $g(t)<0$ for all $t\in[t_0-h, t_0+h]$. From the definition of $H_i$, we then obtain that $\dot{x}_i(t) = F_i(t)$ for a.e.~$t\in[t_0-h,t_0+h]$. Hence, $x_i$ is differentiable at $t_0$, and we obtain
\[
\frac{d (d_s^{\varepsilon} \circ x_i)}{dt} (t_0)
= \nabla d_s^{\varepsilon}(x_i(t_0))\cdot \dot{x}_i(t_0)
= g(t_0) < 0,
\]
which contradicts with $x_i(t) \in \overline \Omega$. Therefore, $F_i(t_0)\cdot\nu(x_i(t_0))\ge 0$.

Next we prove the second statement of Proposition \ref{property}. Let the asserted $\delta$ be given. As before, we obtain that $x_i \in C^1([t_0, t_0+\delta];\R^m)$, and thus
\[
\begin{aligned}
\frac{d^+}{dt}x_i(t_0) =\lim_{\tau\to +0}\frac{1}{\tau}\int_{t_0}^{t_0+\tau}\dot{x}_i(t)dt
 =\lim_{\tau\to +0}\frac{1}{\tau}\int_{t_0}^{t_0+\tau}F_i(t)dt=F_i(t_0)
\end{aligned}
\]
and 
\[
0\ge \frac{d^+}{dt}d_s^{\varepsilon}(x_i(t_0))=\nabla d_s^{\varepsilon}(x_i(t_0))\cdot \frac{d^+}{dt}x_i(t_0)=\nu(x_i(t_0))\cdot F_i(t_0).
\]
Combining this with \eqref{fnu}, we also obtain $F_i(t_0)\cdot\nu(x_i(t_0))=0$.
\end{proof}

The second property specifies the sense in which, under the Lipschitz condition \eqref{2} on $F$, how the solution to Problem \ref{problem2} approximates the solution to Problem \ref{problem1}.

\begin{Th}[Approximation theorem]
If $F_i$ satisfies \eqref{1} and the Lipschitz condition \eqref{2}, the solution $X^k(t)$ of Problem~\ref{problem2} is unique for each $k>\frac{1}{\varepsilon}\|F\|_{\infty}$ and it uniformly converges to the unique solution $X(t)$ of Problem~\ref{problem1} on $[0,T]$ as $k\to\infty$.
\end{Th}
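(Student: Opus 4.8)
The plan is to establish the two assertions separately: first the uniqueness of $X^k(t)$ for each fixed $k > \frac1\varepsilon\|F\|_{\infty}$, and then the uniform convergence $X^k \to X$.

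For uniqueness, I would argue that, on the region where solutions actually live, the right-hand side of \eqref{dotx} is Lipschitz continuous in $X$. Indeed, by Lemma~\ref{Lemma1} every solution of Problem~\ref{problem2} satisfies $x_i^k(t) \in N^{\varepsilon}(\Omega)$ for all $t \in [0,T]$, so it suffices to control the dynamics on $\overline{N^{\varepsilon}(\Omega)}^n$. There, the map $X \mapsto F_i(t,X)$ is Lipschitz with constant $L$ by \eqref{2}, and $x \mapsto (d\nabla d)(x)$ is Lipschitz by Proposition~\ref{d}; hence $X \mapsto F_i(t,X) - k(d\nabla d)(x_i)$ is Lipschitz with a constant that depends on $k$ but is finite for each fixed $k$. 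Uniqueness then follows either directly from the Picard-Lindel$\ddot{o}$f theorem, or by repeating the Gronwall argument of Section~\ref{Uniqueness}: for two solutions $X^k, Y^k$ with the same initial data one uses $|x_i^k - y_i^k| \le \|X^k - Y^k\|$ to estimate $\frac{d}{dt}\frac12|x_i^k - y_i^k|^2 \le (L + kC')\|X^k - Y^k\|^2$, where $C'$ is the Lipschitz constant from Proposition~\ref{d}, and concludes $X^k \equiv Y^k$ by Gronwall's Lemma. The $k$-dependence of the constant is harmless since $k$ is held fixed.

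For convergence, I would use a compactness-and-uniqueness (subsequence) argument rather than a direct quantitative estimate. The key observation is that the limit procedure already carried out in Section~\ref{Existence} shows that \emph{any} uniformly convergent subsequence of $\{X^k\}$ has a limit that is a mild solution of Problem~\ref{problem1}, which under \eqref{2} is unique by Theorem~\ref{theorem1}; call it $X$. So let $\{X^{k_j}\}$ be an arbitrary subsequence. By Lemma~\ref{Lemma1} it is uniformly bounded and equicontinuous (the velocities are bounded by $2\|F\|_{\infty}$), so by the Ascoli-Arzela theorem (Theorem~\ref{AA}) it admits a further subsequence converging uniformly on $[0,T]$; by the analysis of Section~\ref{Existence} (cf.\ Lemma~\ref{lemmaX} and the passage to the limit therein) its limit solves Problem~\ref{problem1} and hence equals $X$. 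Thus every subsequence of $\{X^k\}$ has a further subsequence converging uniformly to the same limit $X$, which forces the whole sequence $X^k$ to converge uniformly to $X$ on $[0,T]$.

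The main obstacle is conceptual rather than computational: one must recognise that full-sequence convergence cannot be read off directly from the existence proof (which only extracts a convergent subsequence) and must instead be upgraded via the subsequence principle, for which the uniqueness of the solution to Problem~\ref{problem1} is the decisive ingredient. A direct route---deriving a Gronwall estimate for $\|X^k(t) - X(t)\|$ by comparing the penalised dynamics of \eqref{dotx} with the projected dynamics $H_i$ of Problem~\ref{problem1}---looks considerably harder, because the discontinuity of $H_i$ across $\partial\Omega$ and the $k$-dependent penalty term resist a clean pointwise comparison; the compactness argument sidesteps this difficulty entirely.
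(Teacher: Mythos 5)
Your proposal is correct and follows essentially the same route as the paper's own proof: uniqueness of $X^k$ from the Lipschitz continuity of $F_i$ (hypothesis \eqref{2}) and of $d\nabla d$ (Proposition~\ref{d}), and full-sequence uniform convergence via compactness (Lemma~\ref{Lemma1} plus Ascoli--Arzela), the fact that any uniform limit point solves Problem~\ref{problem1} by the argument of Section~\ref{Existence}, and the uniqueness of that solution from Theorem~\ref{theorem1}. Your write-up merely makes explicit the subsequence principle that the paper leaves implicit.
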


\begin{proof}
The uniqueness of $X^k(t)$ follows from the Lipschitz continuity of $F_i$ and $d\nabla d$~(Proposition~\ref{d}). By the argument in the proof of the existence of $X$, any uniformly converging subsequence of $\{X^k\}_k$ converges to a solution $Y$ of Problem~\ref{problem1}. Since the solution of Problem~\ref{problem1} is unique, we conclude that the whole sequence $X^k$ converges uniformly to $X$ on $[0,T]$ as $k\to\infty$.
\end{proof}

\section{Gradient flows of particle interaction energies}\label{sec3}
\setcounter{equation}{0}
\subsection{Particle interaction energy and its minimization problem}

Let $\Omega$ and $X = \{x_i\}_{i=1}^n \in G$ be as in Section~\ref{ss:PnT}. To each state $X \in G$ we assign an energy value which is the sum of an interaction potential $V(x_i-x_j)$ (summed over each pair of two particles $x_i$ and $x_j$) and a potential energy $W(x_i)$ (summed over $i$). We assume that the potentials $V$ and $W$ satisfy
\[
\begin{aligned}
 (V1) &:~V \in C^2(\R^m\setminus\{0\};\R),~\lim\limits_{x\to 0}V(x)=\infty \text{ and } V(-x)=V(x)>0\quad(x\in\R^m\setminus \{0\}),\\
 (W1) &:~W\in C^2(\overline{\Omega}) \text{ and } W(x)\ge 0.
\end{aligned}
\]
Figure \ref{V} illustrates a typical example of $V$.
\begin{figure}[htbp]
      \begin{center}
        \includegraphics[width=90mm]{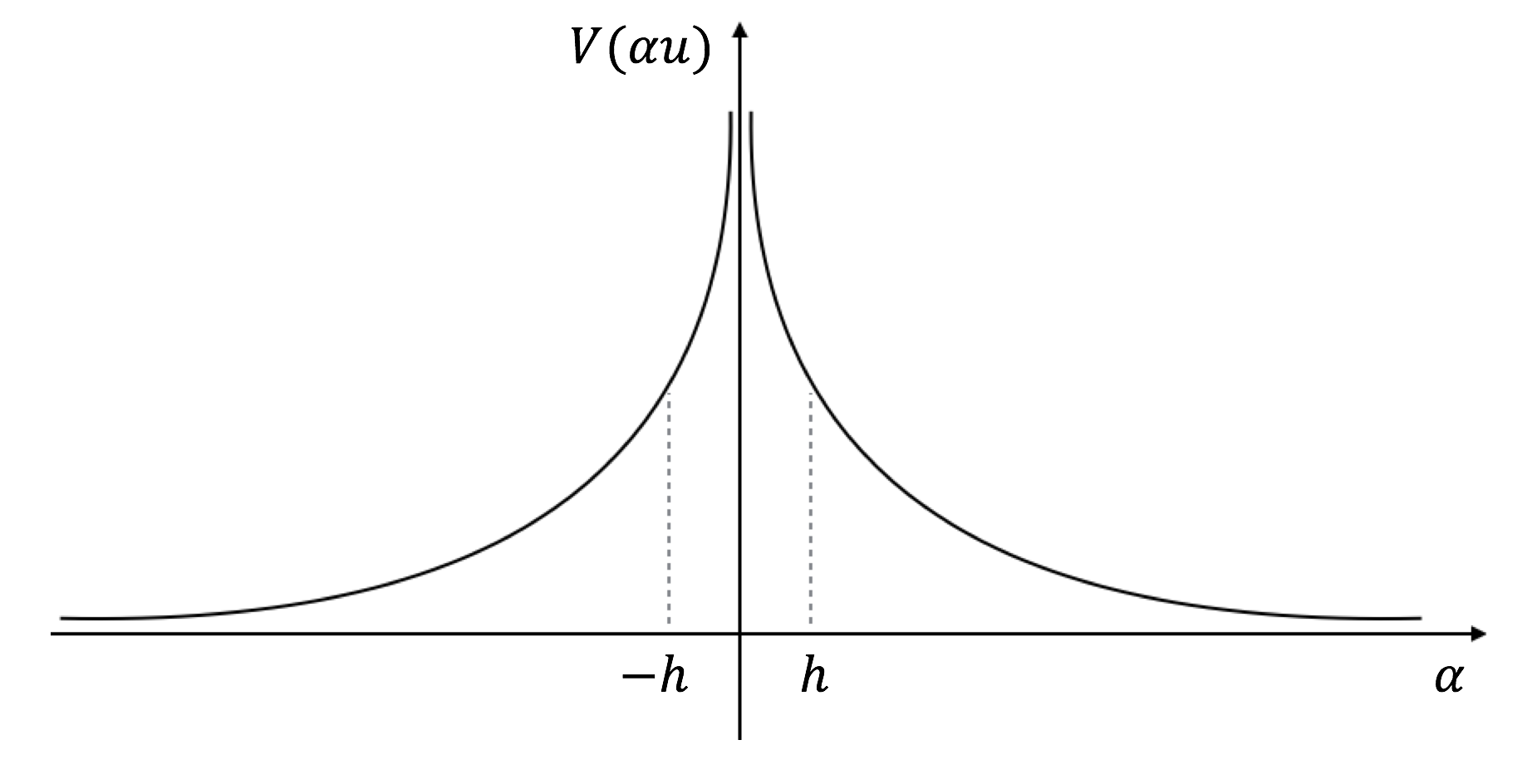}
      \end{center}
      \caption{Typical shape of the interaction potential $V$ along a line $\alpha \mapsto \alpha u$ with $u \in \R^m$ and $|u| = 1$.}
      \label{V}
\end{figure}

We set
\[
G_*:=\{X\in G~;~x_i\neq x_j~\text{for all } i\neq j\}
\]
as the set of states where the interaction energy is finite. The, we define the particle energy $E:G_*\to\R$ as
\begin{equation}\label{E}
  E(X):=\frac{1}{n(n-1)}\sum_{i=1}^n\sum_{j=1}^{i-1}V(x_{i}-x_{j})+\frac{1}{n}\sum_{i=1}^{n}W(x_{i}).
  \end{equation}
We consider the following minimization problem.
\begin{Prob}\label{problemm}
  Find $\overline{X} \in G_*$ such that
  \[
  \overline{X}=\argmin_{X\in G_*} E(X).
  \]
\end{Prob}

Existence of solutions to Problem \ref{problemm} follows from the lower semi-continuity of $E$ and $G$ being closed and bounded. In preparation for constructing a gradient flow of $E$, we prove that the particles within a particle configuration $X$ with finite energy have a minimal length of separation.

\begin{Lem}\label{lemmaV}
  For all $X\in G_*$, there exists an $h=h(E(X))>0$ non-increasing with respect to the value of $E(X)$, such that
  \begin{align}\label{hp1}
  &\min_{i\neq j}|x_i-x_j|
  > h, \text{ and} \\ \label{hp2}
  &V(y) 
  > (n-1)nE(X) \quad \text{for all~} |y| \leq h.
  \end{align}
\end{Lem}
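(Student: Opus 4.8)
The plan is to exploit the blow-up of $V$ at the origin, using the positivity of every term in $E$ to turn the energy value into a quantitative lower bound on the pairwise separations. First I would isolate a single interaction term: because $V > 0$ on $\R^m \setminus \{0\}$ by (V1) and $W \ge 0$ by (W1), discarding all but one summand in \eqref{E} gives, for every pair $i \neq j$,
\[
E(X) \ge \frac{1}{n(n-1)} V(x_i - x_j),
\qquad\text{i.e.}\qquad
V(x_i - x_j) \le n(n-1) E(X) =: M .
\]
This elementary bound is the only place where the structure of $E$ enters.

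Next I would manufacture the threshold $h$ from the blow-up $\lim_{y\to 0} V(y) = \infty$ in (V1). Consider the set $S(M) := \{ r > 0 : V(y) > M \text{ for all } y \text{ with } 0 < |y| \le r \}$. The blow-up makes $S(M)$ nonempty, and since at least one pair realises $V(x_i - x_j) \le M$ (here $X \in G_*$ supplies a point $y \neq 0$ with $V(y) \le M$), the set $S(M)$ is bounded, so $\sup S(M) \in (0,\infty)$; I would set $h := \tfrac12 \sup S(M)$. The defining condition of $S(M)$ is downward closed in $r$, so $V(y) > M$ for all $0 < |y| \le h$, while $V(0) = \infty > M$ by convention; this is exactly \eqref{hp2}. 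For the monotonicity I would observe that $M \mapsto S(M)$ is inclusion-reversing (a larger threshold $M$ can only shrink the admissible radii), hence $M \mapsto \sup S(M)$ and therefore $h$ are non-increasing; since $M = n(n-1)E(X)$ is increasing in $E(X)$, the map $E(X) \mapsto h$ is non-increasing, as required.

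Finally, \eqref{hp1} follows by contradiction: if some pair satisfied $|x_{i_0} - x_{j_0}| \le h$, then $0 < |x_{i_0} - x_{j_0}| \le h$ together with \eqref{hp2} would force $V(x_{i_0} - x_{j_0}) > M$, contradicting the bound $V(x_{i_0} - x_{j_0}) \le M$ from the first step. Hence every pairwise distance exceeds $h$, and as there are only finitely many pairs the minimum is itself strictly larger than $h$.

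I do not anticipate a genuine obstacle here; the only points requiring care are the definition of $h$ so that its monotone dependence on $E(X)$ is transparent (this is why I record $h$ through the inclusion-reversing family $S(M)$ rather than picking an arbitrary admissible radius) and keeping the inequalities strict up to and including $|y| = h$, which the factor $\tfrac12$ guarantees.
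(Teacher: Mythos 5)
Your proof is correct and follows essentially the same route as the paper: bound each interaction term $V(x_i-x_j)$ by $n(n-1)E(X)$ using the positivity of $V$ and $W$, invoke the blow-up of $V$ at the origin to obtain $h$ satisfying \eqref{hp2}, and deduce \eqref{hp1} by contradiction. Your explicit construction $h = \tfrac12 \sup S(M)$ is a nice refinement, as it makes the non-increasing dependence of $h$ on $E(X)$ transparent, a point the paper's proof asserts but leaves implicit.
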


\begin{proof}
Since $V, W \geq 0$, we obtain
\begin{equation} \label{p:hp1}
  V(x_i-x_j) 
  \le \sum_{1\le k<l \le n}V(x_k-x_l) 
  \le (n-1)nE(X) \quad \text{for any } i\neq j.
\end{equation}
Since $V(x) \to \infty$ as $|x| \to 0$, there exists an $h>0$ such that for all $|x| \leq h$, $V(x) > (n-1)nE(X)$. Together with \eqref{hp1}, this also proves \eqref{hp1}.
\end{proof}

\subsection{Gradient flow of the interaction energy}
We consider $n$ moving particles $X(t)\in G_*$. For a particle $x_i=(x_i^1,\dots,x_i^m)^{\top}$, we set
\[
\nabla_i:=
\left(
\begin{array}{c}
  \frac{\partial}{\partial x_i^1}\\
  \vdots\\
  \frac{\partial}{\partial x_i^m}\\
\end{array}
\right)
\]
and
\begin{equation}\label{eqF}
F_i(X):=-\nabla_iE(X).
\end{equation}

In this setting, the problem corresponding to Problem \eqref{problem1} is:
\begin{Prob}\label{problem3}
Let $P$ be as in \eqref{P}, $F_i$ as in \eqref{eqF}, $X^{\circ}=\{x_i^{\circ}\}_{i=1}^n\in G_*$ and $T>0$. Find a solution to
\begin{equation} \label{P33:eqn}
\left\{
\begin{aligned}
  & \dot{x_i}(t)=P\big(x_i(t),F_i(X(t))\big)\quad\big(\text{for a.e.}~t\in [0,T]\big)\\
  & x_i(0)=x_i^{\circ},
\end{aligned}
  \right.
\end{equation}
  for $i=1,2,\dots,n$.
 \end{Prob}
 
Similar to Problem \ref{problem1}, we do not expect solutions to Problem~\ref{problem3} to be of class $C^1$. Therefore, we seek again mild solutions (see Definition \ref{sol}) to Problem \ref{problem3}. Our main result in this section (Theorem \ref{Th3}) is the existence and uniqueness of the solution to Problem~\ref{problem3}.
\begin{Th}\label{Th3}
 Let $E$ be as in \eqref{E}. Then for all $T>0$ and all $X^{\circ}\in G_*$, there exists a unique mild solution (see Definition~\ref{sol}) to Problem~\ref{problem3}.
\end{Th}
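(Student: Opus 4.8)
The plan is to reduce Problem~\ref{problem3} to Problem~\ref{problem1} by a truncation argument, using the dissipation of the energy $E$ as the a priori estimate that keeps particles separated and thereby prevents the singularity of $V$ (and hence of $F_i=-\nabla_iE$) from ever being felt along a trajectory. The first ingredient is that $E$ is a Lyapunov function. For a Lipschitz solution with $\dot x_i = P(x_i,F_i)$, Proposition~\ref{prop:23} and the chain rule give, for a.e.~$t$,
\[
\frac{d}{dt}E(X(t))=\sum_{i=1}^n \nabla_iE(X)\cdot P\big(x_i,F_i\big)=-\sum_{i=1}^n F_i\cdot P\big(x_i,F_i\big).
\]
The key algebraic fact is $f\cdot P(x,f)\ge 0$ for every $(x,f)$: when $P(x,f)=f$ this is $|f|^2\ge 0$, and when $P(x,f)=f-(f\cdot\nu)\nu$ it equals $|f|^2-(f\cdot\nu)^2\ge 0$ by $|\nu|=1$ and Cauchy--Schwarz. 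Integrating in time, $E(X(t))$ is non-increasing along any solution.

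Next I would fix $X^\circ\in G_*$, set $r_0:=h(E(X^\circ))>0$ from Lemma~\ref{lemmaV}, and replace $V$ by a function $\tilde V\in C^2(\R^m)$ that agrees with $V$ on $\{|x|\ge r_0\}$ and is bounded with bounded first and second derivatives (possible since the only singularity of $V$ is at $0$, so on the annulus near $\{|x|=r_0\}$ all data are finite and can be extended smoothly into the ball). I would likewise extend $W$ to a $C^2$ function on $\R^m$ with bounded derivatives. The resulting energy $\tilde E$ has $\tilde F_i:=-\nabla_i\tilde E$ continuous and globally Lipschitz on $(\R^m)^n$, so $\tilde F_i$ satisfies both \eqref{1} and \eqref{2}. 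By Theorem~\ref{theorem1}, the truncated problem has a \emph{unique} mild solution $X(t)$ on $[0,T]$ with $x_i(t)\in\overline\Omega$.

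The crux is to show the truncation is invisible, i.e.\ $\min_{i\ne j}|x_i(t)-x_j(t)|>r_0$ for all $t\in[0,T]$, so that $\tilde E=E$ and $\tilde F_i=F_i$ along $X$ and hence $X$ solves Problem~\ref{problem3}. The dissipation argument applied to $\tilde E$ gives $\tilde E(X(t))\le \tilde E(X^\circ)=E(X^\circ)$. Since the separation exceeds $r_0$ at $t=0$ by \eqref{hp1}, were the claim false there would be a first time $t_1$ with $|x_i(t_1)-x_j(t_1)|=r_0$ for some pair while all distances stay above $r_0$ on $[0,t_1)$; there $\tilde E=E$, so by continuity $E(X(t_1))\le E(X^\circ)$. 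But $V,W\ge 0$ forces $E(X(t_1))\ge \frac{1}{n(n-1)}V(x_i(t_1)-x_j(t_1))>E(X^\circ)$ by \eqref{hp2}, a contradiction. This proves existence. For uniqueness, any mild solution $X$ of Problem~\ref{problem3} lives in $G_*$, so the dissipation of $E$ applies directly and the identical first-time argument (now with $E$ itself and \eqref{hp1}--\eqref{hp2}) shows $X$ stays in $\{\text{distances}>r_0\}$; there $F_i=\tilde F_i$, so $X$ is also a mild solution of the truncated problem, and the uniqueness part of Theorem~\ref{theorem1} finishes the proof.

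The main obstacle is the circularity built into the a priori estimate: the separation bound of Lemma~\ref{lemmaV} presupposes finite energy, which is precisely what one wants to conclude. The truncation is what breaks this loop, since it produces a genuinely global (on $[0,T]$) solution whose energy is unconditionally finite and dissipating, after which the barrier \eqref{hp2} upgrades the energy bound into a strict separation. Two points require care in the write-up: that the modified $\tilde V$ (and extended $W$) retain $C^2$-regularity with globally bounded derivatives so that both \eqref{1} and \eqref{2} hold and Theorem~\ref{theorem1} applies verbatim; and that the monotonicity of $E(X(t))$ is obtained by integrating the a.e.\ identity above, the chain rule being justified only through $X\in\mathrm{Lip}$ and Proposition~\ref{prop:23}.
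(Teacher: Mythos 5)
Your proposal is correct and follows essentially the same route as the paper: establish energy dissipation along solutions (the paper's Proposition \ref{prop}), truncate $V$ into a regular $\widetilde V\in C^2(\R^m)$ agreeing with $V$ above the separation scale $h(E(X^\circ))$ from Lemma \ref{lemmaV}, solve the truncated problem via Theorem \ref{theorem1}, show the truncation is never active along the solution, and obtain uniqueness by showing any solution of Problem \ref{problem3} also solves the truncated problem. The only cosmetic difference is that you keep the particles separated by a first-crossing-time continuity argument, whereas the paper additionally imposes $\widetilde V(x)\ge\min_{|y|\le h}V(y)$ on $\{|x|<h\}$ so that the energy bound excludes small separations pointwise in time; both variants are valid.
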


\subsection{Proof of Theorem~\ref{Th3}: existence and uniqueness}

In this section we prove Theorem~\ref{Th3}. Let $T > 0$ and $X^\circ \in G^*$ be given. Since $V$ is singular at 0, $F_i$ does not satisfy the Lipschitz condition in Section~\ref{ss:PnT}. Hence, we cannot apply directly Theorem~\ref{theorem1} to prove Theorem~\ref{Th3}. 

We construct a solution to Problem \ref{problem3} by introducing an auxiliary problem which fits to the assumptions in Section \ref{ss:PnT} and whose solution will turn out to be a solution to Problem \ref{problem3} too. With this aim, we first prove an energy-decay estimate under the assumption that Problem \ref{problem3} attains a solution.

\begin{Prop}\label{prop}
  Let $X$ be a solution to Problem~\ref{problem3}. Then for a.e.~$t\in (0,T)$, $\frac{d}{dt}E(X(t))\le 0$.
\end{Prop}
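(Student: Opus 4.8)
Proposition 3.3 asserts the energy decays along mild solutions.

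The plan is to show $\frac{d}{dt}E(X(t)) = \sum_i \nabla_i E(X(t)) \cdot \dot x_i(t)$ is nonpointwise and then bound each summand. By Proposition~\ref{prop:23}, a mild solution $X$ is Lipschitz, so each $x_i$ is differentiable a.e.\ and satisfies $\dot x_i(t) = H_i(t,X(t))$ for a.e.\ $t$. Since $X(t) \in G_*$ for all $t$ (this is the crucial point discussed below), the map $t \mapsto E(X(t))$ is the composition of the $C^1$ function $E$ on a neighbourhood of the trajectory with the Lipschitz curve $X$, hence is itself Lipschitz and differentiable a.e., with
\[
\frac{d}{dt}E(X(t)) = \sum_{i=1}^n \nabla_i E(X(t)) \cdot \dot x_i(t)
= -\sum_{i=1}^n F_i(X(t)) \cdot \dot x_i(t)
\]
for a.e.\ $t$, using the definition \eqref{eqF} of $F_i$.

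First I would fix a time $t$ at which all the $x_i$ are differentiable and $\dot x_i(t) = H_i(t,X(t)) = P(x_i(t), F_i(X(t)))$. The heart of the argument is the pointwise inequality $F_i \cdot P(x_i, F_i) \ge 0$ for each $i$. I would verify this directly from the definition \eqref{P} of the one-sided projection, splitting into the two cases. When $x_i \in \partial\Omega$ and $F_i \cdot \nu(x_i) > 0$, one has $P(x_i, F_i) = F_i - (F_i \cdot \nu(x_i))\nu(x_i)$, so
\[
F_i \cdot P(x_i, F_i)
= |F_i|^2 - (F_i \cdot \nu(x_i))^2
\ge 0
\]
by Cauchy--Schwarz (as $|\nu(x_i)| = 1$); in the complementary case $P(x_i, F_i) = F_i$ and the inner product equals $|F_i|^2 \ge 0$. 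Summing over $i$ then gives
\[
\frac{d}{dt}E(X(t)) = -\sum_{i=1}^n F_i(X(t)) \cdot P(x_i(t), F_i(X(t))) \le 0
\]
for a.e.\ $t \in (0,T)$, as required.

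The main obstacle I anticipate is justifying the chain rule, i.e.\ that $E \circ X$ is differentiable a.e.\ with the expected derivative. This is where the hypothesis $X^\circ \in G_*$ and the structure of $E$ matter: $E$ is only $C^1$ away from collisions, so one must know the trajectory stays in $G_*$ and in fact keeps a uniform minimal separation on $[0,T]$, so that $E$ restricted to a neighbourhood of the curve is genuinely $C^1$ with bounded gradient. Because the statement is posed \emph{under the assumption that a solution exists}, I would invoke that the solution $X(t)$ is valued in $G_*$ (collisions are excluded, so $F_i$ is finite and continuous along the trajectory) and use Lemma~\ref{lemmaV} to control the separation in terms of $E(X(t))$; a short bootstrapping or continuity argument then confirms that the minimal pairwise distance stays bounded below, so the composition $E \circ X$ is Lipschitz and the chain rule for the composition of a $C^1$ function with a Lipschitz curve applies. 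Once that regularity is in place, the sign argument above is routine.
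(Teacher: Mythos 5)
Your proof is correct and follows essentially the same route as the paper's: the paper rewrites the equation as $\dot x_i = -\nabla_i E + \chi_i\,(\nabla_i E\cdot\nu)\,\nu$ and concludes $\frac{d}{dt}E(X(t)) = \sum_{i}\bigl(-|\nabla_i E|^2 + \chi_i\,|\nabla_i E\cdot\nu|^2\bigr)\le 0$, which is precisely your pointwise inequality $F_i\cdot P(x_i,F_i)\ge 0$ obtained from Cauchy--Schwarz with $|\nu|=1$. Your extra care in justifying the chain rule is a point the paper leaves implicit, and it is settled more simply than your sketched bootstrapping: once the solution is $G_*$-valued, continuity of $X$ on the compact interval $[0,T]$ already gives a uniform positive lower bound on the pairwise separations, so $E$ is $C^1$ with bounded gradient on a neighbourhood of the trajectory and the a.e.\ chain rule applies.
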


\begin{proof}
Setting
\[
\chi_i(t):=\left\{
\begin{aligned}
  & 1\qquad\text{if}~x_i(t)\in\partial\Omega,~F_i(X(t))\cdot\nu(x_i(t))>0,\\
  & 0\qquad\text{otherwise},
\end{aligned}
  \right.
\]
we rewrite \eqref{P33:eqn} as
  \[
  \dot{x}_i(t)=-\nabla_i E(X(t)) + \chi_i(t)\big(\nabla_i E(X(t))\cdot\nu(x_i(t))\big)\nu(x_i(t))
  \]
  for a.e.~$t\in [0,T]$, $i=1,2,\dots,n$. Hence, we obtain
  \[
  \begin{aligned}
    \frac{d}{dt}E(X(t))
    &=\sum_{i=1}^n\nabla_iE(X(t))\cdot\dot{x_i}(t)\\
    &=\sum_{i=1}^n\big(-|\nabla_i E(X(t))|^2 + \chi_i(t)|\nabla_i E(X(t))\cdot\nu(x_i(t))|^2\big)\\
    &\le 0
  \end{aligned}
  \]
  for a.e.~$t\in [0,T]$.
 \end{proof}

Using Lemma \ref{lemmaV}, we obtain an $h = h(E(X^{\circ})) > 0$ such that
\[
  \min_{i\neq j}|x_i^\circ - x_j^\circ| > h.
\]
Using $h$, we choose a regular potential $\widetilde{V}\in C^2(\R^m;\R)$ which satisfies
\begin{equation}\label{potential}
\widetilde{V}(x)\left\{
\begin{aligned}
& =V(x)  &(|x|\ge h)\\
&\ge\min_{|y|\le h}V(y) &(|x|< h)
\end{aligned}
\right.
\end{equation}
Figure \ref{wideV} illustrates a typical example of $\widetilde{V}$. 
\begin{figure}[htbp] 
      \begin{center}
        \includegraphics[width=90mm]{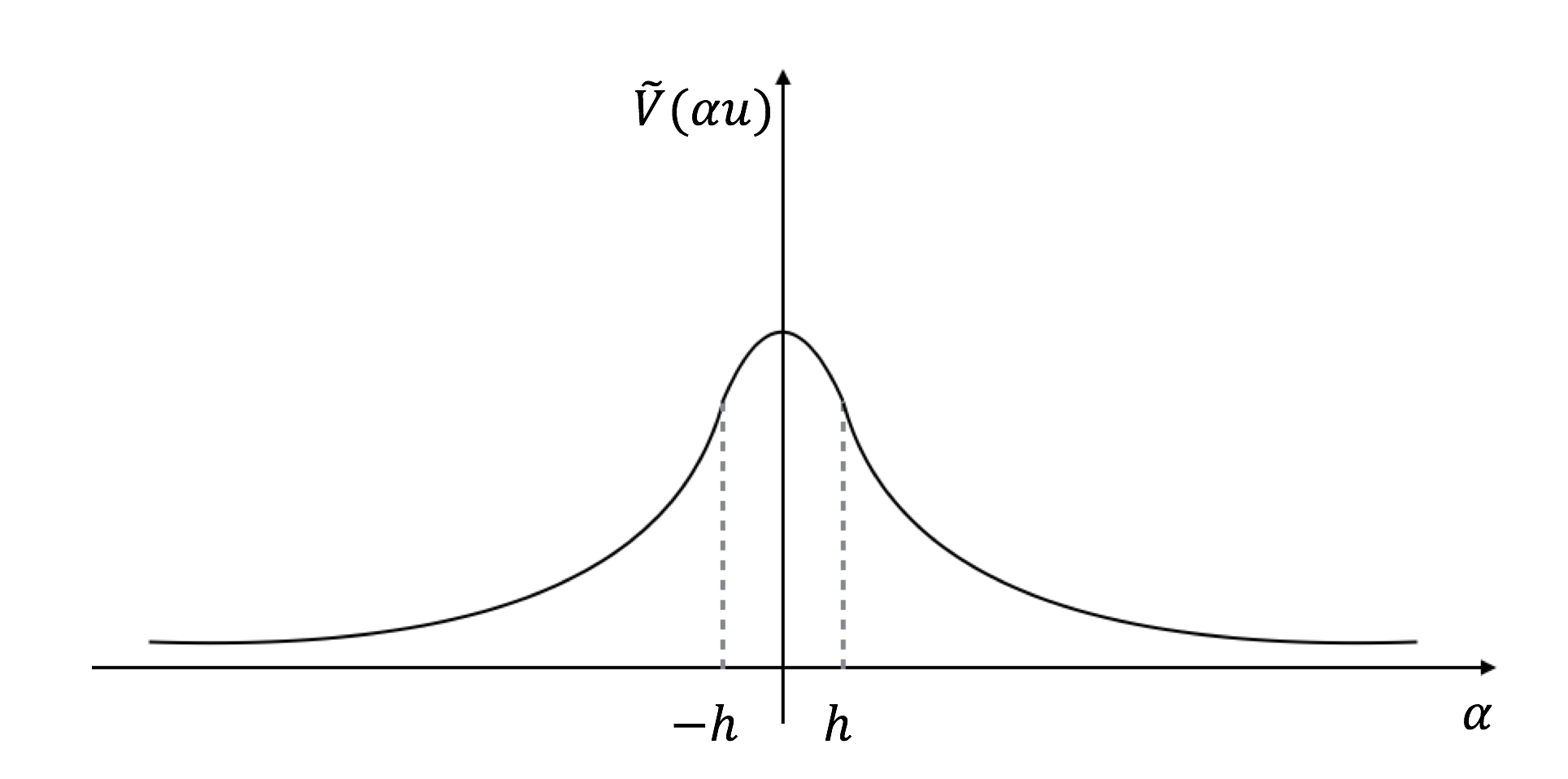}
      \end{center}
      \caption{A typical graph of $\widetilde{V}$ as seen along the same line through the origin as used in Figure \ref{V}.}
      \label{wideV}
\end{figure}

Using $\widetilde V$, we define the regularized particle energy $\widetilde{E}:G\to \R$ by
\begin{equation}\label{WE}
\widetilde{E}(X):=\frac{1}{n(n-1)}\sum_{1\le i<j\le n}\widetilde{V}(x_i-x_j)+\frac{1}{n}\sum_{i=1}^nW(x_i).
\end{equation}
By \eqref{potential}, we note that $\widetilde{E} = E$ on
\begin{equation}\label{Gh}
G_h:=\{X\in G~;~\min_{i\neq j}|x_i-x_j|\ge h\}.
\end{equation}

Using the preparations above, we finally introduce the auxiliary problem.
\begin{Prob}\label{problem4}
Let $P$ be as in \eqref{P}, $\widetilde{F}_i := -\nabla_i\widetilde{E}$, $\widetilde X^{\circ}\in G$ and $\widetilde T>0$. Find a solution to
\[
\left\{
\begin{aligned}
  & \dot{x_i} = P(x_i(t),\widetilde{F}_i(X(t)))\quad\mathrm{a.e.}~t \in (0, \widetilde T)\\
  & x_i(0)= \widetilde x_i^{\circ}\in\Omega
\end{aligned}
  \right.
  \]
  for $i=1,2,\dots,n$.
 \end{Prob}

If follows directly from Theorem \ref{theorem1} that Problem \ref{problem4} attains a unique solution for any $\widetilde X^{\circ}\in G$ and $\widetilde T>0$. Choosing $\widetilde X^{\circ} = X^{\circ}$ and $\widetilde T = T$, we denote the corresponding solution to Problem \ref{problem4} by $\widetilde X$. It is obvious from the proof of Proposition \ref{prop} that $\widetilde E$ and $\widetilde X$ also satisfy $\frac{d}{dt} \widetilde E( \widetilde X(t))\le 0$ for all $t \in (0,T)$. Following a similar argument as in the proof of Lemma \ref{lemmaV}, we then obtain
\[ 
\max_{i \neq j} \widetilde V( \widetilde x_i(t) - \widetilde x_j(t) ) 
< (n-1) n \widetilde E (\widetilde{X}(t))
\le (n-1)n \widetilde E( \widetilde X^{\circ})
= (n-1)n E(X^{\circ})
 \] 
for all $t\in [0,T]$. 
From \eqref{hp2} and \eqref{potential}, we obtain for any $|x|\le h$ that
\[
(n-1)n E(X^{\circ})<\min_{|y|\le h} V(y)\le \widetilde{V}(x).
\] 
From the two inequalities above we conclude that $\widetilde X (t) \in G_h$ for all $t\in [0,T]$. Therefore, since $\widetilde E = E$ on $G_h$, $\widetilde{X}$ is also a solution to Problem~\ref{problem3}.
\medskip

Next we prove the uniqueness of solutions to Problem \ref{problem3}. If $X$ is a solution to Problem~\ref{problem3} with initial condition $X^{\circ}$, we obtain from Proposition \ref{prop} that $E(X(t))$ is non-increasing in time. Hence,  Lemma~\ref{lemmaV} provides an $h=h(E(X^{\circ})) > 0$ such that $X (t) \in G_h$ for all $t\in [0,T]$.
Setting again $\widetilde{V}\in C^2(\R^m;\R)$ and $\widetilde E$ as above, $X$ satisfies Problem~\ref{problem4}. Since  the solution to Problem~\ref{problem4} is unique, Problem~\ref{problem3} cannot have any other solution.

\section{Numerical examples}\label{sec4}
\setcounter{equation}{0}
In this section we compute and discuss numerical solutions to the gradient flow with confinement as given by Problem \ref{problem3}. We solve Problem~\ref{problem3} numerically by discretizing in time with a fixed time step $\Delta t$. We compute the solution for the next time step by first applying the classical fourth order Runge-Kutta method without considering the confinement of the particles to the domain $\Omega$, and then projecting all particles outside $\Omega$ along the normal $\nu$ to $\partial \Omega$.

We solve Problem~\ref{problem3} for several different scenarios. In all scenarios, we choose the interaction potential as 
\[
V(x)=\frac{1}{|x|}.
\]
We split the scenarios in three sections for different choices of the domain $\Omega$.

\subsection{Circular domain $\Omega$} \label{sec41}

We set $\Omega := B_1(0) := \{|x|<1\}$ and consider no external force (i.e., $W \equiv 0$). We further take $n = 3000$ and $\Delta t = 0.5$ as reference values. Regarding the initial positions $X^\circ$ for $n$ particles, we consider two cases:
\begin{itemize}
  \item[Case 1] Similar to $G_*$ in Section \ref{sec3}, we set 
  \[
G_1 := \{X\in B_1(0)^n~;~ |x_i- x_j| \geq 0.025~\text{for all } i\neq j\},
\]
where we interpret $0.025$ as the separation distance. We choose $X^{\circ}\in G_1$ randomly uniformly. Figure~\ref{circase1} illustrates the realisation of $X^{\circ}$ that we use in our simulation.
  \item[Case 2] Similar to Case 1, we choose $X^\circ$ from the uniform distribution on 
  \[
G_2 := \{X\in B_{1/2}(1/2)^n~;~ |x_i-x_j| \geq 0.012~\text{for all } i\neq j\}.
\] 
Figure \ref{circase2} illustrates an example of $X^\circ$.
\end{itemize}

Figure \ref{circase1} illustrates the particle positions $X(t)$ at $t = 3000$. Firstly, we observe that the density of the particles is higher near the boundary. This is in line with the potential $V$ being decreasing in radial direction. Also, from Proposition \ref{property} we know that the particles cannot detach from the boundary after hitting it\footnote{To see this, suppose $x_i(t) \in \partial \Omega$. Then for all $j \neq i$, the force that particle $j$ exerts on particle $i$ has direction $x_i(t) - x_j(t)$. Together with the strict convexity of $\Omega$, we then conclude that $F_i(t, X(t)) \cdot \nu(x_i(t)) > 0$.}. Secondly, the color-coding suggests that no mixing of particles occurs. 

\begin{figure}[htbp]
\centering
\subfloat{\includegraphics[width=0.25\textwidth]{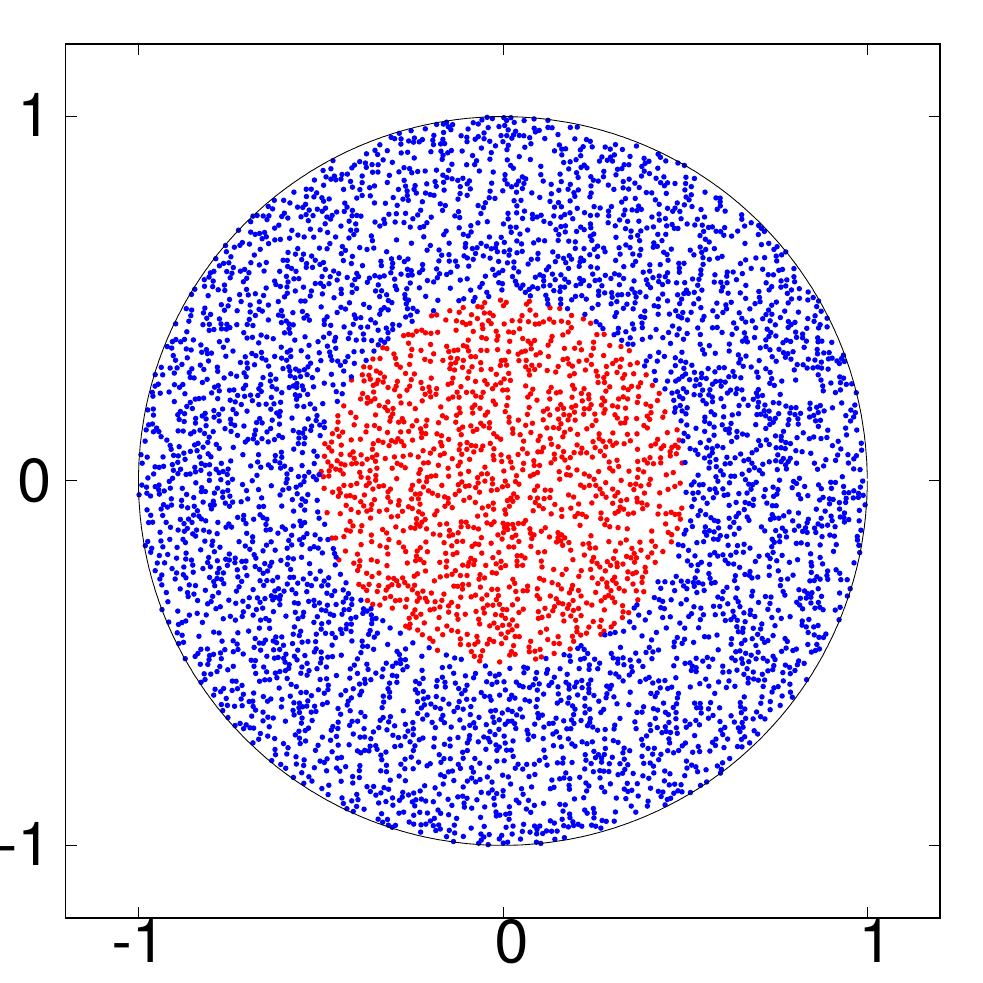}}
\subfloat{\includegraphics[width=0.25\textwidth]{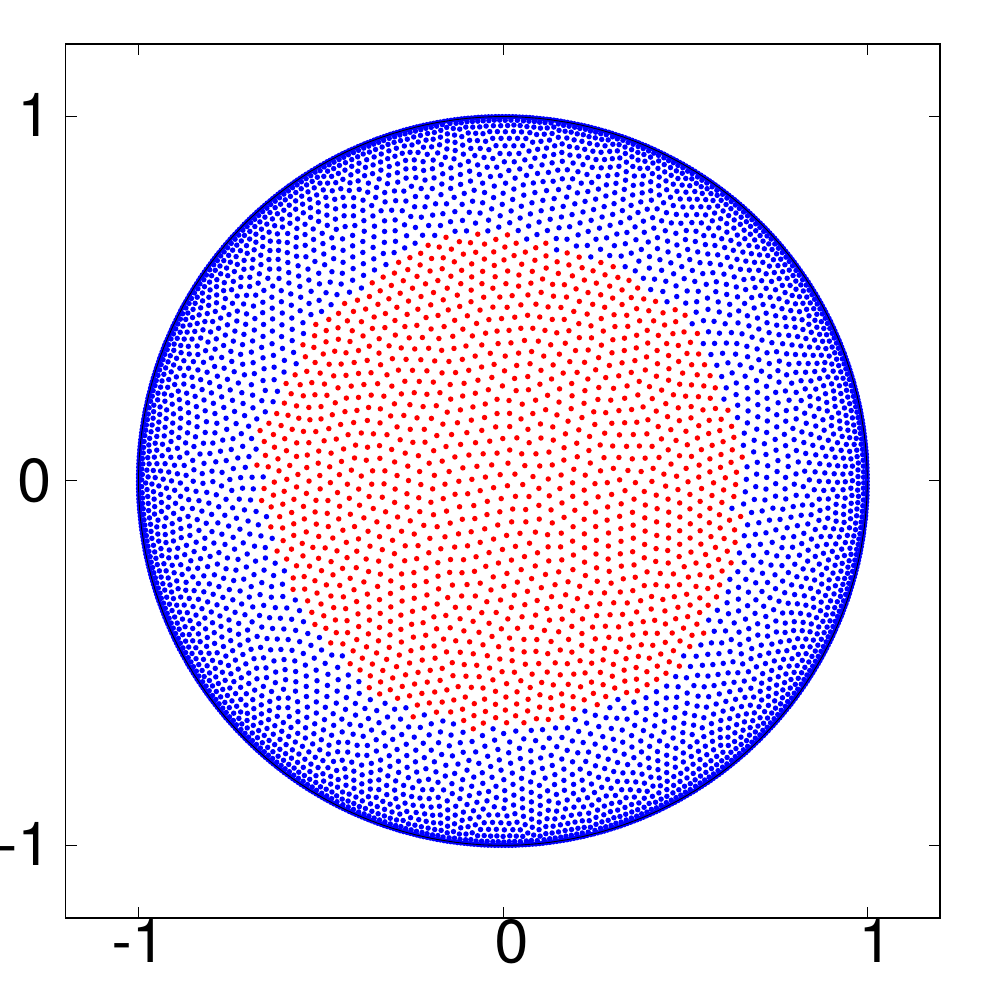}}
\caption{Choice of $X^\circ$ (left figure) as in Case~1 and the related numerical solution of Problem \ref{problem3} at $t = 3000$ (right figure). Solely for the sake of visualisation we use two colors for the particles: particles are red if $|x_i^\circ| < 0.5$ and blue otherwise.}
\label{circase1}
\end{figure}

Next we solve Problem \ref{problem3} for the initial condition as in Case 2. Figure~\ref{circase2} illustrates the numerical solution $X$ at several time instances. As in Case~1, we observe no mixing between the red and the blue particles. Also, the particle positions at $t = 3000$ are comparable. A new effect which we observe (especially in the second figure) is that several blue particles are separated from the bulk of the blue particles. A possible explanation for this effect is that we use an explicit time discretisation which is unstable if $\Delta t$ is not small enough. 

\begin{figure}[htbp]
\centering
\subfloat{\includegraphics[width=0.24\textwidth]{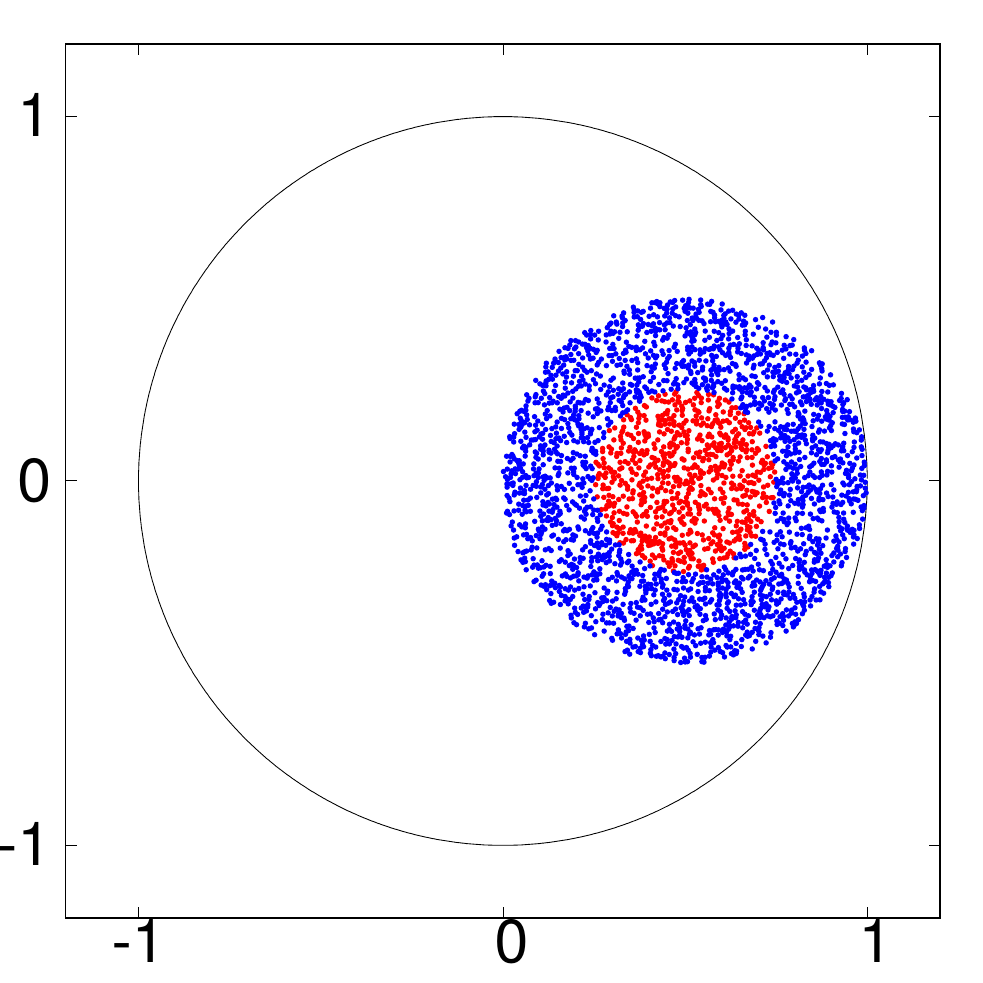}}
\subfloat{\includegraphics[width=0.24\textwidth]{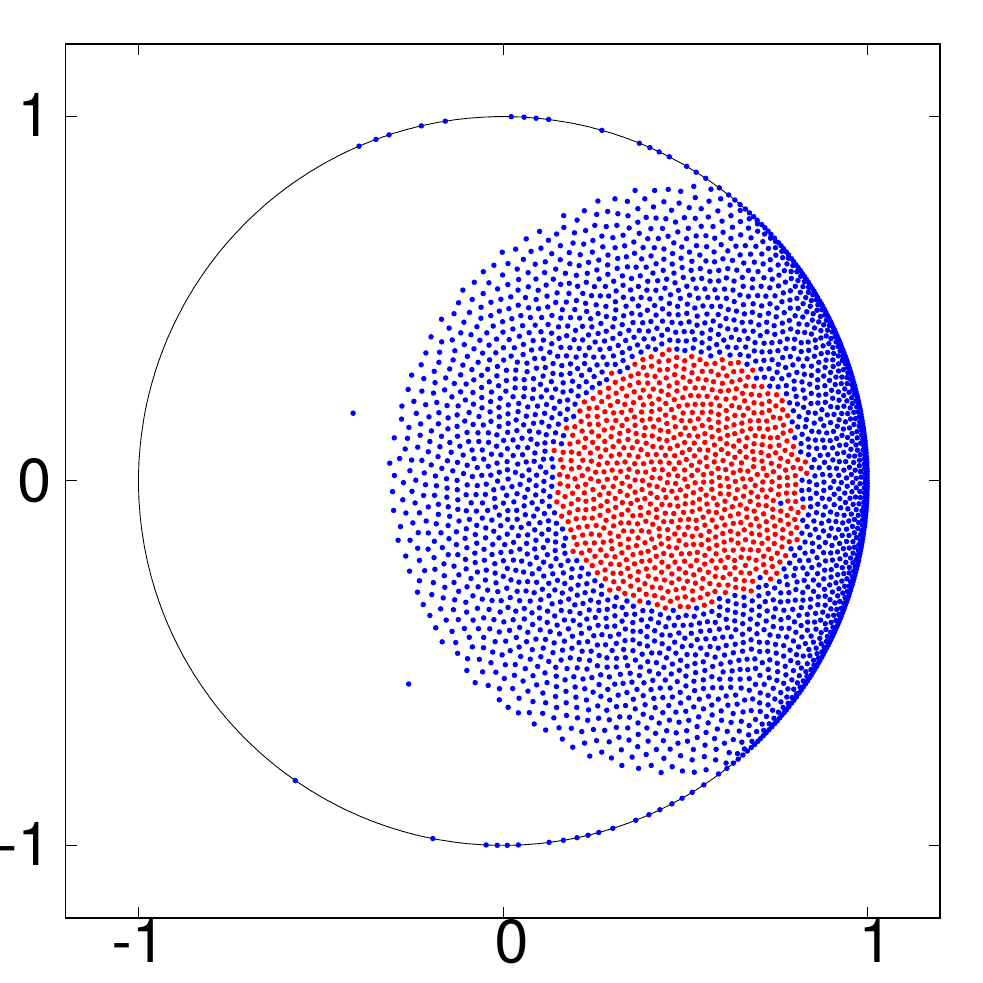}}
\subfloat{\includegraphics[width=0.24\textwidth]{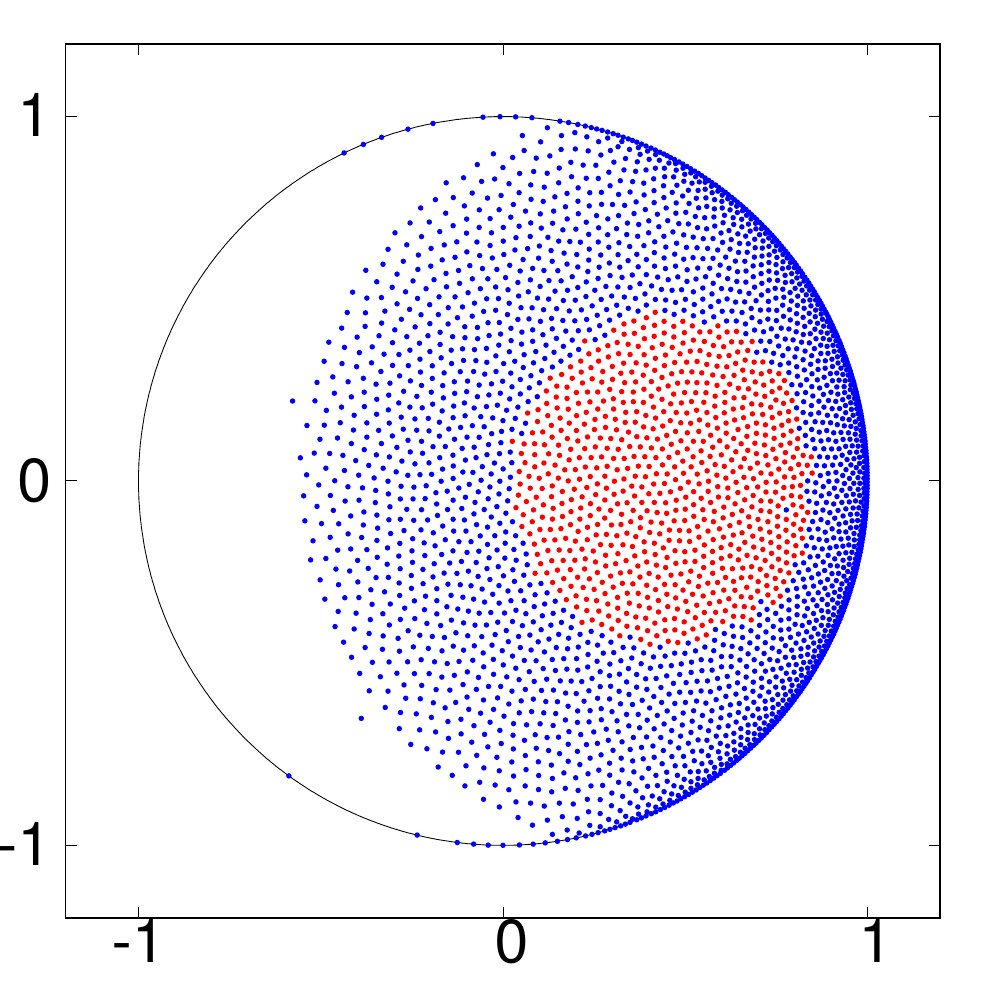}}
\subfloat{\includegraphics[width=0.24\textwidth]{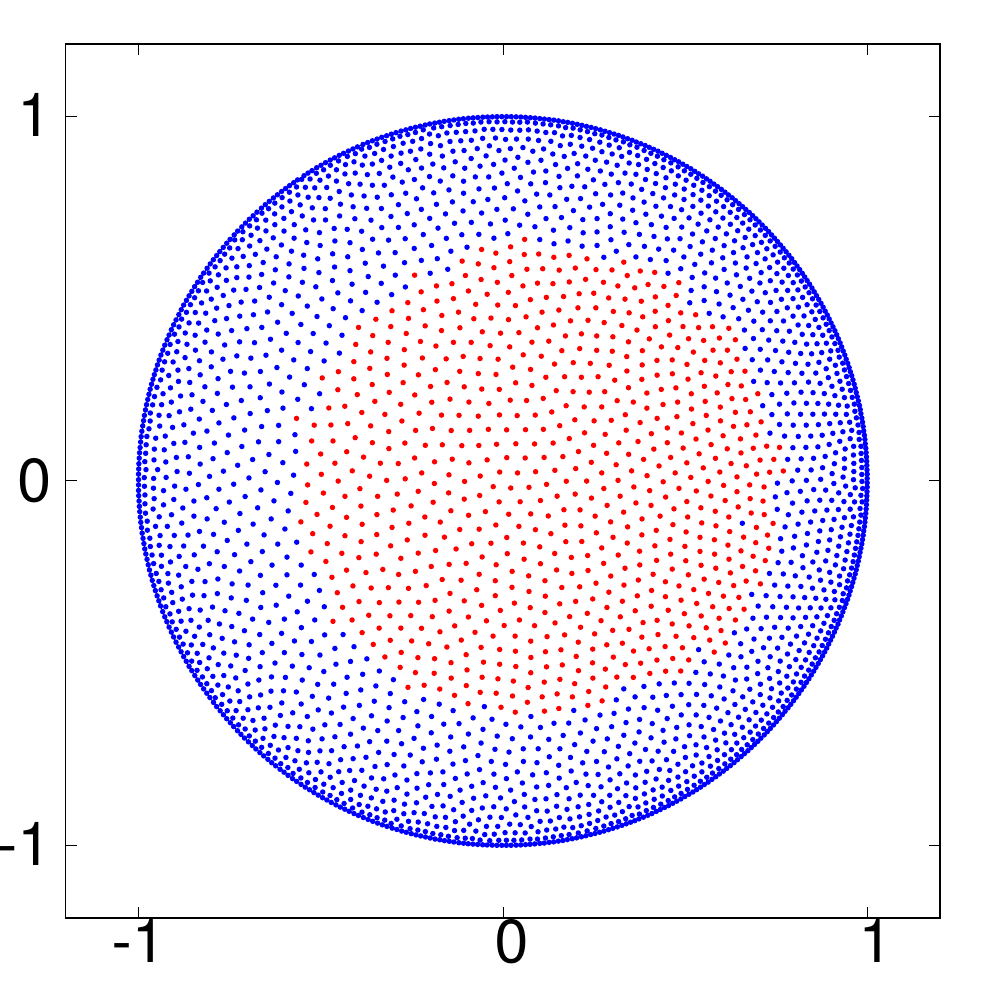}}
\caption{Choice of $X^\circ$ (left figure) as in Case~2 and the related numerical solution of Problem \ref{problem3} at $t=200$, $t=500$ and $t=3000$ respectively in the subsequent three figures. Particles satisfying $|x_i^\circ - (\frac12,0)^\top| < \frac14$ are displayed in red.}
\label{circase2}
\end{figure}

To gain insight in the dependence of the numerical solution, we consider the energy value $E(X(t))$ along the solutions in Case 1 and Case 2 for different values of $\Delta t$ and $n$. The results are displayed in Figure~\ref{Energycompare1}. We first discuss the case where $\Delta t = 3$. The graph displays peaks which violate the property of the analytical solution to Problem \ref{problem3} on the decay of the energy (see Proposition \ref{prop}). A possible explanation for these peaks is the (unstable) explicit time-discretization, which may cause the interaction energy to become large when particles come too close to each other. Interestingly, this instability occurs throughout the complete time interval.

Next we examine the figure where five different values of $n$ are considered. We observe for $t$ large enough that the energy increases as $n$ increases, but that the rate of the gap between these energy values decreases as $n$ increases. Not only does this observation suggest that solutions to Problem \ref{problem3} (after rescaling time by a factor $n$) may converge as $n \to \infty$ to a dynamical particle density, it also hints to a convergence \emph{rate}.

\begin{figure}[htbp]
\centering
\subfloat{\includegraphics[width=0.48\textwidth]{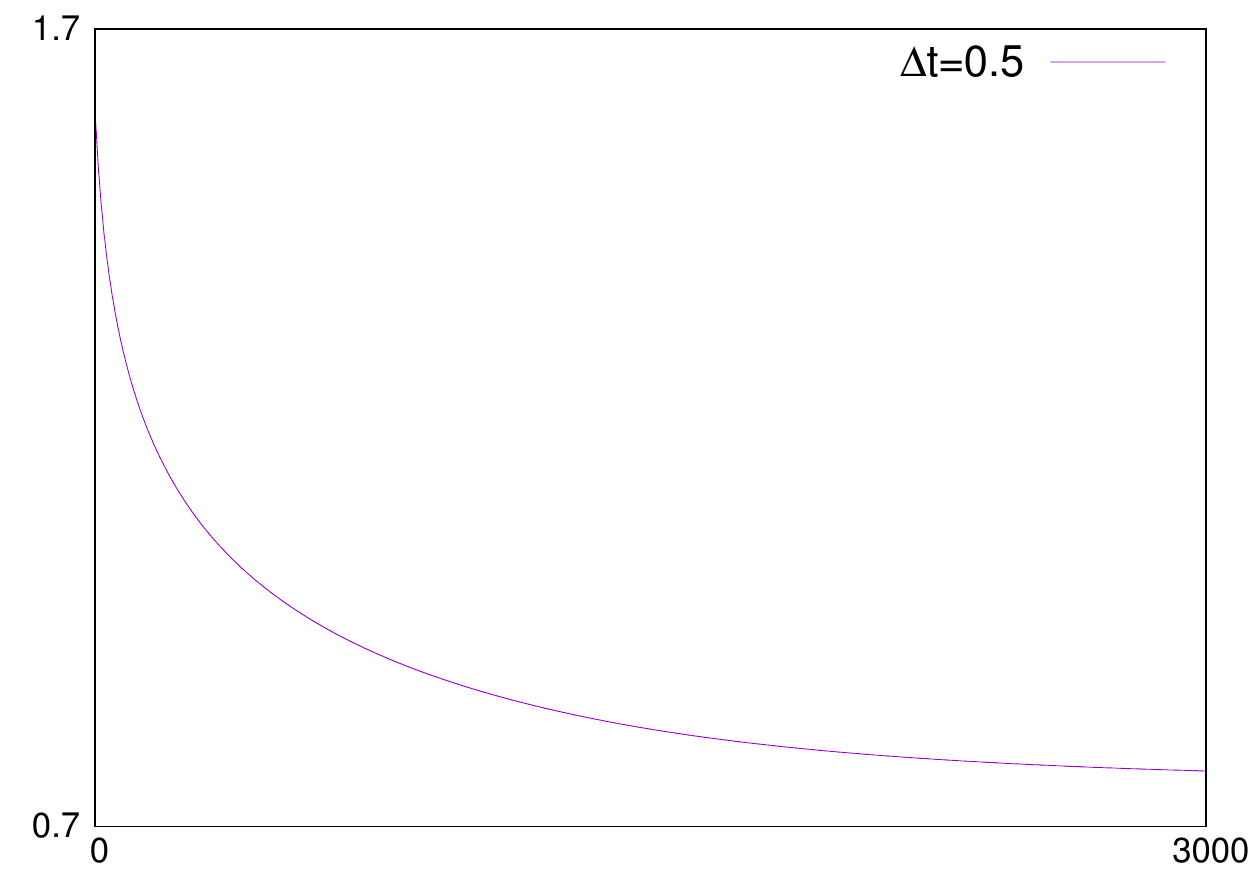}}
\subfloat{\includegraphics[width=0.48\textwidth]{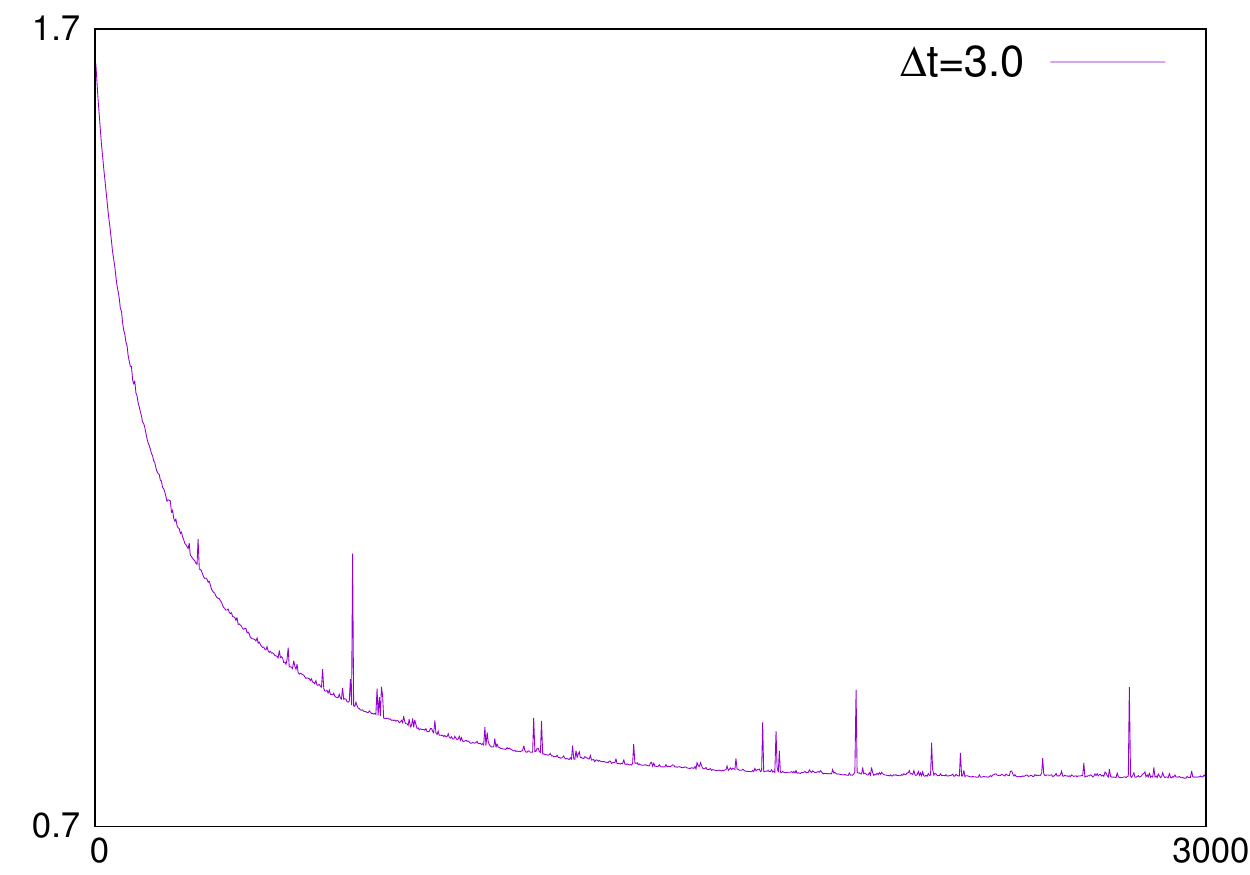}}\\
\subfloat{\includegraphics[width=0.48\textwidth]{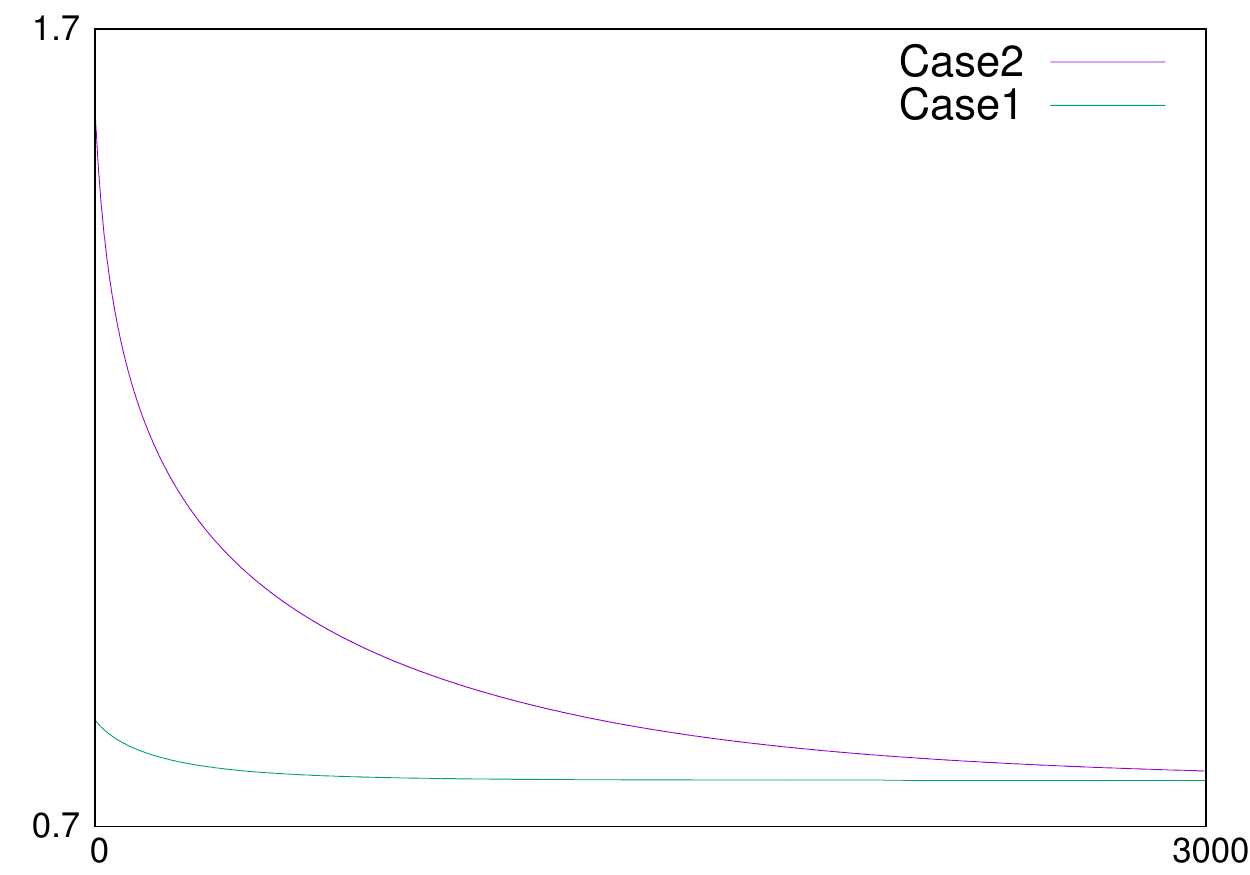}}
\subfloat{\includegraphics[width=0.48\textwidth]{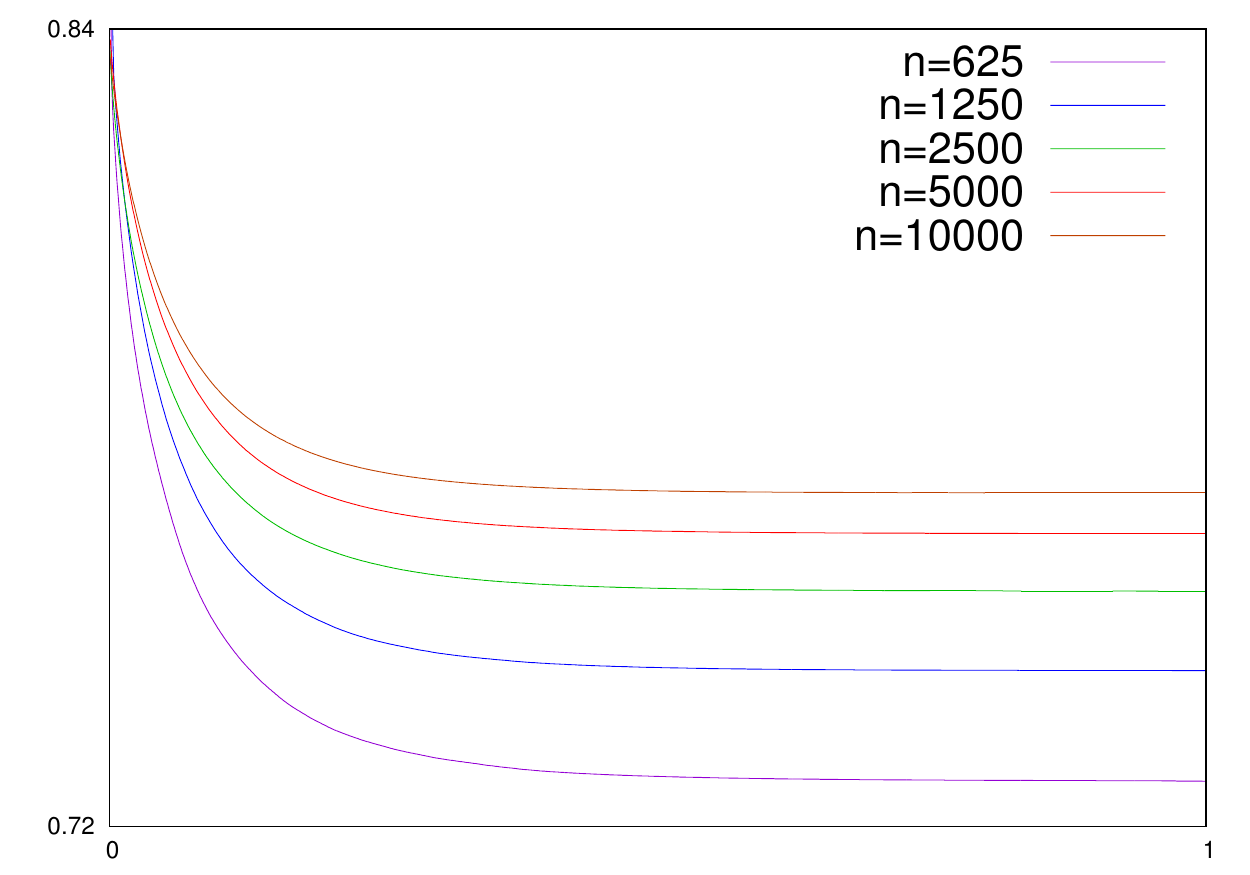}}
\caption{All four figures display the energy value (vertical axis) at the solution in time (horizontal axis). The top left figure corresponds to Case 2. In the bottom left figure we compare Case 2 with Case 1. The top right figure is obtained by recomputing Case 2 for $\Delta t = 3$ instead of $\Delta t = 0.5$. Finally, the bottom right figure compares the energy decay of five instances of Case~1 (with a rescaling of time by a factor $n$) for different values of $n$. }
\label{Energycompare1}
\end{figure}

\subsection{A non-convex domain $\Omega$ (dumbbell)} \label{sec42}
We choose $\Omega$ to be the non-convex domain as illustrated in Figure \ref{ncircase}. $\Omega$ is chosen such that the initial positions $x_i^\circ$ of Case 2 of Section \ref{sec41} fit inside $\Omega$ and such that $\Omega$ is included in the domain considered in Section \ref{sec41}. Hence, it is natural to compare this setting to Case 2 of Section \ref{sec41}. Other than $\Omega$, we will therefore make as few adjustments to the setting of Case 2 as possible. We only change $\Delta t$ to $\Delta t = 0.25$ to prevent numerical instabilities.

Figure \ref{ncircase} illustrates the solution to Problem \ref{problem3} at different time instances, and Figure \ref{ncirenergy} displays the corresponding decay of the energy. We observe that the particles, despite the non-convexity, spread out through the dumbbell in time such that the amount of particles on the left side is comparable to that on the right side. 
As in Section \ref{sec41} we observe no mixing between the red part and blue particles. Regarding the energy decay, we observe that our confinement of the domain with respect to Case 2 in Section \ref{sec41} yields a slower decrease of the energy. 

\begin{figure}[htbp]
\centering
\subfloat{\includegraphics[width=0.4\textwidth]{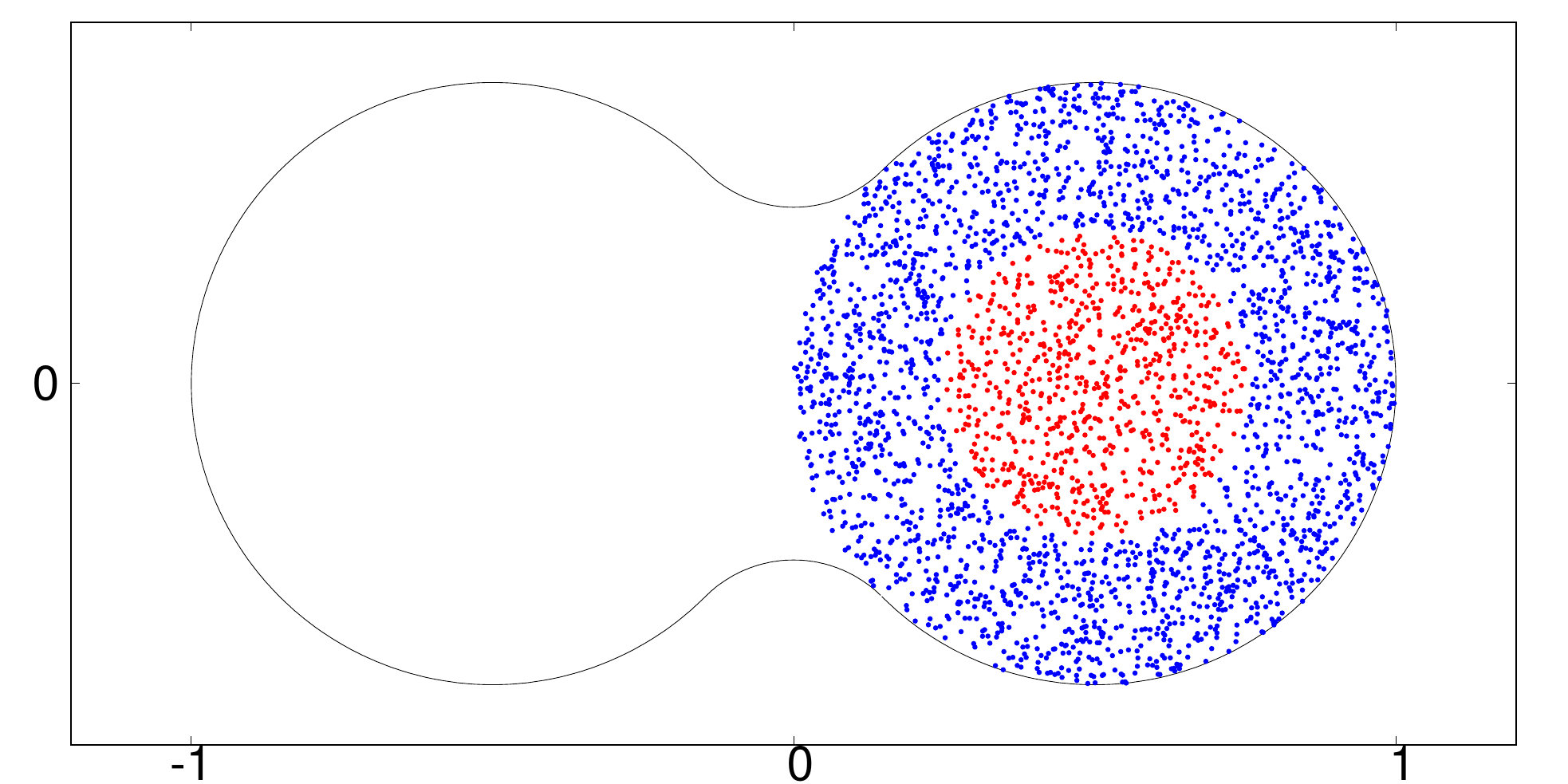}}
\subfloat{\includegraphics[width=0.4\textwidth]{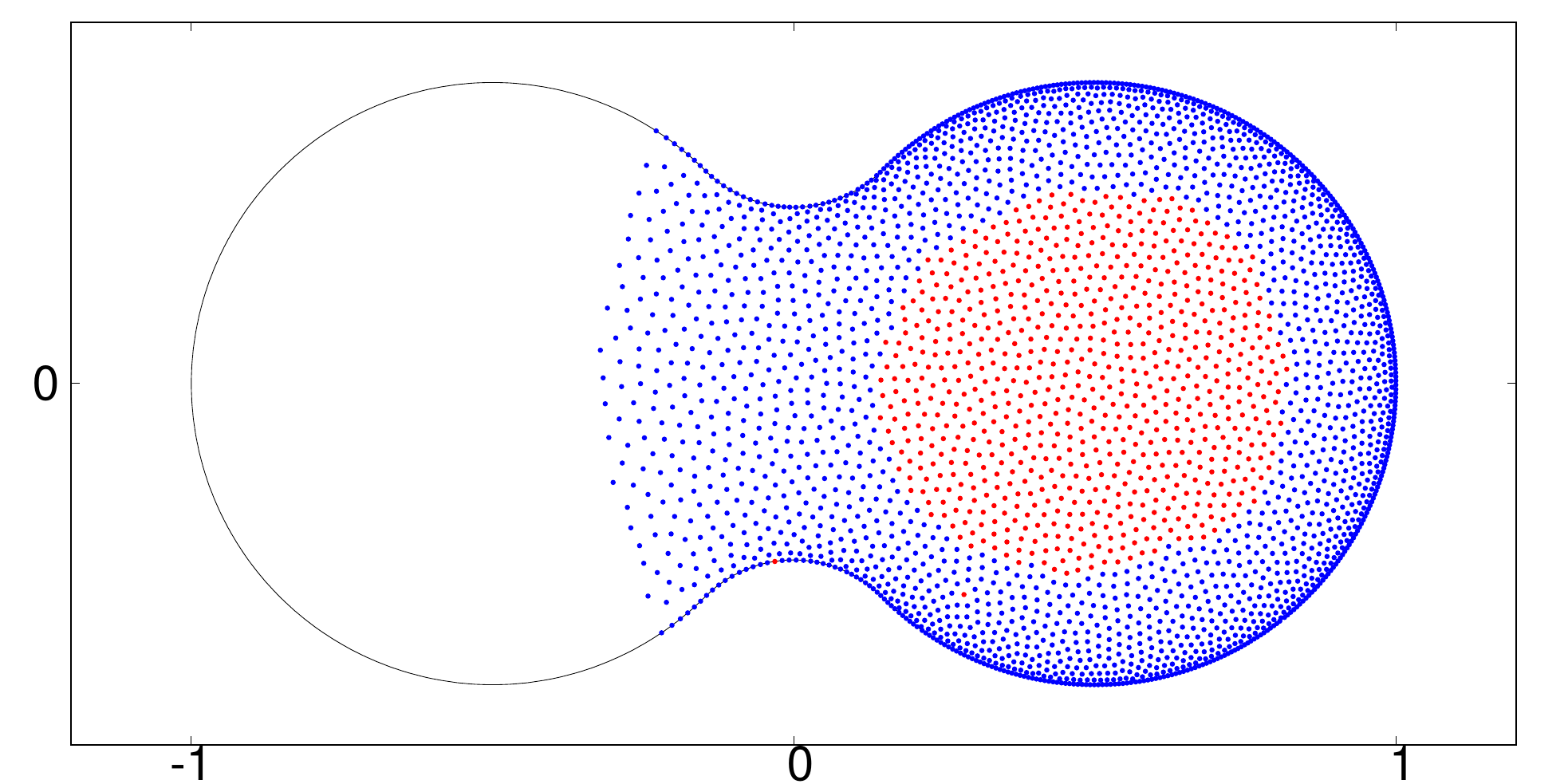}}\\
\subfloat{\includegraphics[width=0.4\textwidth]{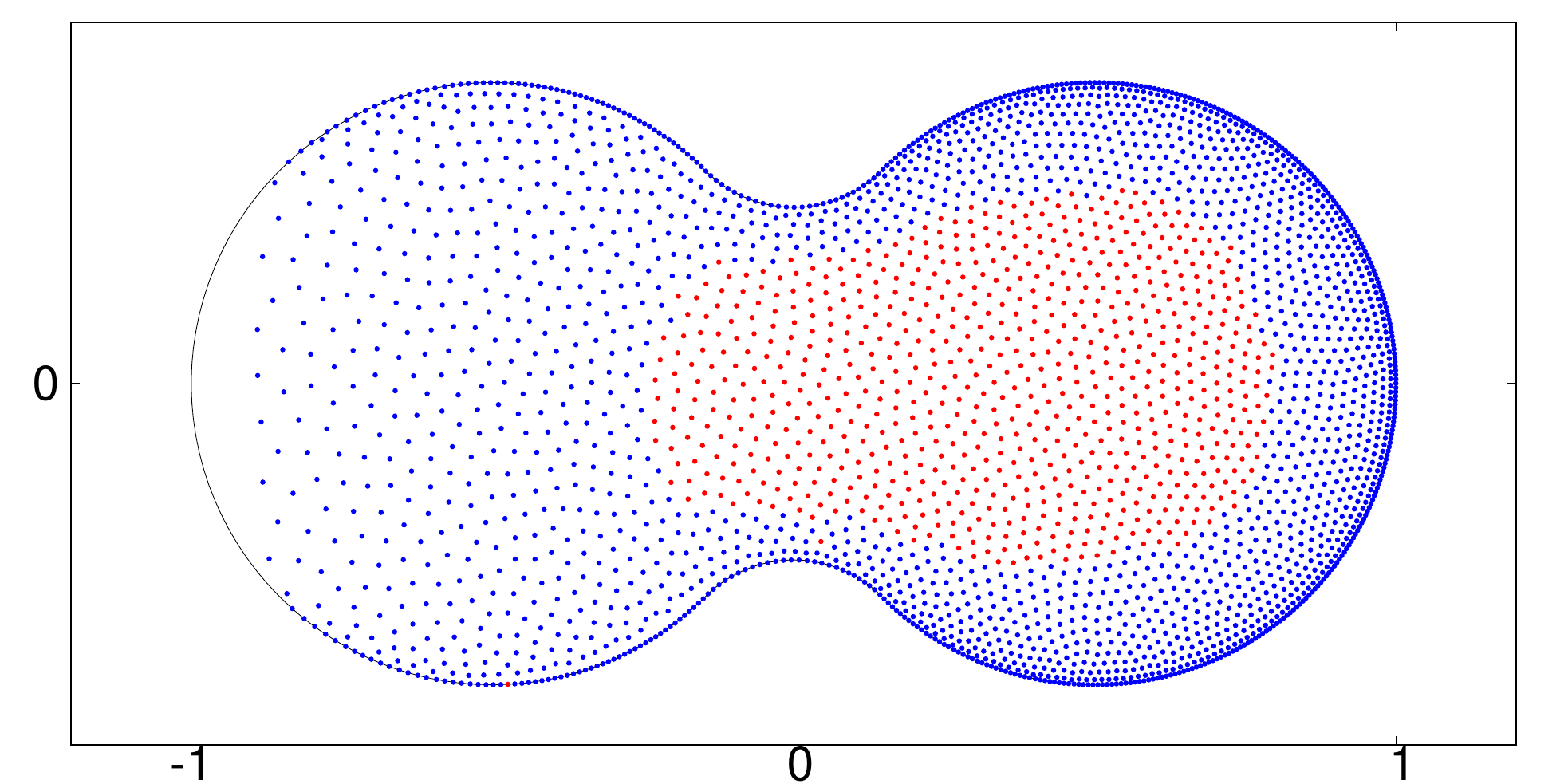}}
\subfloat{\includegraphics[width=0.4\textwidth]{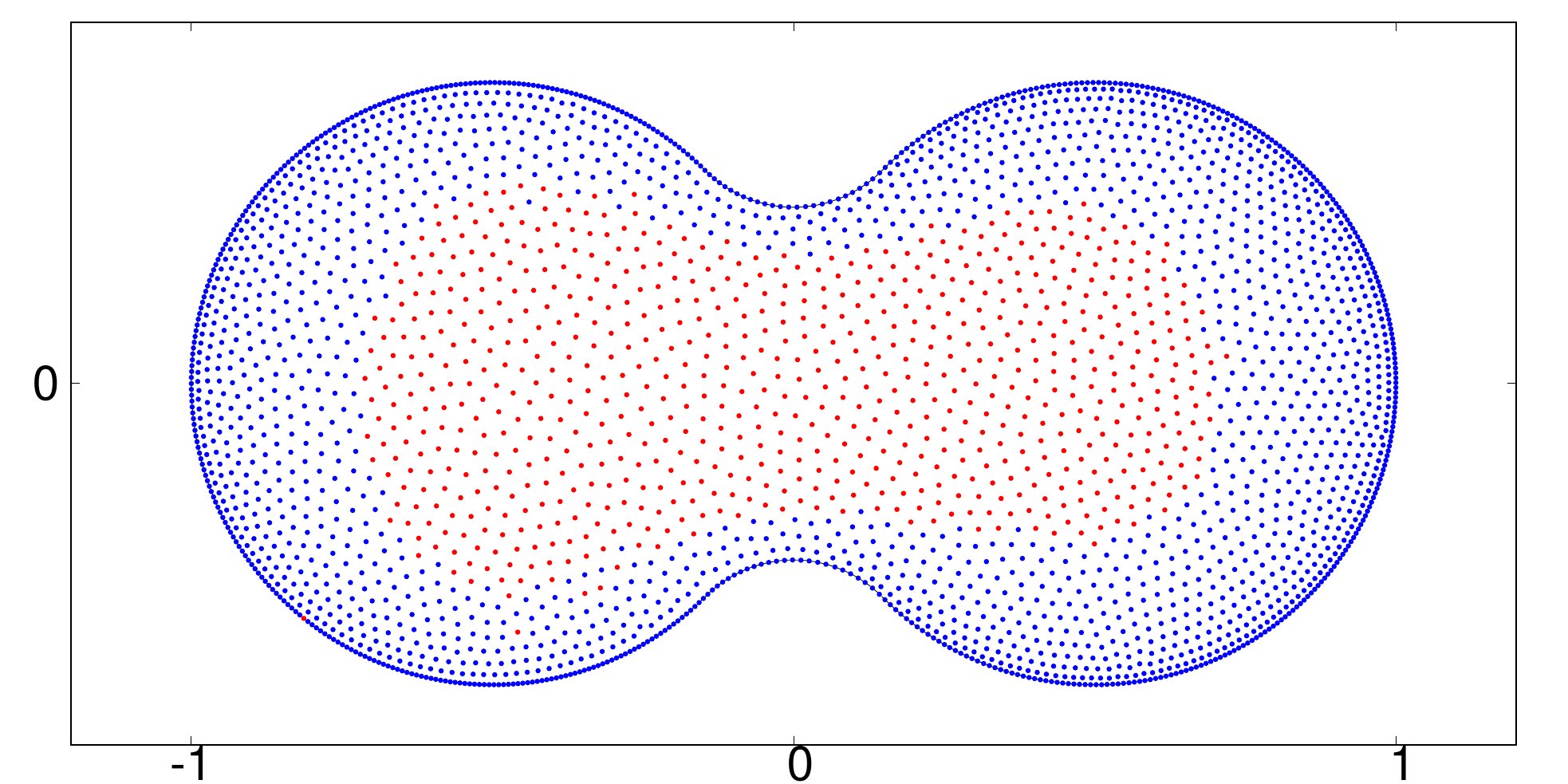}}
\caption{Aside from a different domain $\Omega$ and $\Delta t = 0.25$, the setting is the same as in Figure \ref{circase2}. The four plots correspond to the time instances $t=0$, $t=100$, $t=250$ and $t=2000$.}
\label{ncircase}
\end{figure}

\begin{figure}[htbp]
      \begin{center}
        \includegraphics[width=80mm]{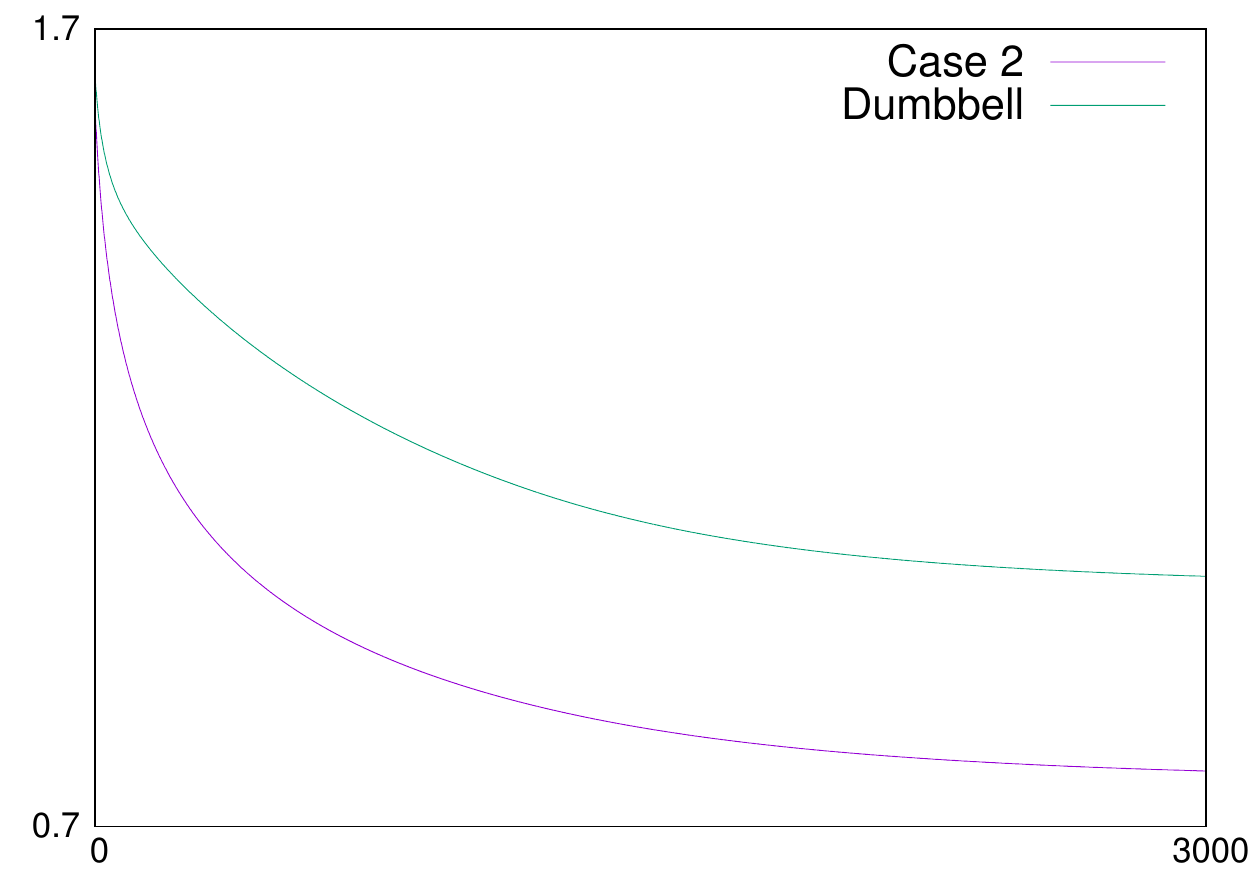}
      \end{center}
      \caption{The graph shows the energy decay with respect to the solution illustrated in Figure \ref{ncircase}. This graph is compared with the energy decay of Case 2 in Section \ref{sec41}.}
      \label{ncirenergy}
\end{figure}

\subsection{Channels with obstacles} \label{sec43}
In this section we set $\Omega$ as a channel and consider 2 different obstacles in it. The channel is given by $(-\infty, \infty) \times (0, 1.2)$ and the obstacles are displayed in Figures \ref{case2} and \ref{case3}. We set $W(x)=-0.002x_1$ as a driving force to move the particles past the obstacle. We further put $\Delta t=0.5$ and consider $n=900$ particles, which are initially put as a regular square grid in the range of $-1.7\le x_1\le 3.1$. We are interested in the manner in which particles may detach from the boundary after passing the obstacle, and to which extend the obstacle slows down the flow of the particles.

As a reference, we first solve Problem \ref{problem3} on the channel without obstacle. Figure \ref{case1} illustrates the dynamics.

\begin{figure}[htbp]
\centering
\subfloat{\includegraphics[width=0.48\textwidth]{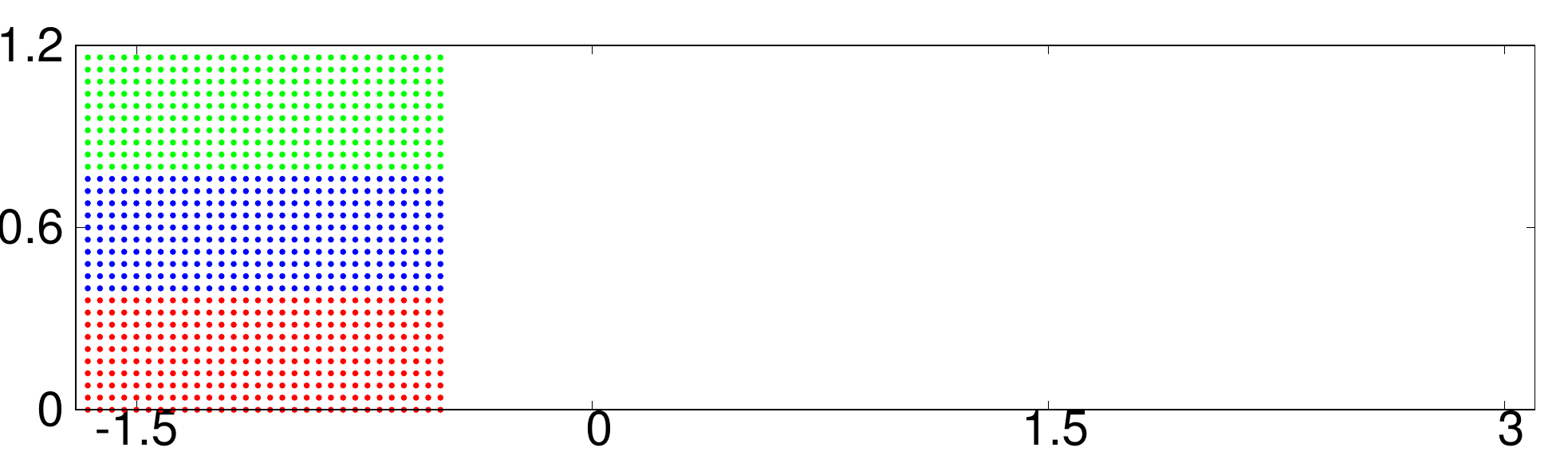}}
\subfloat{\includegraphics[width=0.48\textwidth]{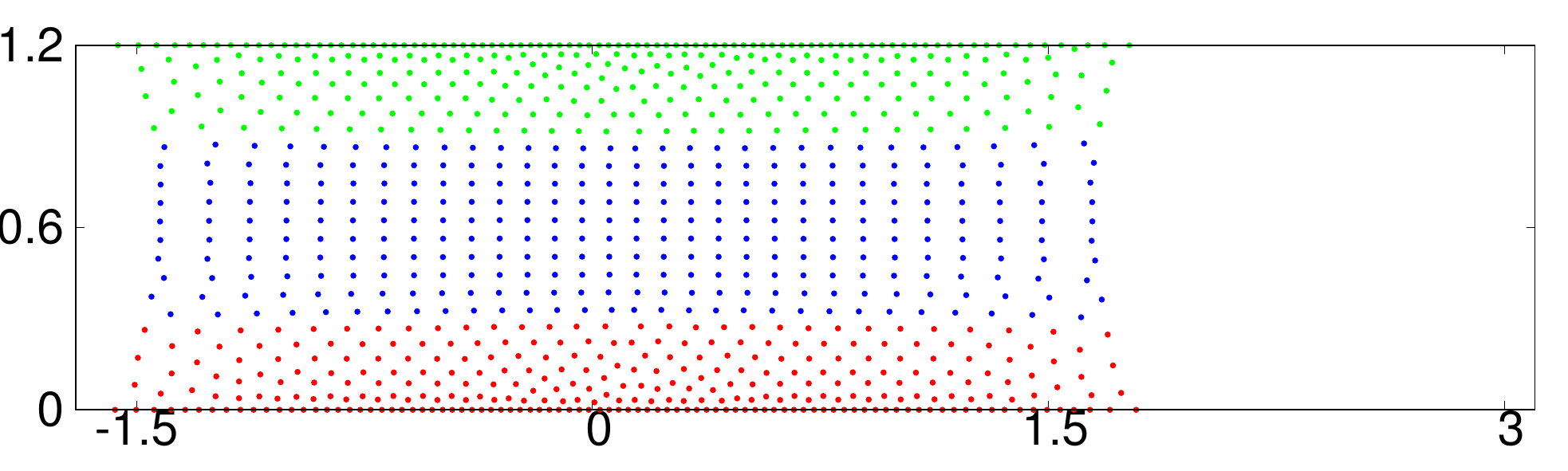}}\\
\subfloat{\includegraphics[width=0.48\textwidth]{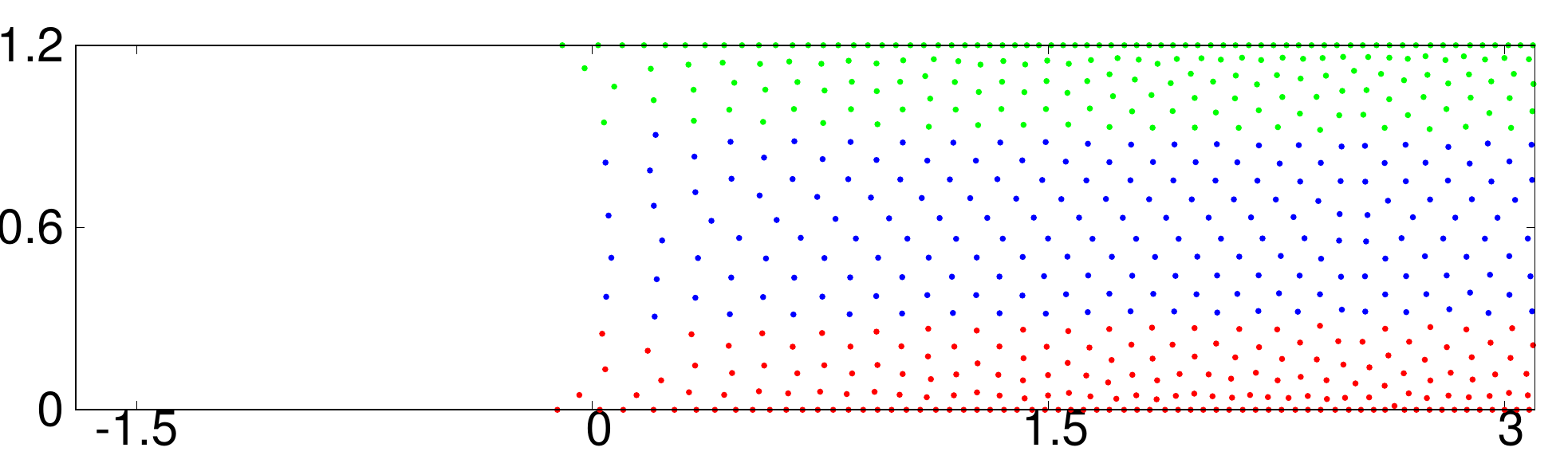}}
\subfloat{\includegraphics[width=0.48\textwidth]{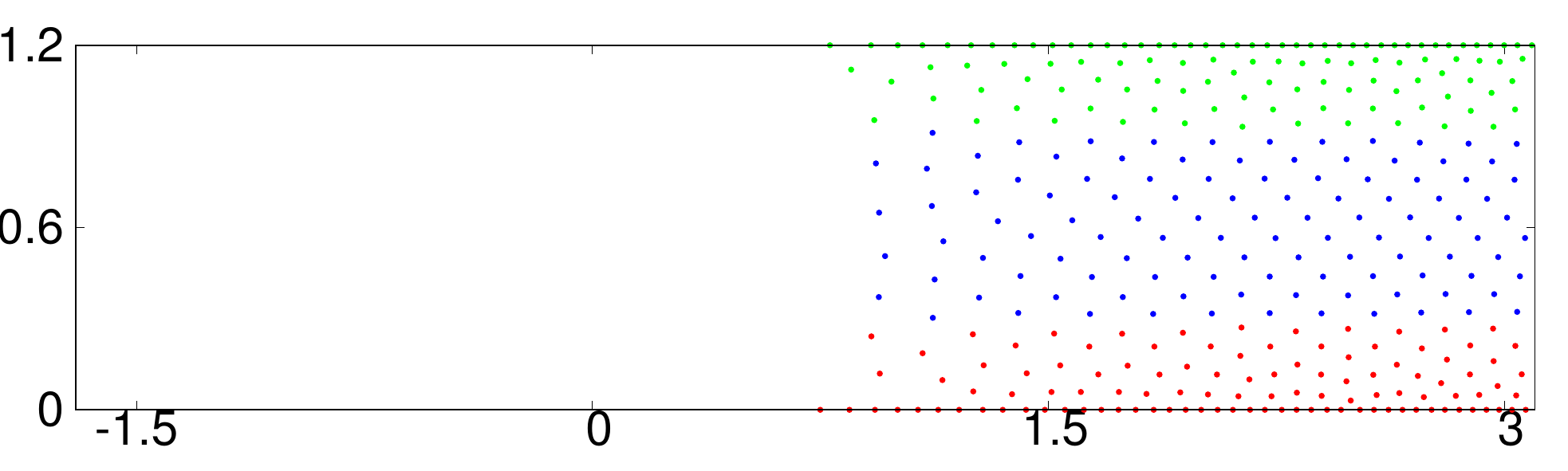}}
\caption{Channel without obstacle. The figures illustrate the particle positions at the four time instances $t=0$, $t=600$, $t=1800$ and $t=2400$ (from left to right, top to bottom). Again, the different colors for the particles are chosen solely for visualisation purposes.}
\label{case1}
\end{figure}

Next we insert a small obstacle, given by a bump, in the channel. Figure \ref{case2} illustrates the channel and the dynamics. We observe that the particles starting at the lower part of the channel pass the barrier more slowly than those that start in the upper part. The plots also show how some particles detach from the boundary after the obstacle. We also observe that the flow of the particles is hardly slowed down when compared to the dynamics displayed in Figure \ref{case1}.

\begin{figure}[htbp]
\centering
\subfloat{\includegraphics[width=0.48\textwidth]{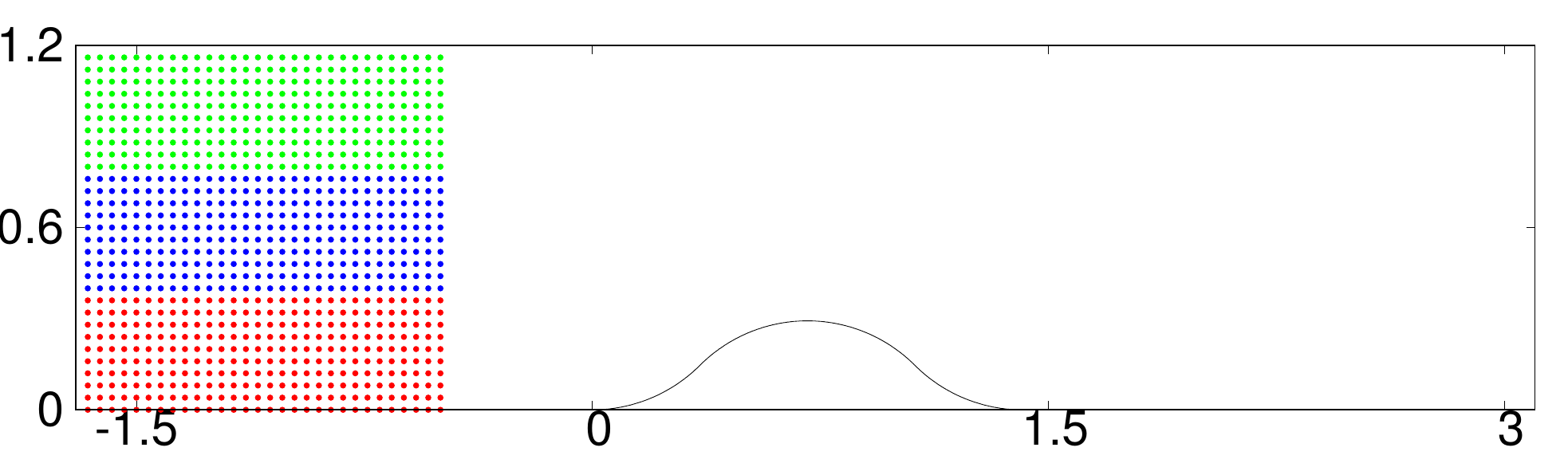}}
\subfloat{\includegraphics[width=0.48\textwidth]{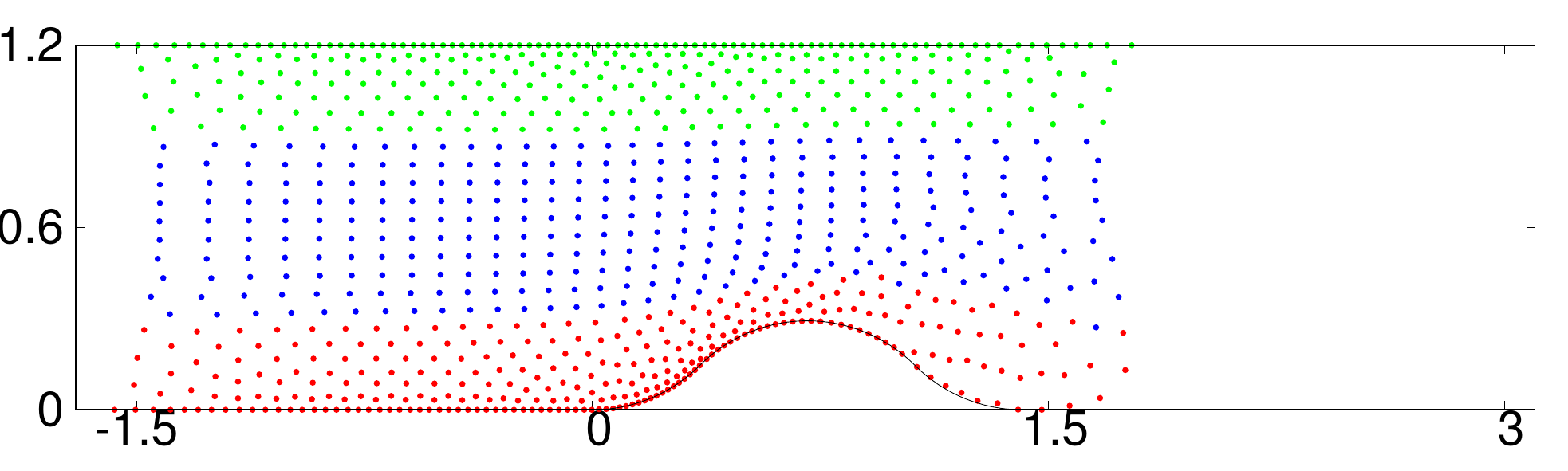}}\\
\subfloat{\includegraphics[width=0.48\textwidth]{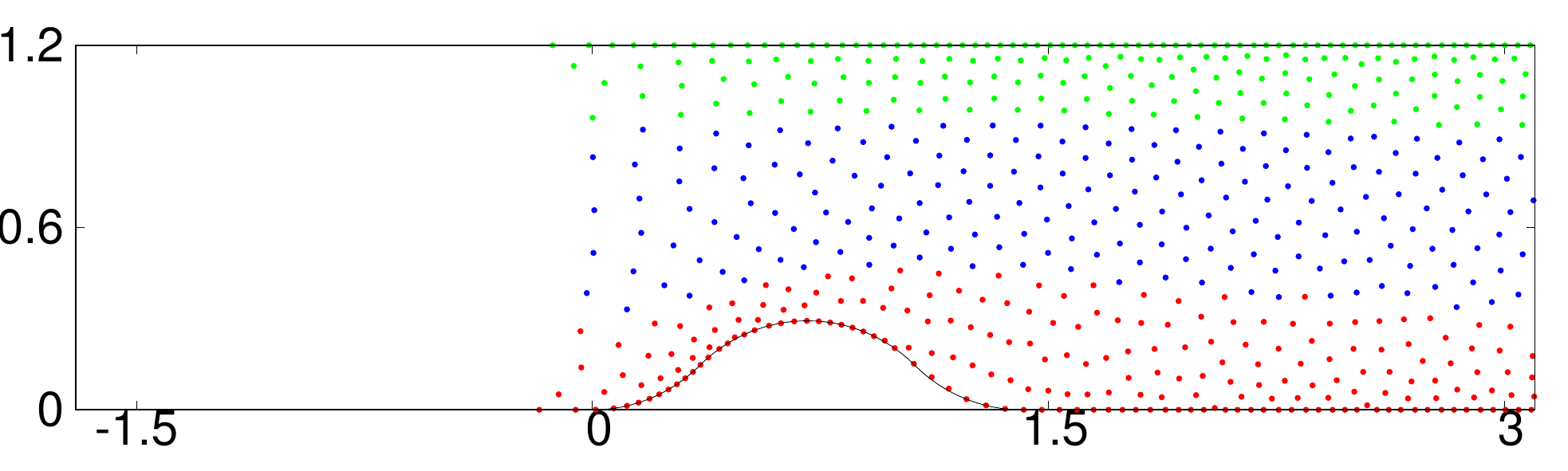}}
\subfloat{\includegraphics[width=0.48\textwidth]{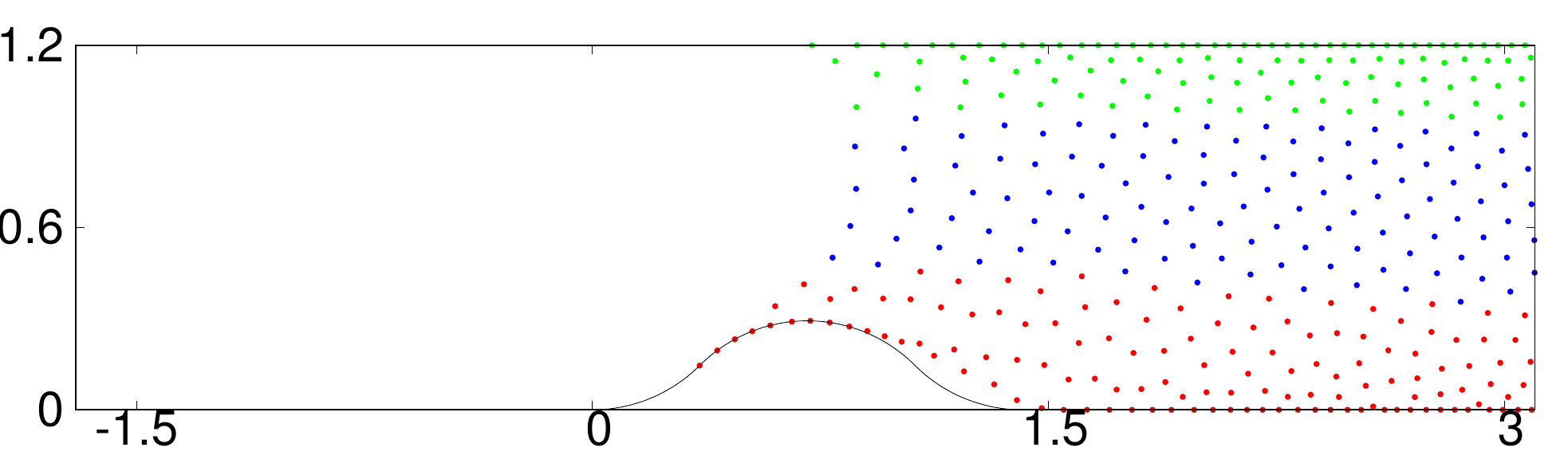}}
\caption{Channel with a bump. Other than changing the domain, the figures are constructed in the same was as in Figure \ref{case1}.}
\label{case2}
\end{figure}

Next we insert a large obstacle in the channel with a shape similar to a horse shoe. Figure \ref{case3} illustrates the dynamics. The detachment of the particles from the boundary of the horse shoe is clearly visible. We also observe that several particles get stuck in the left lower region around the obstacle. Also, the total flow of the particles through the channel per unit time is significantly less than the previous two cases. From the color coding we see that it is not so clear in which order the particles pass the obstacle. Also, after having passed the obstacle, the separation line between the red and blue particles becomes less clear, which indicates a small mixing effect.

\begin{figure}[htbp]
\centering
\subfloat{\includegraphics[width=0.48\textwidth]{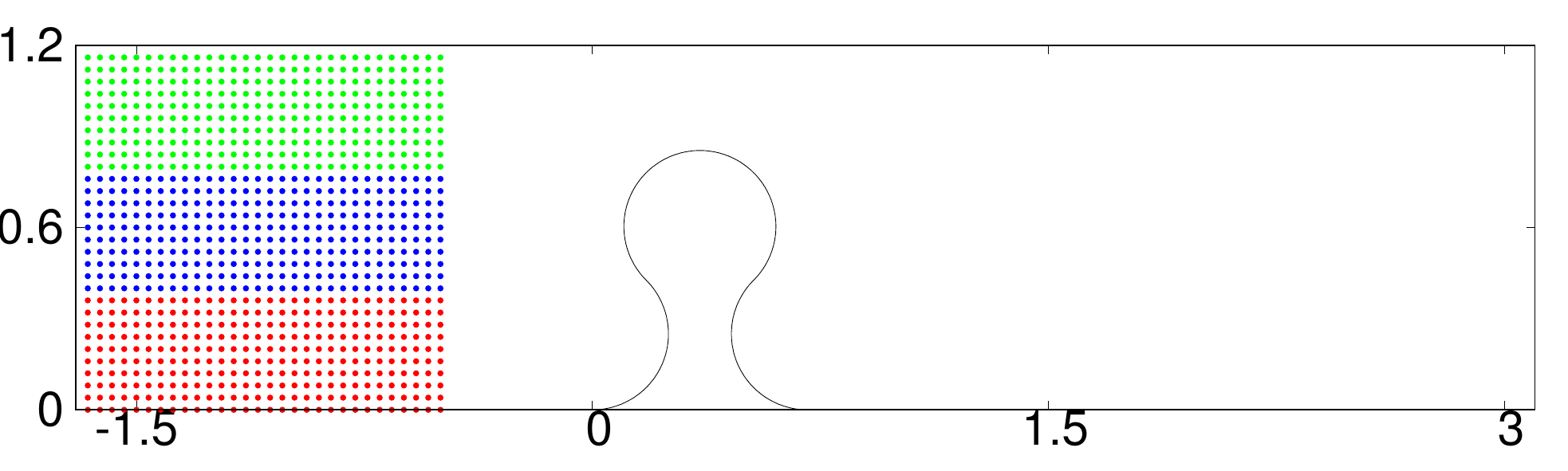}}
\subfloat{\includegraphics[width=0.48\textwidth]{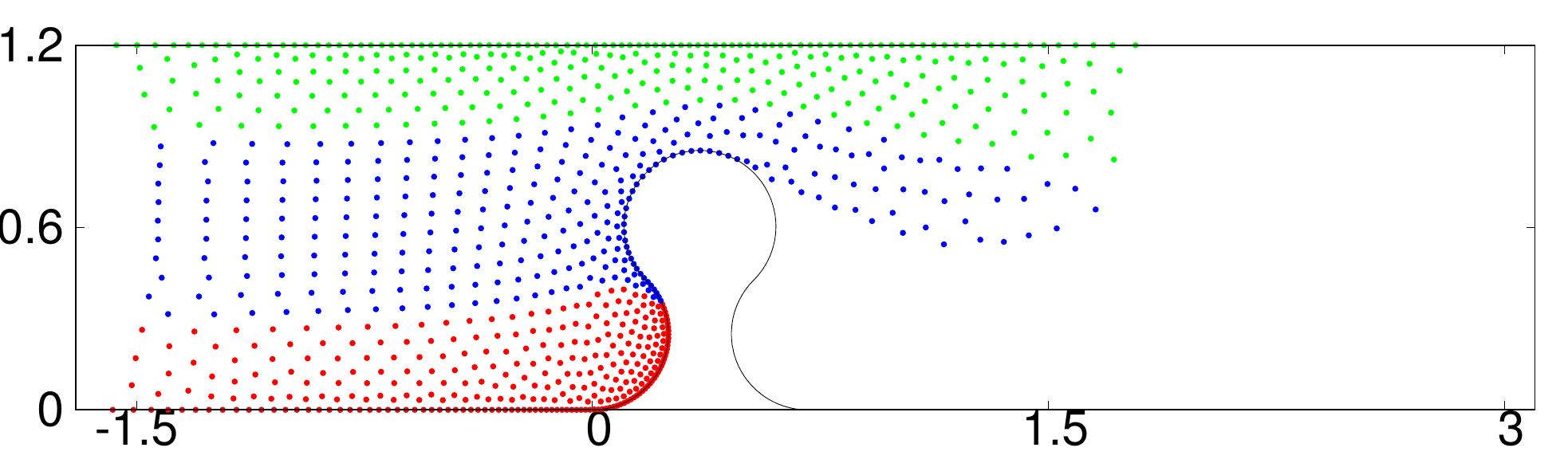}}\\
\subfloat{\includegraphics[width=0.48\textwidth]{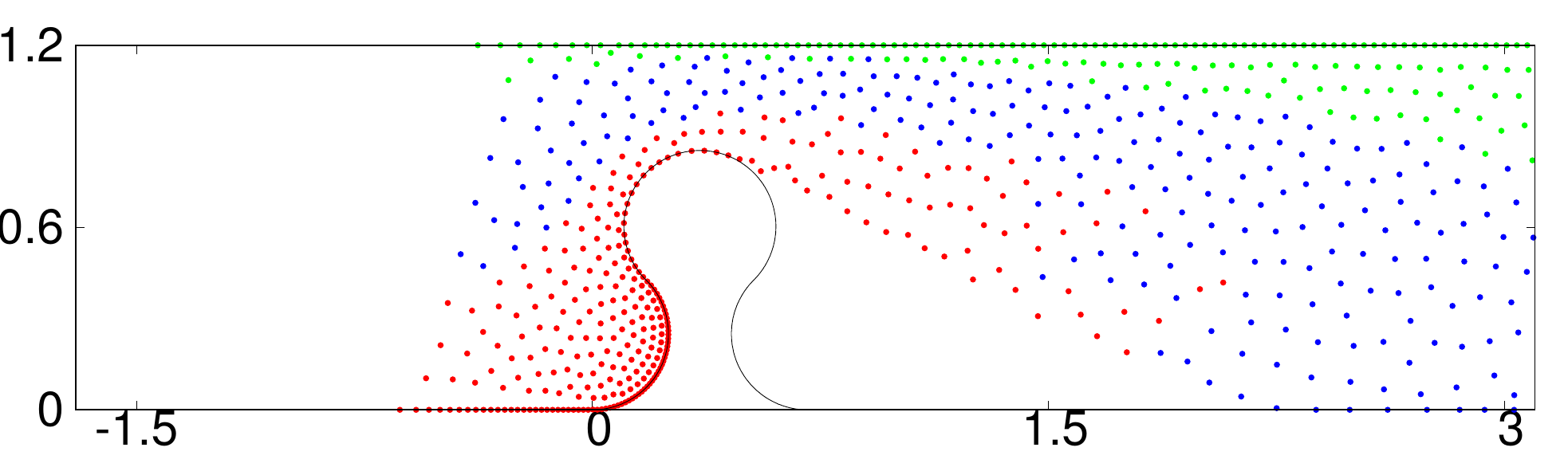}}
\subfloat{\includegraphics[width=0.48\textwidth]{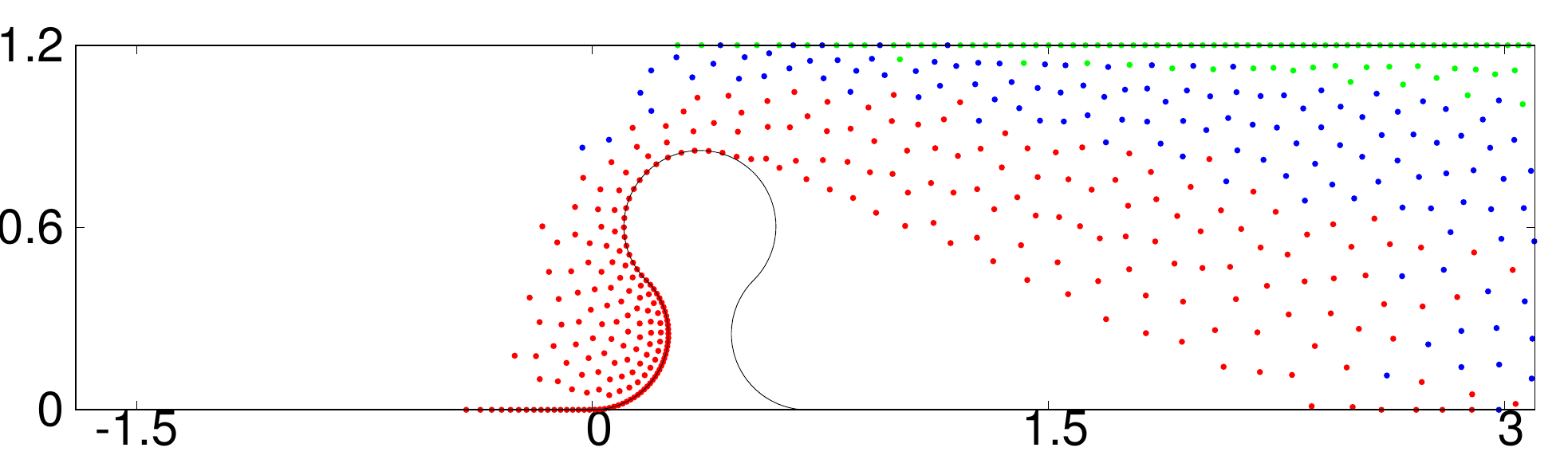}}
\caption{Channel with a horse-shoe shaped obstacle. Other than changing the domain, the figures are constructed in the same was as in Figure \ref{case1}.}
\label{case3}
\end{figure}

Figure \ref{energy} compares the energy decay of the dynamics between all three channels. While the small obstacle has little influence on the decay of the energy, the large obstacle clearly slows down the decay until most particles have passed the obstacle.
\begin{figure}[htbp] 
      \begin{center}
        \includegraphics[width=90mm]{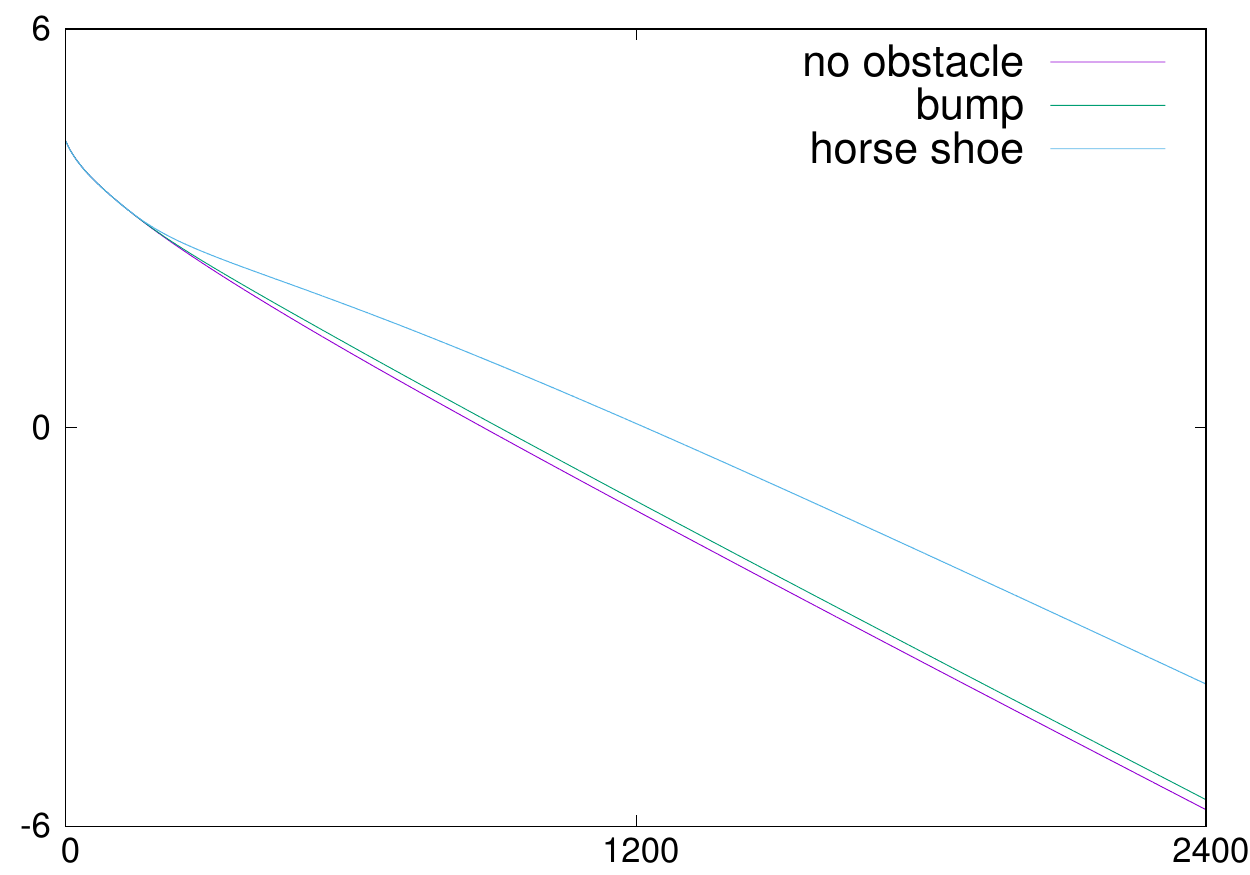}
      \end{center}
      \caption{Energy decay related to the particles dynamics in the three channels considered in Section \ref{sec43}. }
      \label{energy}
\end{figure}

\section{Conclusion and outlook}\label{sec5}
\setcounter{equation}{0}
In Theorem~\ref{theorem1} we proved existence and uniqueness of mild solutions to interacting particle systems when the particles are confined to a given domain (Problem \ref{problem1}). We also showed that when particles detach from the boundary, they do so with velocity tangential to the boundary (Proposition \ref{property}). We applied the results to gradient flows with confinement (see Theorem~\ref{Th3}). We have illustrated the dynamics by numerical simulations for various (non-convex) domains with a special focus on particles attaching to and detaching from the boundary.

The numerical illustrations spark several topics for future research. For instance, they suggest that the collective motion of the particles could be described by a continuum density. In future work we intend to obtain this density from Theorem \ref{theorem1} by passing to the many-particle limit $n \to \infty$ in Problem \ref{problem1}. Another example is that the coloring of the particles suggests that no mixing occurs (at least for convex domains); it would be interesting to formulate and prove a precise statement on the mixing of particles. Lastly, the group behavior of the particles shows a moving front in time for the support of the particle density; it would be interesting to describe this front with more precision.

\section*{Acknowledgments}\label{sec6}
\setcounter{equation}{0}
This work is partially supported by JSPS KAKENHI Grant Numbers JP16H02155 and JP17H02857.


\newcommand{\etalchar}[1]{$^{#1}$}

\end{document}